\newcommand{\DiagS}{\begin{tikzpicture}[baseline = .3cm]
\draw (0,-1/3)--(0,0);
\draw (0,0)--(1,1);
\draw (0,0)--(-1,1);
\draw (1/3,1/3)--(-1/3,1);
\draw (2/3,2/3)--(1/3,1);
\end{tikzpicture}
}
\newcommand{\DiagT}{\begin{tikzpicture}[baseline = .3cm]
\draw (0,-1/3)--(0,0);
\draw (0,0)--(1,1);
\draw (0,0)--(-1,1);
\draw (-1/3,1/3)--(1/3,1);
\draw (-2/3,2/3)--(-1/3,1);
\end{tikzpicture}
}
\newcommand{\Diagetwo}{\begin{tikzpicture}[baseline = .3cm]
\draw (0,0)--(0,1.25);
\draw (0,.25)--(.5,.75);
\draw (0,.25)--(-1,1.25);
\draw (-1,1.25)--(-1,1.5);
\draw (-.5,.75)--(0,1.25);
\draw (.5,.75)--(0,1.25);
\draw (-.5,.75)--(-.5,1.25);
\draw (0,1.25) arc (0 : 180 : .25) ;
\node at (-.25,.25) {\tiny{$R$}};
\node at (-.75,.75) {\tiny{$R$}};
\node at (.25,1.35) {\tiny{$R^*$}};
\end{tikzpicture}
}
\newcommand{\Diagethree}{\begin{tikzpicture}[baseline = .3cm]
\draw (0,0)--(0,1.25);
\draw (0,.25)--(.5,.75);
\draw (0,.25)--(-1.5,1.75);
\draw (-1.5,1.75)--(-1.5,2);
\draw (-1,1.25)--(-.5,1.75);
\draw (.5,.75)--(-.5,1.75);
\draw (-.5,.75)--(0,1.25);
\draw (-.5,.75)--(-.5,1.75);
\draw (-1,1.25)--(-1,1.75);
\draw (-.5,1.75) arc (0 : 180 : .25) ;
\node at (-.25,.25) {\tiny{$R$}};
\node at (-.75,.75) {\tiny{$R$}};
\node at (-1.25,1.25) {\tiny{$R$}};
\node at (.25,1.35) {\tiny{$R^*$}};
\node at (-.25,1.85) {\tiny{$R^*$}};
\end{tikzpicture}
}
\def\Aut{\operatorname{Aut}}
\def\Ad{\operatorname{Ad}}
\newcommand{\act}{\curvearrowright}
\newcommand{\bu}{\bullet}
\newcommand{\CC}[0]{\mathcal{C}}
\DeclareMathOperator{\Comm}{Comm}
\newcommand{\C}{\mathbf C}
\newcommand{\cF}{\mathcal F}
\newcommand{\Ga}{\Gamma}
\newcommand{\ga}{\gamma}
\newcommand{\fH}{\mathfrak H}
\newcommand{\scrH}{\mathscr H}
\DeclareMathOperator{\Hilb}{Hilb}
\def\id{\operatorname{id}}
\newcommand{\scrK}{\mathscr K}
\newcommand{\la}{\lambda}
\newcommand{\La}{\Lambda}
\def\Loop{\operatorname{loop}}
\newcommand{\N}{\mathbf N}
\newcommand{\cP}{\mathcal P}\newcommand{\CP}[0]{\mathcal{P}}
\newcommand{\R}{\mathbf R}
\DeclareMathOperator{\Rec}{Rec}
\newcommand{\fT}{\mathfrak T}
\DeclareMathOperator{\target}{target}
\DeclareMathOperator{\TLJ}{TLJ}
\newcommand{\cU}{\mathcal U}
\newcommand{\varep}{\varepsilon}
\newcommand{\Z}{\mathbf Z}
\newcommand{\PA}{planar algebra}
\newcommand{\SPA}{subfactor planar algebra}
\begin{document}

\newtheorem{theorem}{Theorem}[section]
\newtheorem{proposition}[theorem]{Proposition}
\newtheorem{lemma}[theorem]{Lemma}
\newtheorem{definition}[theorem]{Definition}
\newtheorem{problem}[theorem]{Problem}
\newtheorem{corollary}[theorem]{Corollary}
\newtheorem{remark}[theorem]{Remark}
\newtheorem{question}[theorem]{Question}
\newtheorem{conjecture}[theorem]{Conjecture}
\newtheorem{notation}[theorem]{Notation}

\title[
Jones representations of Thompson's group $F$ arising from TLJ algebras]{
Jones representations of Thompson's group $F$ arising from Temperley-Lieb-Jones algebras}

\author{Valeriano Aiello, Arnaud Brothier, \& Roberto Conti}  

\address{
Section de Math\'ematiques
Universit\'e de Gen\`eve
2-4 rue du Li\`evre, Case Postale 64, 1211 Gen\`eve 4, Suisse} 
\email{valerianoaiello@gmail.com}

\address{School of Mathematics and Statistics, University of New South Wales, Sydney NSW 2052, Australia, The Red Centre, East Wing, Room 6107} 
\email{arnaud.brothier@gmail.com\endgraf
\url{https://sites.google.com/site/arnaudbrothier/}}

\address{Dipartimento di Scienze di Base e Applicate per l'Ingegneria, 
Sapienza Universit\`a di Roma, Via A. Scarpa 16, 00161 Roma, Italy}
\email{roberto.conti@sbai.uniroma1.it}

\thanks{Valeriano Aiello acknowledges   support of   
 the Swiss National Science Foundation. Roberto Conti acknowledges partial support by Sapienza Universit\`a di Roma. 
 Arnaud Brothier was supported by European Research Council Advanced Grant 669240 QUEST and is now supported by a University of New South Wales Sydney starting grant.}
 \maketitle
\begin{abstract}
Following a procedure due to V. Jones, using suitably
normalized elements in a Temperley-Lieb-Jones (planar) algebra
we introduce a 3-parametric family of unitary representations of
the Thompson's group $F$ equipped with canonical (vacuum) vectors and study
some of their properties. In particular, we discuss the behaviour
at infinity of their matrix coefficients, thus showing that these
representations do not contain any finite type component.
We then focus on a particular representation known to be quasi-regular and irreducible and show that it is inequivalent to itself once composed with a classical automorphism of F. This allows us to distinguish three 
equivalence classes in our family.
Finally,  we investigate a family of stabilizer subgroups of $F$ indexed by subfactor Jones indices that are described in terms of the chromatic polynomial. In contrast to the first non-trivial index value for which the corresponding subgroup is 
isomorphic to the Brown-Thompson's group $F_3$, we show that when the index is large enough this subgroup is always trivial.
 \vskip 0.9cm
\noindent {\bf MSC 2010}: 22D10, 46L37, 20F65 (Primary), 43A35, 05C31, 57R56, 57M25 (Secondary). 

\smallskip
\noindent {\bf Keywords}: 
Thompson's group, binary tree, category of forests, group of fractions, unitary representation, matrix coefficient, function of positive type, stabilizer, commensurator, automorphism, Temperley-Lieb relations, planar algebra, subfactor, Jones index, Tutte polynomial, chromatic polynomial, Kauffman bracket, TQFT, CFT.
 \end{abstract}


\section*{Introduction and main results}
R. Thompson's group $F$ is one of the most fascinating
countable discrete groups, yet very mysterious for the study of its 
analytical properties has been challenging experts for decades. We refer to \cite{CFP96} for a nice introduction to the basic facts about $F$ and its relatives $T$ and $V$.

It is known that $F$ is inner amenable \cite{Jo97} and it has the Haagerup approximation property \cite{Fa03}, both of which express some weakened form of amenability.
However, somewhat surprisingly, despite the many attempts the question about the amenability of $F$ still remains unanswered, along with exactness (and thus weak amenability) and soficity. 

Several approximation properties for groups are based on suitable asymptotic behaviour of matrix coefficients of unitary representations. For this reason, it is of some interest to determine as much as possible about the representation theory of $F$. Earlier studies of unitary representations of $F$ appear in \cite{Gar12},  \cite{DuMe14} and \cite{Ole16}.
In this work, we follow the general procedure introduced by V.F.R. Jones in  \cite{Jo14,Jo16}  where a large family of unitary representations of $F$ (and $T$) are built 
using Jones' planar algebras or, more generally, a category/functor method.  
Some of these representations admit easy algorithms for computing matrix coefficients leading to new proofs regarding analytical properties of the Thompson's groups \cite{BJ18} but also computation of limits of rotations \cite{BJ18b}. They also provide an explicit connection with the Cuntz algebra in the spirit of the work of Nekrashevych \cite{Ne04}.

In this work we consider only $F$  
and the very first construction of Jones involving a planar algebra. 
We take some step towards an understanding of the main features of a class of such representations depending on three different parameters, one selecting a Temperley-Lieb-Jones (planar) algebra and two more determining a normalized element in that algebra expressed as a linear combination of the identity and a non-trivial TLJ-generator.
More precisely, given a loop parameter $\delta \in  \{2\cos(\pi/n) , \ n=4,5,6,\ldots\} \cup [2,\infty)$
and a normalized 2-box $R$ characterised by two complex parameters $a,b$ satisfying a suitable equation, we obtain a unitary representation $\pi=\pi_{a,b}^\delta$ of $F$.
This representation comes along with a certain vacuum vector $\Omega$ and thus a vacuum state.\\
We prove that for any choice of $\delta$ and non-zero real parameters $a,b$ as above the corresponding representations do not admit any finite-dimensional subrepresentation
and moreover their matrix coefficients do not vanish at infinity.
This result, combined with a theorem of Dudko-Medynets \cite{DuMe14}, implies that these representations do not contain any finite type components (of type I$_n$ and II$_1$).\\
For any loop parameter $\delta$, the Jones-Wenzl idempotent (properly rescaled) gives us a normalized element $R$ of TLJ.
Jones called it the {\it chromatic choice} because the vacuum state 
applied to
$g\in F$ can be expressed in terms of the chromatic polynomial of a certain graph $\Gamma_g$ associated to $g$ 
evaluated at $\delta^2.$
Jones showed that at $\delta=\sqrt 2$  the 
stabilizer subgroup of the vacuum vector is nothing but the set of those elements $g \in F$ for which the graph $\Gamma_g$ is bipartite.
It is a remarkable subgroup $\Vec F<F$ called the Jones subgroup, that turns out to be isomorphic to $F_3$ \cite[Lemma 4.7.]{GS17}.
Moreover, the representation $\pi$ for this choice is unitarily 
equivalent to the quasi-regular representation $\lambda_{F/ \Vec F}$ associated to $\Vec F<F.$ 
In passing, we mention that many other graph and link invariants may be interpreted as vector states associated with unitary representations of both the Thompson's group and the Jones oriented subgroup. We refer, for instance, to \cite{ACJ}, where this is shown for $\vec F$ and   the HOMFLYPT polynomial, and to \cite{AiCo1, AiCo2} where this is done with different and more elementary (but less powerful) methods.
\\
By flipping the two parameters $a$, $b$, we obtain a new representation that is unitarily equivalent to $\lambda_{F/\sigma_F(\Vec F)}$,
where $\sigma_F$ is the automorphism of $F$ associated to the homeomorphism $\sigma(t)=1-t$ of the unit interval $[0,1]$.
We prove that $\Vec F$ and $\sigma_F(\Vec F)$ are not quasi-conjugate.
By Golan-Sapir \cite{GS17}, $\Vec F$ is equal to its own commensurator, implying that the quasi-regular representation $\lambda_{F/\Vec F}$ is irreducible.
This, together with a classical argument going back to Mackey, shows that the quasi-regular representations associated to $\Vec F < F$ and $\sigma_F(\Vec F) < F$ are not unitary equivalent, thus providing two unitarily inequivalent irreducible representations $\pi$ and $\pi \circ \sigma_F$ in our family.
Choosing another representation $\pi'$ with parameters $a$ and $b$ equal to each other, we get that $\pi'$ and $\pi'\circ\sigma_F$ are unitary equivalent, implying that $\pi'$ is unitary equivalent neither to $\pi$ nor to $\pi\circ\sigma_F$, thus ensuring that our family of representations contains at least three distinct   classes.
\\
Then we 
give a closer look at the stabilizer  subgroups $F^\delta<F$ of the vacuum vector for the chromatic choice but for any value of $\delta.$
We prove that for $\delta$ large enough ($\delta>2.16$) or $\delta \in \{\sqrt 3 , 2\}$, the subgroup $F^\delta$ is actually trivial.
Last but not the least, for any choice of $\pi_{a,b}^\delta$ with real and non-zero parameters $a,b$ the vacuum state can be expressed in terms of the Tutte polynomial suggesting that similar arguments could prove that the stabilizer of the vacuum is generically trivial. We have not investigated this direction yet. 

There are some important questions that remain open.
\begin{enumerate}
\item Do we have that in most of the cases considered in Section \ref{sec:stab} the stabilizer subgroup of the vacuum vector is trivial? So far the only case for which the stabilizer is known to be non-trivial appears when $\delta=\sqrt 2.$
\item Is $\pi$ always irreducible? This is only known to be true again when $\delta=\sqrt 2$ and we are in the chromatic choice or when we compose this representation with $\sigma_F.$
\item Does the considered family of unitary representations contains infinitely (perhaps uncountably) many distinct classes? So far, we could only distinguish three mutually inequivalent non-trivial representations.
\end{enumerate}

The content of the paper is as follows.
In Section \ref{sec-Preliminaries}  we collect some preliminaries to be used throughout the whole text.
In Section \ref{sec-First-results} we discuss some useful facts about unitary equivalence 
of the Jones representations.
In Section \ref{sec-TLS-reps}  we introduce the main class of representations studied in this paper and prove a number of results about their matrix coefficients.
In Section \ref{sec-two-reps} we exhibit a pair of  quasi-regular Jones representations that are inequivalent.
In Section \ref{sec:stab} we focus on a natural class of representations related to the chromatic polynomial, and show that for these representations the stabilizer subgroup of the vacuum vector is trivial in many cases.

\section{Preliminaries}\label{sec-Preliminaries}
\subsection{Subfactors and planar algebras}
A subfactor $N\subset M$ is a unital inclusion of type II$_1$ factors.
Its Jones index $[M:N]$ is the Murray-von Neumann dimension of $L^2(M)$ as a left $N$-module where $L^2(M)$ is the Gelfand-Naimark-Segal Hilbert space associated to the unique faithful normal tracial state of $M$ \cite{Jo83}.
The celebrated index rigidity theorem of Jones claims that the range of Jones indices is exactly equal to the following set
$$\{ 4\cos(\pi/n)^2 : n\geq 3\} \cup [4,\infty].$$
The standard invariant of $N\subset M$ where $[M:N]$ is finite is the lattice of relative commutants $M_i'\cap M_j$ for $i\leq j$ and where $M_{-1}=N\subset M_0=M\subset M_1\subset \cdots$ is the Jones tower obtained by iterating the Jones basic construction.
It has been axiomatized by Popa as a $\lambda$-lattice and later on by Jones as a subfactor planar algebra \cite{P95, Jo99}.
Note that Ocneanu gave another axiomatization for finite depth subfactors, i.e.~when the $N$-bimodule tensor category generated by $L^2(M)$ has finitely many equivalence classes of irreducible objects \cite{O89}.

We are interested in subfactor planar algebras that we briefly define.
We refer the reader to \cite{Jo99} for details.
A shaded C*-planar algebra $\cP=(\cP_n^\pm)_{n\geq 0}$ is a collection of finite dimensional C*-algebras $\cP_n^\pm$ on which the operad of shaded planar tangles acts.
We assume that $\cP_0^+$ and $\cP_0^-$ are one-dimensional.
We think of an element of $\cP_n^\pm$ as a box with $2n$ boundary points, $n$ on the top and $n$ on the bottom.
The distinguished interval (the dollar sign) being at the top left corner of the box, that is in a region shaded by $\pm$.
The multiplication is then given by vertical concatenation and thus the unit of $\cP_n^\pm$ is a diagram with n vertical straight lines.
We have a unital inclusion of $\cP_n^\pm$ into $\cP_{n+1}^\pm$ by adding one vertical straight line to the right of a box.
The \PA\ admits two loop parameters $\delta_+,\delta_-$ that are the values of a closed circle in $\cP_0^+$ and $\cP_0^-$ respectively, where $\cP_0^\pm$ is identified with $\C$.
Each $\cP_n^\pm$ admits two tracial states $\tau_l,\tau_r$ that are the left and right traces.
The value $\tau_l(x)$ (resp. $\tau_r(x)$) is obtained by connecting each string of the bottom to a string of the top on the left (resp. on the right) of the box and dividing by $\delta_\pm^n$ if $x\in\cP_n^\pm$.
If each of them is faithful we say that $\cP$ is non-degenerate and we say that $\cP$ is spherical if $\tau_l=\tau_r$. We then write $\tau:=\tau_l$ and call it the trace of $\cP$.
In that case $\delta_+=\delta_-=: \delta$, that we call the loop parameter.
A \SPA\ is a non-degenerate spherical C*-planar algebra.
Note that the loop parameter $\delta$ of a subfactor planar algebra is the square root of a non-trivial finite Jones index and thus belongs to the set $\{ 2\cos(\pi/n) : n\geq 4\} \cup [2,\infty)$, where we have excluded $1$ and $\infty$.
A \SPA\ is irreducible if $\cP_1^+$ and $\cP_1^-$ are both one-dimensional.
Recall that for any $\delta$ in the above set there is a unique minimal \SPA\ with loop parameter $\delta$ that is called the Temperley-Lieb-Jones \PA\ and that we denote by $\TLJ(\delta)$ or simply $\TLJ$ if the context is clear \cite{TL71,Jo83}.
A spanning set of $\TLJ_n^\pm$ is given by all planar diagrams of non-crossing curves in a box with $n$ boundary points on the bottom and $n$ on the top.
The antilinear involution $*$ applied to such diagram is then the vertical symmetry.
Note that $\TLJ$ is an irreducible \SPA . 

We briefly define the rectangular category of a \SPA.
We refer the reader to \cite{Jo14} for details and precise definitions.
Let $\cP$ be a \SPA\ and consider its rectangular category $\Rec(\cP)$ whose collection of objects is $\N=\{1,2,\cdots\}$.
The space of morphism from $n$ to $m$ is empty if $n+m$ is odd and otherwise is equal to a copy of $\cP_{(n+m)/{2}}^+$. 
We think of an element of $\Rec(\cP)(n,m)$ as an element of $\cP_{(n+m)/{2}}^+$  that we represent as a box with $n$ points on the bottom and $m$ on the top and where the distinguished interval is placed on the top left corner. 
The composition of morphism is then obtained by concatenating vertically such diagrams.
Note that $\Rec(\cP)(n,n)$ can be canonically identified with $\cP_n^+$ as an algebra.
We equip the category $\Rec(\cP)$ with a contravariant endofunctor $\dag:\Rec(\cP)\to\Rec(\cP)$ such that $\dag(n)=n$ and $x^\dag$, with $x\in\Rec(\cP)(n,m)$ identified with an element of $\cP^+_{(n+m)/2}$, is obtained by considering the element $x^*\in \cP^+_{(n+m)/2}$ and identifying it with a box-diagram with $m$ boundary points on the bottom and $n$ on the top and where the distinguished interval is still on the top left corner.
Therefore, $x^\dag\in \Rec(\cP)(m,n)$ and we have that $(x^\dag)^\dag=x$.
This provides a sesquilinear form on $\Rec(\cP)(n,m)$ given by $\langle x, y \rangle:= \tau(y^\dag \circ x)$ where $\tau$ is the trace on $\cP_n^+.$
Since $\CP$ is non-degenerate we have that this form is an inner product and then it gives a structure of Hilbert space for $\Rec(\cP)(n,m)$ since it is complete by finite dimensionality.

\subsection{Jones representations of Thompson's group $F$}\label{sec:JonesRep}

\subsubsection{Thompson's group $F$.}
We recall some basic facts about  Thompson's group $F$. We refer the reader to \cite{CFP96} for details.
The group $F$ is defined as the set of orientation preserving homeomorphisms of the closed unit interval $[0,1]$ that are piecewise linear with finitely many breakpoints at dyadic rationals and slopes in $2^{\Z}$. In this paper, $F$ is endowed with the discrete topology. It is known that $F$ is an ICC (infinite conjugacy classes) countable group. 
The quotient of $F$ by its commutator subgroup $[F,F]$ is isomorphic to ${\Z}^2$. Moreover, any proper quotient of $F$ is abelian.
It also known that an irreducible finite dimensional representation of $F$ must be necessarily of dimension one \cite{DuMe14}.
$F$ is finitely presented, as it can be described by $\langle A, B \ | \  [AB^{-1},A^{-1}BA], [AB^{-1},A^{-2}BA^2] \rangle$.
However, we often consider the equally well-known infinite presentation given by
$$
F=\langle x_0, x_1, \ldots \; | \; x_n x_k=x_k x_{n+1} \textrm{ for } k<n\rangle
$$
The (left) shift $\phi
: F\to F$ is the homomorphism of $F$ defined by $$\phi(x_i)=x_{i+1}, \quad i \in \N_0=\{ 0, 1, 2, 3, \ldots \} \ . $$

\subsubsection{Thompson's group $F$ as a group of fractions.}
The elements of $F$ admit a nice graphical description. 
Indeed, any element of $F$ can be described as an equivalence class of pairs of rooted planar binary trees with the same number of leaves.  .
For instance, the standard generator $x_0$ is represented by a pair of trees with three leaves.

We introduce this diagrammatic approach from a categorical point of view, as described in \cite{Jo16}.
See also \cite[Section 7.2]{Belk04}.
A  \emph{binary forest} with $n$ roots and $m$ leaves is an isotopy class of planar diagrams in the strip ${\R} \times [0,1]$, the roots and the leaves being respectively $n$ distinct points in ${\R} \times \{0\}$ and $m$ distinct points in ${\R} \times \{1\}$ joined by straight lines possibly bifurcating from the bottom to the top. We number the roots and
the leaves from left to right. 
A \emph{tree} is a binary forest with only one root. 
The composition $f \circ g$ (or simply $fg$) of two binary forests $f$ and $g$ is defined when the number of leaves of $g$ equals the number of roots of $f$. This is then the binary forest obtained by stacking vertically $f$ on the top of $g$ lining up the leaves of $g$ with the roots of $f$, followed by (vertical) rescaling. In this way, we obtain a category $\cF\equiv \cF_2$ with objects the natural numbers and morphisms between $n$ and $m$  the set ${\cF}(n,m)$ of binary forests with $n$ roots and $m$ leaves. Similarly, by using $k$-ary forests, one gets the category $\cF_k$.

It is useful to introduce a special notation for some selected trees: $\vert$ is the tree with only one leaf, while $Y$ denotes the tree with two leaves. We also denote by $\bullet$ the operation of horizontal concatenation of forests, so that if $f \in \cF(n,m)$ and $g \in \cF(n',m')$ then $f \bullet g \in \cF(n+n',m+m')$ ($f$ on the left of $g$), see the example below.
$$
\includegraphics[scale=0.25]{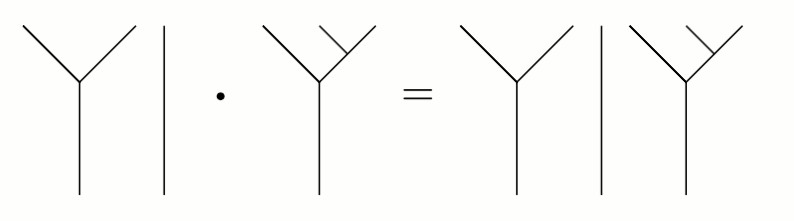}
$$

Note that the set of trees, denoted by $\fT$, is a directed set where $s\leq t$ if and only if there exists a forest $f$ such that $f\circ s=t$.

Consider the set of pairs of trees $(t,s)$, where $t$ and $s$ have the same number of leaves, that we mod out by the relation generated by $(t,s) \sim (f \circ t, f \circ s)$ for any composable forest $f$. 
We denote by $\frac{t}{s}$ or $(t,s)$ the class of the pair $(t,s)$. 
We define a multiplication on this quotient set by the formula 
$$\frac{t}{s} \cdot \frac{t'}{s'} = \frac{pt}{qs'}$$
where $ps = qt'$.
It gives a group structure such that the inverse of $\frac{t}{s}$ is $\frac{s}{t}$ and the neutral element is $\frac{t}{t}$ for any tree $t$.
In turn, the group thus obtained is called the \emph{group of fractions} of the category $\cF$ and it is isomorphic to $F$.
Graphically we represent an element $\frac{t}{s}$ by first drawing the tree $t$ upside down and then $s$ matching up the leaves of the two trees.
For instance the generators $x_0$ and $x_1$ 
are depicted below
$$
x_0=  			\begin{array}[c]{l}\includegraphics[scale= .15]{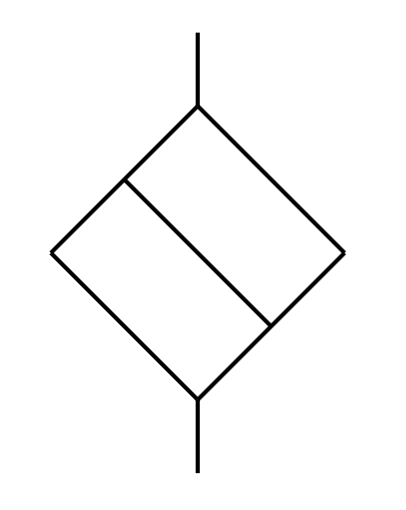}\end{array}\; ,\qquad x_1=  			\begin{array}[c]{l}\includegraphics[scale= .15]{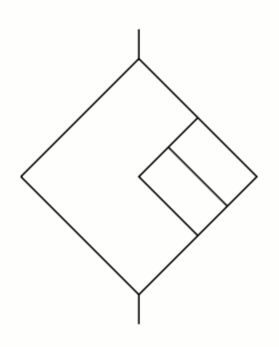}\end{array}\; . 
$$

In this diagrammatic approach, the shift $\phi$ is described by $$\phi\left(\frac{t}{s}\right) = \frac{(\vert \bullet t)\circ Y}{(\vert \bullet s)\circ Y} \ . $$
With this notation, for instance, $x_0=\frac{(Y \bullet \vert) \circ Y}{(\vert \bullet Y) \circ Y}$.

\subsubsection{Jones representations of Thomspon's group $F$.}
In \cite{Jo14}, Jones constructed a large class of unitary representations for $F$ and also for Thompson's group $T$.
Given a triple $(\cP,M,R)$ where $\CP$ is a \SPA\ (or even any non-degenerate C*-planar algebra), $M$ a module over the rectangular category of $\cP$, and $R$ a certain normalized element of $\cP$, we can construct a representation of $F$.
It $M$ is a module over the affine category of $\cP$, then we obtain a representation of the larger group $T$.
Jones generalized this construction in a very beautiful way in \cite{Jo16}.
Given a well behaved category $\CC$ he constructed a group of fraction $G_\CC$.
Then any functor $\Phi:\CC\to \Hilb$ with target the category of Hilbert spaces with isometries for morphisms provides a unitary representation of the group of fractions $G_\CC$ associated to $\CC$ \cite{Jo16}.

We are interested in representations of $F$ given by a triple $(\cP,M,R)$ where $\cP$ is a \SPA\ (most of time the $\TLJ$-\PA) and $M$ is the regular module over $\Rec(\cP)$.
We recall the construction of \cite{Jo14} in this particular case.
Fix a \SPA\ $\cP$ and $R\in \cP_2^+$.
We say that $R$ is normalized if we have the following identity
$$
\includegraphics[scale=0.125]{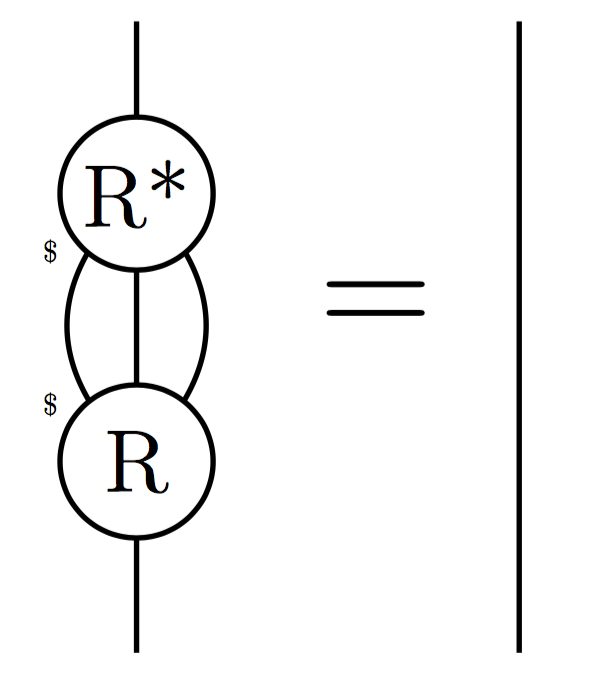}
$$
Consider the morphism
$$
\includegraphics[scale=0.125]{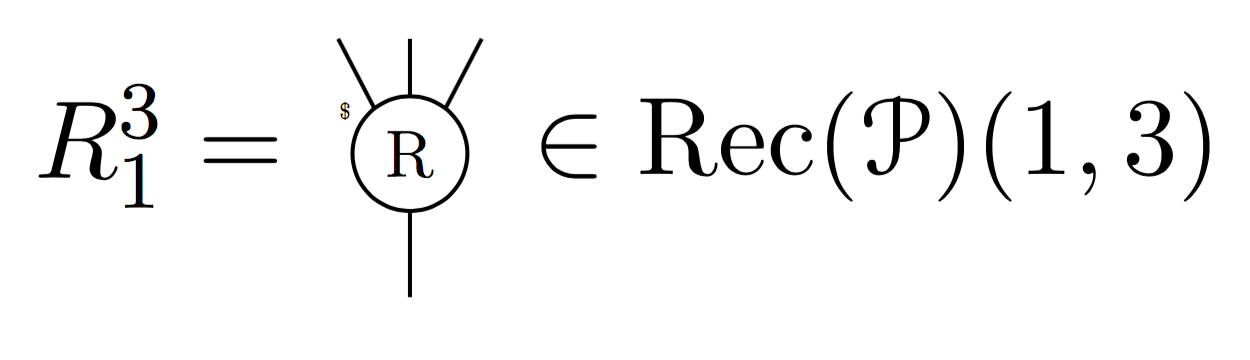}
$$ 
that we simply denote by $R$ if the context is clear.
Consider a binary tree $t$.
Let $\tilde t$ be the unique ternary tree which is obtained from $t$ by  replacing any binary branching by a ternary branching where the additional branch go straight to the top in the middle.
We then replace any branching of $\tilde t$ by an instance of $R_1^3$ which gives us an element $\Phi(t)\in \Rec(\cP)(1,\tilde m)$ where $\tilde m:=2m-1$.
For example, 
$$
\includegraphics[scale=0.2]{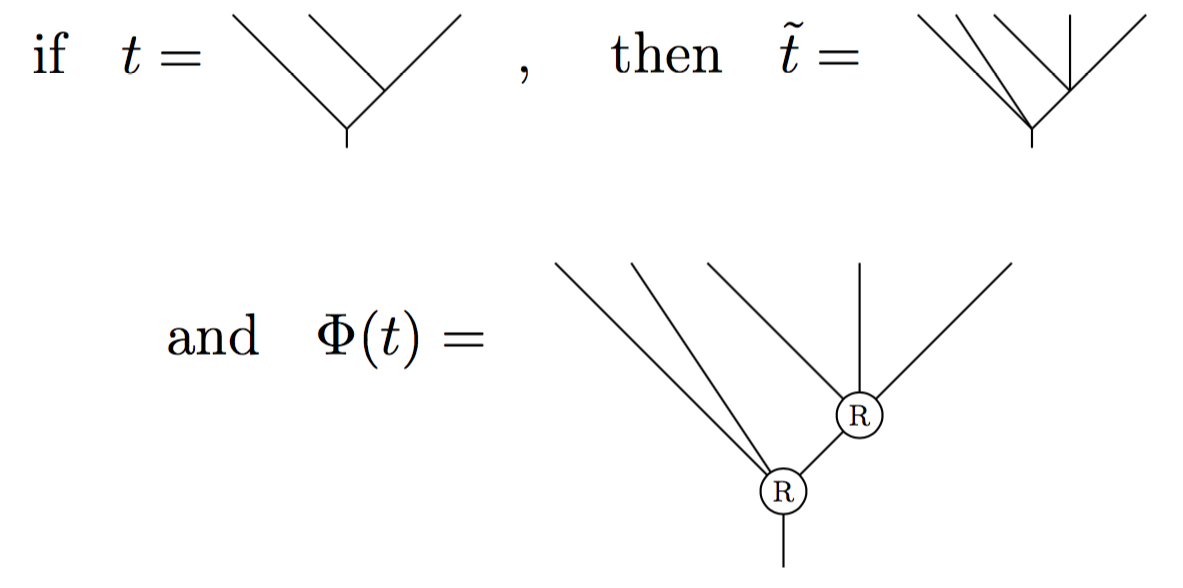}
$$ 
If $f$ is a binary forest with trees $t_1,\cdots,t_n$, then we construct the ternary forest $\tilde f = \tilde t_1 \bu \vert \bu \tilde t_2 \bu \vert \bu \cdots \vert\bu \tilde t_n$ where a trivial tree is added between $\tilde t_k$ and $\tilde t_{k+1}$ for any $1\leq k\leq n-1$.
Then replace any branching of $\tilde f$ by an instance of $R_1^3$.
If $f\in \cF(n,m)$, we obtain a morphism $\Phi(f)\in \Rec(\cP)(\tilde n , \tilde m)$ where $\tilde n:=2n-1$.
For any $n\geq 1$ let $\Phi(n)\subset \Rec(\cP)(1,n)$ be the span of $\Phi(t)$ where $t$ runs over all tree with $n$ leaves.
Given a tree $t$ with $n$ leaves we put $\fH_t:=\{(t,\xi) : \xi \in \Phi(n)\}$ that is a copy of $\Phi(n)$ indexed by $t$ that we equip with the restriction of the inner product of $\Rec(\cP)(1,n)$.
Note that when $\cP$ is irreducible we have that the inner product of $ \Rec(\cP)(1,n)$ is given by $\langle x, y \rangle := y^\dag\circ x$ where $\Rec(\cP)(1,1)$ is identified with $\C$.
Define the quotient space 
$$X:=\{ (t,\xi) : t \text{ a tree } , \xi\in \Phi(\target(t))\}/\sim$$
where $\sim$ is the equivalence relation generated by $(t,\xi)\sim (ft, \Phi(f)\circ \xi)$.
This quotient space is nothing but  the inductive limit of the directed system $(\fH_t)_{t\in\fT}$ with inclusion maps $\iota_t^{ft}:\fH_t\to \fH_{ft}, (t,\xi)\mapsto (ft, \Phi(f)\circ\xi)$.
Note that the map $\iota_t^{ft}$ is an isometry and thus we obtain an inner product on $X$ that makes it a pre-Hilbert space.
Let $\scrH$ be its completion and identify $\fH_t$ with a subspace of $\scrH$  for any tree $t$.  
We denote by $\frac{t}{\xi}$ or even $(t,\xi)$ the equivalence class of $(t,\xi)\in \fH_t$ inside $\scrH$.
We can now introduce the Jones representation 
$\pi:F\to \cU(\scrH)$ as the unitary representation densely defined by the formula
$$\pi\left(\frac{t}{s}\right) \frac{v}{\xi} := \frac{pt}{\Phi(q)\xi} \text{ where } p\circ s = q\circ v.$$
In particular, $\pi\left(\frac{t}{s}\right)\frac{s}{\xi} = \frac{t}{\xi}$.
We write $\Omega$  for the unit vector $(\vert,1)$ belonging to $\fH_\vert\subset\scrH$.
By construction, $\Omega$ is a cyclic vector inside $\scrH$.
To emphasize the role of $\cP$ and $R$ we will sometimes add the subscript $\cP, R$ or simply $R$ to $\Phi,\pi,\scrH,$ and $\Omega$.
Note that we slightly modified the definition of the representation of $\pi$ with respect to \cite{Jo14}.
Indeed,   
therein the space $\scrH$ is the completion of $\{(t,\xi) : t\in\fT, \xi\in\Rec(\cP)(1,\target(t))\}/\sim$ and thus $\Omega$ is not necessarily cyclic.
The reason is that we are mostly interested in the cyclic representation generated by $\Omega$.
We denote by $\varphi(\cdot)=\langle\pi(\cdot) \Omega , \Omega\rangle$ the vector state given by $\Omega$ and notice the following equality:
\begin{equation}
\varphi\left(\frac{t}{s}\right) = \langle \Phi(s) , \Phi(t)\rangle \text{ for any element } \frac{t}{s}\in F.
\end{equation}

We can interpret the representation $\pi$ in the setting of \cite{Jo16} where $\Phi$ defined a functor from the category of forests $\cF$ to the category of Hilbert spaces with isometries as morphisms. 
This functor provides a unitary representation $(\pi_\Phi,\scrH_\Phi)$ 
which 
is unitarily equivalent to the representation $(\pi,\scrH)$ described above.
 
As an illustration we provide an explicit computation of some values of $\varphi$ for a specific choice of $(\cP,R).$
Those values will be used in Section \ref{sec:TLJ}.

\begin{proposition}\label{phix0x1}
Consider a subfactor planar algebra $\cP$ with loop parameter $\delta\in\{2\cos(\pi/n) : n \geq 4\} \cup[2,\infty)$ and two complex numbers $a,b$ satisfying the equation $1= \delta(|a|^2 + |b|^2) + \bar a b + a\bar b.$
Put $R_1 = \vert \, \vert , \quad R_2= {}^\cup_\cap$, and $R:=aR_1+ bR_2$.
Then, $R$ is a normalized element of $\cP$ that defines a functor $\Phi$ and a unitary representation $(\pi,\scrH)$ with vacuum vector $\Omega$ and associated vector state $\varphi.$

Furthermore, evaluating $\varphi$ on $x_0, x_0 x_1, x_1 x_0 \in F$
we find the following equalities:
\begin{align*}
\varphi(x_0) & = 1 + | ab |^2 (1-\delta^2) \\
\varphi(x_1x_0) & = \varphi(x_0)^2 \\
\varphi (x_0x_1 ) & = \varphi(x_1x_0) + | a^2 b |^2 (\delta^2 - 1)^2 \left( \frac{\delta}{\delta^2 - 1} - |b|^2 \right).
\end{align*}
\end{proposition}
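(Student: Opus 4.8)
The whole proposition follows from the single formula $\varphi(t/s)=\langle\Phi(s),\Phi(t)\rangle=\Phi(t)^\dag\circ\Phi(s)$, which exhibits $\varphi$ on any element of $F$ as the scalar obtained by closing up a planar diagram built out of copies of $R$ (coming from $\Phi(s)$) and of $R^*$ (coming from $\Phi(t)^\dag$). Since $R=aR_1+bR_2$ and, as a $1\to 3$ tangle, $R_1$ joins the incoming strand to the \emph{leftmost} of the three outgoing strands and caps off the other two, while $R_2$ joins it to the \emph{rightmost} and caps off the other two, expanding every occurrence of $R$ and of $R^*$ turns the closed diagram into a finite sum indexed by ``states'': a choice of $R_1$ or $R_2$ at each trivalent vertex of the underlying tree–pair diagram. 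Each state contributes a monomial in $a,b,\bar a,\bar b$ (one letter per vertex) times $\delta^{\ell}$, where $\ell$ is the number of closed loops of the resulting system of non–crossing chords. I would first dispose of the normalization claim: expanding the defining identity for a normalized $R$, the four terms give $\delta|a|^2$, $\bar a b$, $a\bar b$, $\delta|b|^2$, and the requirement that they sum to $1$ is precisely $1=\delta(|a|^2+|b|^2)+\bar a b+a\bar b$.

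\textbf{Computing $\varphi(x_0)$.} Use the minimal representative $x_0=(Y\bullet\vert)\circ Y\big/(\vert\bullet Y)\circ Y$ recorded in the text. Then $\Phi(t_0),\Phi(s_0)\in\Rec(\cP)(1,5)$ are each a sum of $2^2=4$ non–crossing pairings of one root point with five leaf points, so closing $\Phi(t_0)^\dag$ against $\Phi(s_0)$ produces $4\times 4=16$ terms. Carrying out the loop count state by state and collecting gives
\[
\varphi(x_0)=\delta^2\bigl(|a|^4+|ab|^2+|b|^4\bigr)+2\delta\bigl(|a|^2+|b|^2\bigr)(a\bar b+\bar a b)+(a\bar b+\bar a b)^2+|ab|^2 .
\]
Writing $P=|a|^2+|b|^2$, $c=a\bar b+\bar a b$, substituting $\delta P=1-c$ from the normalization and $|a|^4+|b|^4=P^2-2|ab|^2$ collapses the right–hand side to $1+|ab|^2(1-\delta^2)$.

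\textbf{Computing $\varphi(x_1x_0)$ and $\varphi(x_0x_1)$.} The same machine applies. Working out the fraction products one gets minimal representatives $x_1x_0=T/S$ with $T,S$ five–leaf trees and $x_0x_1=T'/S'$ with $T',S'$ four–leaf trees, so the relevant state sums have $2^8$ and $2^6$ terms. The structural point that organises the first one is that $x_1x_0=x_0x_2$: the two elementary carets making up $x_1x_0$ act ``far apart'', so that after the trivial simplifications afforded by the normalization relation the closed diagram computing $\varphi(x_1x_0)$ decouples into two independent sub–diagrams each contributing $\varphi(x_0)$; multiplicativity of the state sum over such a decoupling then yields $\varphi(x_1x_0)=\varphi(x_0)^2$. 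For $x_0x_1$ the two carets are adjacent and cannot be fully separated: the diagram differs from the decoupled one by a single surviving chord, and resolving that chord via $R=aR_1+bR_2$ reproduces $\varphi(x_1x_0)$ together with one extra family of states whose total, after using the normalization once more, is $|a^2b|^2(\delta^2-1)^2\bigl(\tfrac{\delta}{\delta^2-1}-|b|^2\bigr)$.

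\textbf{Main obstacle.} The delicate part is the last paragraph: making the claimed decoupling of the $x_1x_0$–diagram precise (exhibiting the isotopy of closed tangles and checking that the normalization moves introduce no stray loop factors $\delta$), and, for $x_0x_1$, correctly isolating and summing the extra family of states — in particular tracking which $R_1/R_2$ choices at the pertinent vertices create an extra closed loop versus an extra cap, which is exactly where the factor $\tfrac{\delta}{\delta^2-1}-|b|^2$ originates. If one prefers to avoid the diagrammatic decoupling argument, the brute–force $2^8$– and $2^6$–term evaluations carry through mechanically; a convenient internal check is that the former must reproduce $\varphi(x_0)^2$, which quickly flags any inconsistent use of the normalization identity.
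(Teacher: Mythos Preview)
Your proposal is correct and follows essentially the same route as the paper. Both proofs compute $\varphi$ by expanding the closed $\Phi(t)^\dag\circ\Phi(s)$ diagram into Temperley--Lieb terms, use the normalization identity to simplify, and exploit the one--dimensionality of $\cP_1^+$ to see that the $\varphi(x_1x_0)$ diagram factors as $\varphi(x_0)$ times a residual diagram which is again $\varphi(x_0)$; your ``decoupling'' via $x_1x_0=x_0x_2$ is exactly this observation. For $\varphi(x_0x_1)$ the paper expands two specific $R^*$-- and $R$--boxes to reduce to $(1-|a|^2\delta)\varphi(x_0)+|a|^2|a\delta+b|^2$ and then simplifies algebraically, whereas you frame the same reduction as ``$\varphi(x_1x_0)$ plus one extra family of states''; the organization differs but the computation is the same, and your explicit check $\delta^2P^2-\delta^2|ab|^2+2\delta Pc+c^2+|ab|^2=(\delta P+c)^2+|ab|^2(1-\delta^2)$ for $\varphi(x_0)$ is a cleaner packaging than the paper's line-by-line expansion.
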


\begin{proof}
According to our setting, we will repeatedly use the following rules:
\begin{align} \label{formula-semplificazioni}
& \begin{array}[c]{l}\includegraphics[scale= .2]{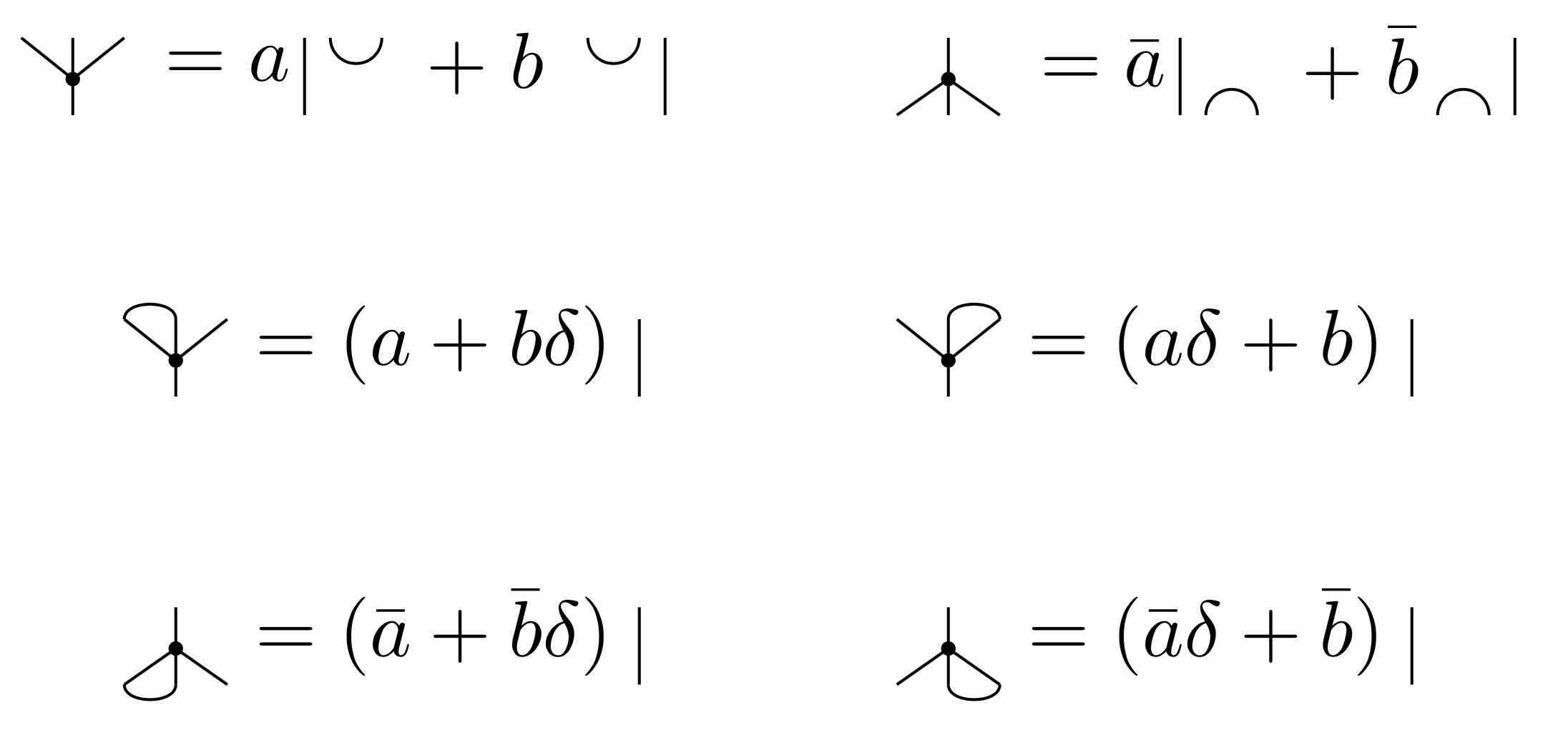}\end{array}
\end{align}
(to ease the graphical description of the elements of the planar algebra we replace the occurrences of $R$  by a small black disk).
It is then easy to check that the normalization condition reads as $1= \delta(|a|^2 + |b|^2) + \bar a b + a\bar b$, which is our assumption.
\\
Since 
$x_0=  			\begin{array}[c]{l}\includegraphics[scale= .1]{fig15}\end{array}$, we compute
\begin{align*}
\varphi(x_0) \; | & = \begin{array}[c]{l}\includegraphics[scale= .25]{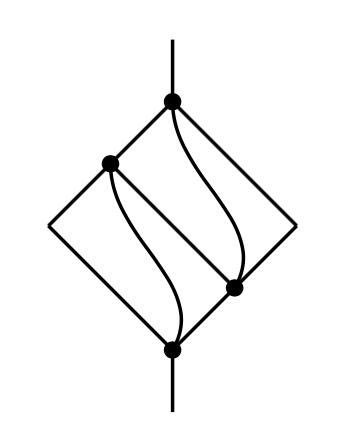}\end{array} \\
& = a\begin{array}[c]{l}\includegraphics[scale= .2]{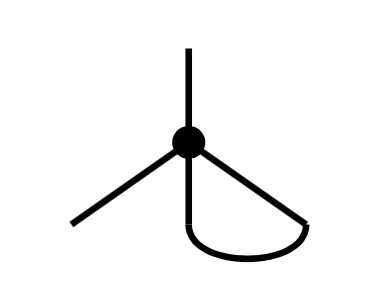}\end{array} +b \left( \bar a \;\begin{array}[c]{l}\includegraphics[scale= .3]{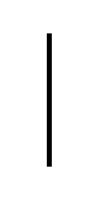}\end{array} +\bar b (	
a+b\delta ) \begin{array}[c]{l}\includegraphics[scale= .2]{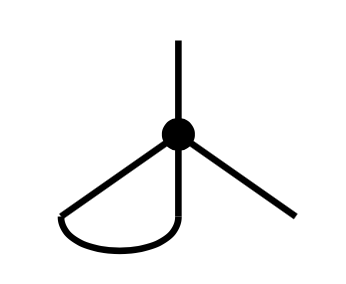}\end{array} 
						\right)\\
& = \left[ |a|^2 \delta + a\bar b +b\bar a + |b|^2	(a+b\delta)(\bar a +\bar b \delta)\right] |\\					
& = \left[  1+ |ab|^2(1-\delta^2)\right]  \; |
\end{align*}
where we used the normalization condition.\\
Now we consider
$$
x_0x_1=  			\begin{array}[c]{l}\includegraphics[scale= .25]{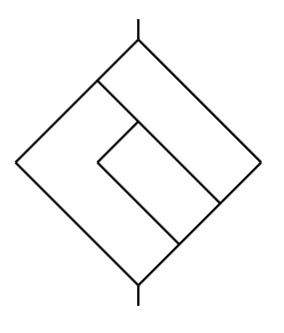}\end{array}\; ,\qquad x_1x_0=  			\begin{array}[c]{l}\includegraphics[scale= .25]{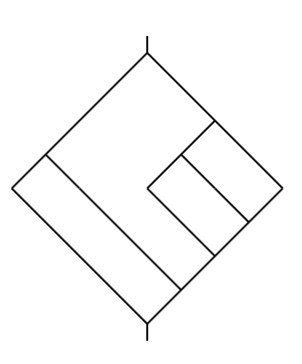}\end{array}\; . 
$$
On the one hand, we have
$$
\varphi(x_1x_0)\; | = \begin{array}[c]{l}\includegraphics[scale= .325]{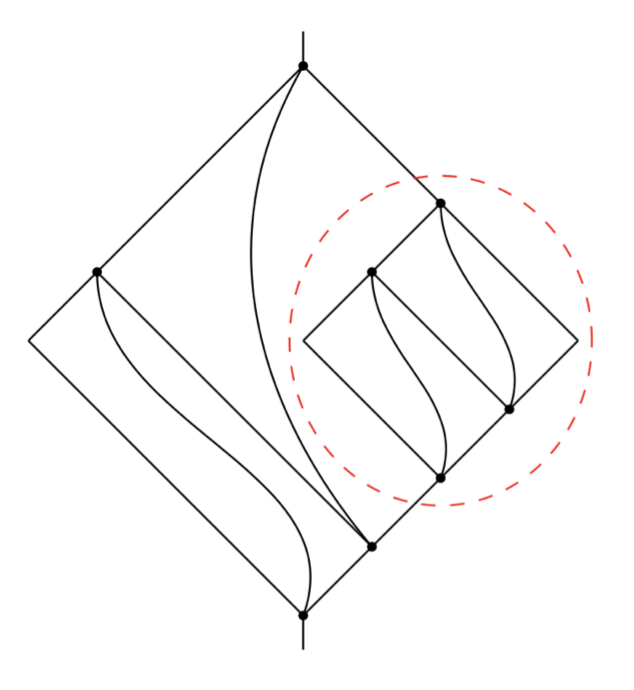}\end{array}	= \varphi(x_0) \begin{array}[c]{l}\includegraphics[scale= .325]{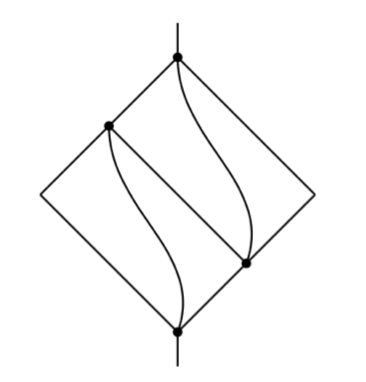}\end{array} 
= (\varphi(x_0))^2 \; | \; ,
$$
on the other hand,
\begin{align*}
\varphi(x_0x_1)\; | & = \begin{array}[c]{l}\includegraphics[scale= .25]{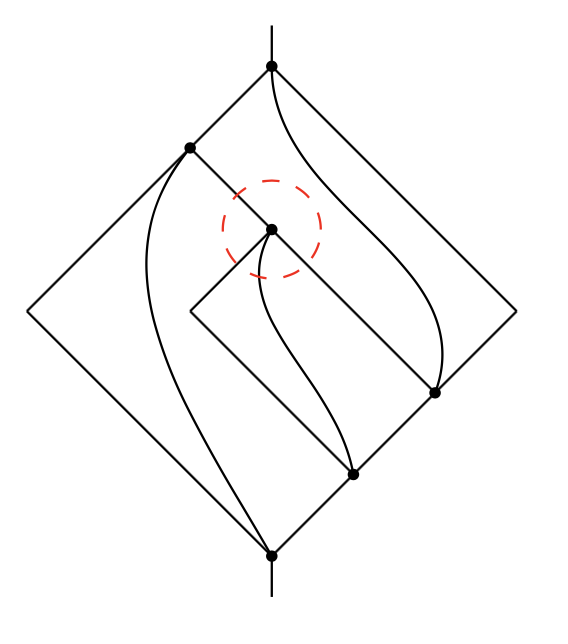}\end{array} = \bar a \begin{array}[c]{l}\includegraphics[scale= .25]{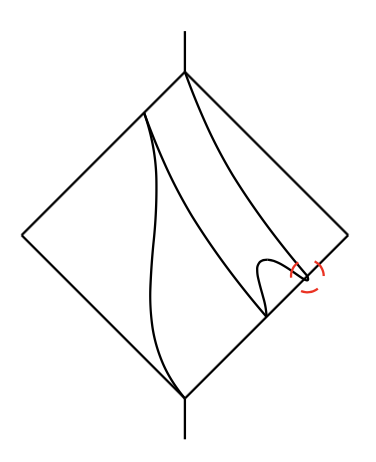}\end{array} +\bar b \begin{array}[c]{l}\includegraphics[scale= .25]{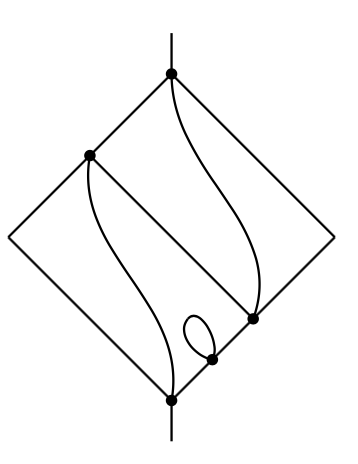}\end{array} \\
& = \bar a \left( a  \begin{array}[c]{l}\includegraphics[scale= .25]{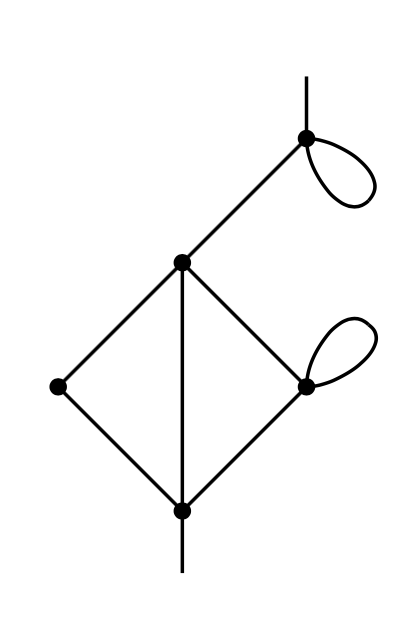}\end{array}  +b  \begin{array}[c]{l}\includegraphics[scale= .25]{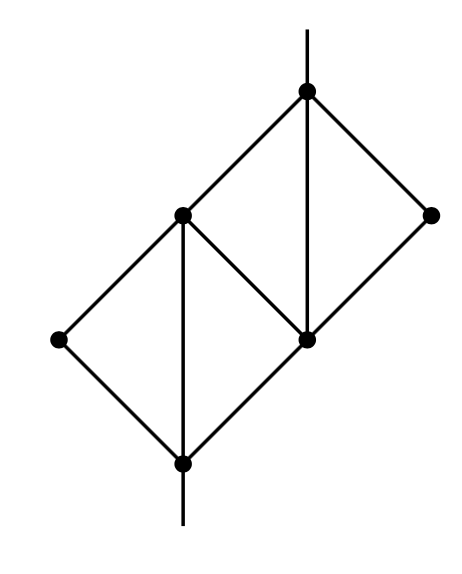}\end{array} \right)
+ \bar b (a+b\delta)  \begin{array}[c]{l}\includegraphics[scale= .25]{fig14}\end{array} \\
& = |a|^2 (a\delta +b) (\bar a \delta+\bar b)+\bar a b  \begin{array}[c]{l}\includegraphics[scale= .25]{fig14}\end{array}  +\bar b (a+b\delta)  \begin{array}[c]{l}\includegraphics[scale= .25]{fig14}\end{array}\\
& = |a|^2 (a\delta +b) (\bar a \delta+\bar b)+(1-|a|^2 \delta)  \begin{array}[c]{l}\includegraphics[scale= .25]{fig14}\end{array}\\
& =  \left[ |a|^2 (a\delta +b) (\bar a \delta+\bar b)+(1-|a|^2 \delta)\varphi(x_0) \right]  \; | \\
& = \left[  |a|^2 |a\delta +b|^2+(1-|a|^2 \delta)(1+|ab|^2(1-\delta^2))\right]  \; |
\end{align*} 
The conclusion now follows by lengthy but straightforward computations, using again the normalization condition.
\end{proof}
 
\section{First results and observations on the Jones representations}\label{sec-First-results}
\subsection{Equivalent representations}
In this section $\cP$ is a subfactor planar algebra and $R$ is a normalized element of $\cP.$
We prove that the representation we get does not change up to unitary equivalence if we multiply $R$ by a phase.

\begin{proposition}\label{phase}
Consider the Jones representation $(\pi_R,\scrH_R)$ associated to a couple $(\cP,R).$
If $z$ is a complex number of modulus one, then $zR$ is still normalized in $\cP$ providing a representation $\pi_{zR}$ that is unitary equivalent to $\pi_R.$
\end{proposition}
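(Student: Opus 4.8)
The plan is to exhibit an \emph{explicit} unitary $U\colon\scrH_R\to\scrH_{zR}$ intertwining the two representations (and, as a bonus, carrying $\Omega_R$ to $\Omega_{zR}$), assembled level by level from the directed system $(\fH_t)_{t\in\fT}$ underlying $\scrH_R$.

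The preliminary bookkeeping is the crux. The morphism $R_1^3$ is linear in $R$, and $\Phi(\cdot)$ is built by substituting one copy of $R_1^3$ at each branching and composing, which is multilinear in these copies; since a binary tree $t$ with $n$ leaves has exactly $n-1$ branchings and a binary forest $f\in\cF(n,m)$ has exactly $m-n$ branchings, we get $\Phi_{zR}(t)=z^{n-1}\Phi_R(t)$ and $\Phi_{zR}(f)=z^{m-n}\Phi_R(f)$. Two consequences follow. First, the defining identity for a normalized element involves a single $R$ together with a single $R^{*}$, and $(zR)^{*}=\bar z R^{*}$, so the scalars cancel ($z\bar z=1$) and $zR$ is again normalized --- the easy half of the statement. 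Second, because $z^{n-1}\neq0$, the subspaces $\Phi_R(n)$ and $\Phi_{zR}(n)$ coincide and carry the same inner product, so $\fH_t^R$ and $\fH_t^{zR}$ are identified as inner product spaces.

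Next, for each tree $t$ with $n$ leaves define $U_t\colon\fH_t^R\to\fH_t^{zR}$ by $U_t(t,\xi)=(t,z^{n-1}\xi)$; this is a surjective isometry since $|z^{n-1}|=1$. The key point is compatibility with the inclusions $\iota_t^{ft}$: tracing $(t,\xi)$ around the resulting square one way gives $\bigl(ft,\,z^{m-n}\Phi_R(f)\circ(z^{n-1}\xi)\bigr)$ and the other way $\bigl(ft,\,z^{m-1}\Phi_R(f)\circ\xi\bigr)$, where $ft$ has $m$ leaves, and these agree since $z^{m-n}z^{n-1}=z^{m-1}$. Equivalently, $U$ respects the equivalence relation $\sim$ defining the pre-Hilbert space $X$, so the maps $U_t$ pass to the inductive limits and yield a surjective isometry $U\colon\scrH_R\to\scrH_{zR}$, i.e.\ a unitary; moreover $U\Omega_R=\Omega_{zR}$ because $\vert$ has one leaf and $z^{0}=1$.

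Finally I would verify $U\pi_R(t/s)=\pi_{zR}(t/s)U$ directly from the formula $\pi(t/s)(v/\xi)=(pt,\Phi(q)\circ\xi)$ with $p\circ s=q\circ v$: writing $k$ for the number of leaves of $v$ and $\ell$ for that of $pt$, the left-hand side gives $(pt,z^{\ell-1}\Phi_R(q)\circ\xi)$ while the right-hand side gives $(pt,z^{\ell-k}\Phi_R(q)\circ(z^{k-1}\xi))$, using $\Phi_{zR}(q)=z^{\ell-k}\Phi_R(q)$, and $z^{\ell-k}z^{k-1}=z^{\ell-1}$. I do not foresee a genuine obstacle: the statement is essentially a bookkeeping lemma, and the only point deserving care is counting the branchings of trees and forests correctly --- equivalently, checking that $U$ descends to the quotient, which is exactly the commuting-square identity above.
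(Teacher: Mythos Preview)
Your proof is correct and follows exactly the same approach as the paper: define $U_t$ as multiplication by $z^{n-1}$ on the $t$-level space (for $t$ a tree with $n$ leaves), check compatibility with the inductive system via $\Phi_{zR}(f)=z^{m-n}\Phi_R(f)$, and conclude that the resulting unitary intertwines the two representations. Your write-up is in fact more detailed than the paper's, which leaves the normalization of $zR$ and the intertwining computation to the reader.
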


\begin{proof}
For any tree $t$ we write $\fH_{R,t}$ and $\fH_{zR,t}$ the $t$-component subspace of $\scrH_R$ and $\scrH_{zR}$ respectively.
Observe that $\Phi_R(n)=\Phi_{zR}(n)$ for any $n\geq 1$ and thus we can identify $\fH_{R,t}$ and $\fH_{zR,t}$ for any tree $t$.
Given a tree $t$ with $n$ leaves we put $U_t:\fH_{R,t}\to \fH_{zR,t}$ that is the multiplication by $z^{n-1}.$
Observe that $\Phi_{zR}(f) \circ U_t = U_{ft} \circ \Phi_R(f)$ for any forest $f$ implying that the family of maps $(U_t)_{t\in\fT}$ defines a map $U:\scrH_R\to \scrH_{zR}$.
It is easy to see that $U$ is a unitary transformation which intertwines $\pi_R$ and $\pi_{zR}.$
\end{proof}

The next proposition points out that we can apply a planar algebra automorphism to $R$ without changing the unitary class of the representation.

\begin{proposition}
Consider a couple $(\cP,R)$ and $\alpha\in\Aut(\cP)$ an automorphism of the planar algebra $\cP$.
Then the representations $\pi_R$ and $\pi_{\alpha(R)}$ are unitarily equivalent.
\end{proposition}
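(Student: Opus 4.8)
The plan is to run the argument of Proposition~\ref{phase} essentially verbatim, with the scalar $z^{n-1}$ there replaced by the action of $\alpha$ on the relevant morphism spaces. First I would record what being an automorphism of the C*-planar algebra $\cP$ provides. Writing $\alpha=(\alpha_n^\pm)$ for the corresponding family of $*$-preserving linear isomorphisms intertwining every shaded planar tangle, we get that $\alpha$ restricts to the identity on the one-dimensional spaces $\cP_0^\pm=\C$, hence fixes the loop parameter $\delta$; that $\alpha$ preserves the trace $\tau$, which is computed by a closure tangle followed by a rescaling by $\delta^n$; and that, since composition in $\Rec(\cP)$ is a planar tangle and $\dag$ is $*$ followed by a relabeling of boundary points, the induced maps $\alpha:\Rec(\cP)(n,m)\to\Rec(\cP)(n,m)$ are multiplicative for composition and unitary for the inner product $\langle x,y\rangle=\tau(y^\dag\circ x)$. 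Finally, the normalization condition is an equation between elements of $\cP$ produced from $R$ by planar tangles, so it is preserved by $\alpha$; thus $\alpha(R)$ is again normalized and $\pi_{\alpha(R)}$ is defined.

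The second step is to observe that $\alpha$ intertwines the two functors, that is, $\alpha(\Phi_R(f))=\Phi_{\alpha(R)}(f)$ for every forest $f$: this is immediate because $\Phi_R(f)$ is assembled by composition from copies of the morphism $R_1^3$ --- itself obtained from $R$ by a planar tangle --- and $\alpha$ commutes with all of these operations while carrying $R$ to $\alpha(R)$, so in particular $\alpha(R_1^3)=\alpha(R)_1^3$. Consequently $\Phi_{\alpha(R)}(n)=\alpha(\Phi_R(n))$ inside $\Rec(\cP)(1,n)$. Now for each tree $t$ with $n$ leaves define $U_t:\fH_{R,t}\to\fH_{\alpha(R),t}$ by $(t,\xi)\mapsto(t,\alpha(\xi))$; this is a surjective isometry by the first paragraph. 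The identity $\alpha(\Phi_R(f)\circ\xi)=\Phi_{\alpha(R)}(f)\circ\alpha(\xi)$ shows that $(U_t)_{t\in\fT}$ is compatible with the connecting isometries $\iota_t^{ft}$, hence descends to the inductive limits and extends to a unitary $U:\scrH_R\to\scrH_{\alpha(R)}$. Finally, with $p\circ s=q\circ v$ one computes $U\bigl(\pi_R(\tfrac ts)\tfrac v\xi\bigr)=\tfrac{pt}{\alpha(\Phi_R(q)\xi)}=\tfrac{pt}{\Phi_{\alpha(R)}(q)\alpha(\xi)}=\pi_{\alpha(R)}(\tfrac ts)\bigl(U\tfrac v\xi\bigr)$, so $U$ intertwines $\pi_R$ and $\pi_{\alpha(R)}$.

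The only genuinely delicate point is the bookkeeping in the first paragraph: one must make sure that every ingredient entering the definition of $\pi_R$ --- the trace, the adjoint $\dag$, composition, the subspaces $\Phi_R(n)$, and the defining formula for $\pi_R$ --- is phrased purely in terms of the planar operad, so that commutation of $\alpha$ with planar tangles propagates through the whole construction. Granting that, the rest is a routine transport of structure, identical in spirit to Proposition~\ref{phase}.
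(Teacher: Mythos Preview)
Your proof is correct and follows exactly the same approach as the paper: define $U_t:(t,\xi)\mapsto(t,\alpha(\xi))$ on each level, check compatibility with the directed system via $\alpha(\Phi_R(f))=\Phi_{\alpha(R)}(f)$, and verify the intertwining relation. The paper's proof is considerably terser, merely asserting the last two steps, whereas you spell out why $\alpha$ preserves the trace, the $\dag$-structure, and the normalization condition --- but the underlying argument is the same transport of structure.
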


\begin{proof}
Note that $S:=\alpha(R)$ is normalized since $\alpha$ commutes with the action of the tangles and thus defines a representation $(\pi_S,\scrH_S).$
Denote by $\fH_{R,t}$ and $\fH_{S,t}$ the $t$-component subspaces of $\scrH_R$ and $\scrH_S$ respectively for each tree $t$.
Consider the map 
$$U_t: \fH_{R,t}\to \fH_{S,t}, (t,\xi) \mapsto (t,\alpha(\xi)),$$
where we identify $\fH_{R,t}$ and $\fH_{S,t}$ with subspaces of the planar algebra $\cP$.
The system of maps $(U_t)_{t\in\fT}$ agrees with the inductive structures of $(\fH_{R,t})_{t\in\fT}$ and $(\fH_{S,t})_{t\in\fT}$ and thus defines a map $U:\scrH_R\to\scrH_S.$
It turns out that $U$ is a unitary transformation satisfying $\Ad(U)\circ \pi_R = \pi_S.$
\end{proof}

Note that the TLJ-planar algebra does not have any non-trivial automorphisms.
However, it has some symmetries that behave almost as automorphisms.
Fix a loop parameter $\delta$ and consider the TLJ-planar algebra $\TLJ:=\TLJ(\delta)$.
Let $D(m,n)$ be the set of rectangular TLJ-diagrams with $m$ boundary points on the bottom and $n$ on top.
The map that consists of a symmetry with respect to a vertical line on $D(m,n)$ 
(that we extend linearly to $\Rec(\TLJ)(m,n)$) 
 induces an involution $\sigma_{\TLJ}:\Rec(\TLJ)\to \Rec(\TLJ)$ defined on the rectangular category associated to the planar algebra $\TLJ.$

\medskip
We also need the homeomorphism $x\mapsto 1-x$ of the unit interval  and the induced order two automorphism $\sigma_F$ of the Thompson's group $F$ given by the formula $\sigma_F(g)(x)=1-g(1-x)$ for $x\in [0,1]$ and $g\in F$ (when $F$ is viewed as a subgroup of the homeomorphism group of $[0,1]$).
If $\sigma$ is the involution on the set of forests that consists of a symmetry with respect to a vertical line, then 
$$\sigma_F\Big(\frac{s}{t}\Big) = \frac{ \sigma(s)}{\sigma(t)}.$$
For instance, it is not difficult to see that $\sigma_F(x_0)=x_0^{-1}$.
The automorphism $\sigma_F$ is not inner, see for instance \cite[Theorem 1]{Br96} for a thorough analysis of ${\rm Aut}(F)$
and also \cite[Section 4]{Heo}, where it is shown that $\sigma_F$ is not inner in the group von Neumann algebra $L(F)$.
Consider a normalized element $R\in\TLJ$.

\begin{proposition}\label{abvsba}
The map $\sigma_{\TLJ}$ induces a unitary transformation $u_\sigma: \scrH_R\to \scrH_{\sigma_{\TLJ}(R)}$ given by $(t,\xi)\mapsto (\sigma(t),\sigma_{\TLJ}(\xi))$, mapping the vacuum vector $\Omega_R$ into the vacuum vector $\Omega_{\sigma_{\TLJ}(R)}$ and such that
$$\Ad(u_\sigma)\circ \pi_R  = \pi_{\sigma_{\TLJ}(R)} \circ \sigma_F.$$
\end{proposition}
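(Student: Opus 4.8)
The plan is to reflect the whole construction through a vertical line. First I would set $S:=\sigma_{\TLJ}(R)$ and observe that, since $\sigma_{\TLJ}$ preserves $\dag$, vertical composition, and the cap/cup tangles, $S$ is again a normalized element of $\TLJ$ and hence comes with its own functor $\Phi_S$, Hilbert space $\scrH_S$, representation $\pi_S$ and vacuum vector $\Omega_S=\Omega_{\sigma_{\TLJ}(R)}$. The map $u_\sigma$ will then be produced component by component, exactly along the lines of Proposition~\ref{phase}.

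The heart of the argument is the functorial identity
$$\sigma_{\TLJ}(\Phi_R(f))=\Phi_S(\sigma(f))\qquad\text{for every forest }f.$$
I would prove this by induction on the number of branchings of $f$, using that $\cF$ is generated by the trivial tree $\vert$ and the caret $Y$ under $\circ$ and $\bu$, together with three bookkeeping facts: (i) both $\sigma_{\TLJ}$ and $\sigma$ are covariant for vertical composition, $\sigma_{\TLJ}(x\circ y)=\sigma_{\TLJ}(x)\circ\sigma_{\TLJ}(y)$, and reverse horizontal concatenation, $\sigma_{\TLJ}(x\bu y)=\sigma_{\TLJ}(y)\bu\sigma_{\TLJ}(x)$; (ii) $\Phi_R$ is a functor for $\circ$, while $\Phi_R(f_1\bu f_2)=\Phi_R(f_1)\bu\id_1\bu\Phi_R(f_2)$, the extra strand being the trivial tree inserted between consecutive trees of $\tilde f$; and (iii) the base cases, where $\sigma(Y)=Y$ with $\Phi_R(Y)=R_1^3$ and $\Phi_S(Y)=S_1^3=\sigma_{\TLJ}(R_1^3)$, and $\sigma(\vert)=\vert$ with $\Phi_R(\vert)=\id_1=\sigma_{\TLJ}(\id_1)$. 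Feeding these into one another along a $(\circ,\bu)$-decomposition of $f$ yields the identity. Applied to trees it gives $\sigma_{\TLJ}(\Phi_R(n))=\Phi_S(n)$ (since $\sigma$ permutes the $n$-leaf trees), so the rule $(t,\xi)\mapsto(\sigma(t),\sigma_{\TLJ}(\xi))$ genuinely carries $\fH_{R,t}$ onto $\fH_{S,\sigma(t)}$.

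Next I would assemble $u_\sigma$ as in Proposition~\ref{phase}: the maps $U_t\colon(t,\xi)\mapsto(\sigma(t),\sigma_{\TLJ}(\xi))$ are compatible with the directed systems, since applying $U$ to the defining relation $(t,\xi)\sim(ft,\Phi_R(f)\xi)$ and using $\sigma(ft)=\sigma(f)\circ\sigma(t)$ and the identity above produces the corresponding relation on the $S$-side. The resulting map $u_\sigma\colon\scrH_R\to\scrH_S$ is isometric because $\sigma_{\TLJ}$ preserves the inner product $\langle x,y\rangle=y^\dag\circ x$ (it commutes with $\dag$ and $\circ$ and fixes $\Rec(\TLJ)(1,1)\cong\C$), and surjective because $\sigma$ and $\sigma_{\TLJ}$ are involutions; it maps $\Omega_R=(\vert,1)$ to $\Omega_S=(\vert,1)$ as $\sigma$ fixes $\vert$ and $\sigma_{\TLJ}$ fixes $\id_1$. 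For the intertwining, given $g=\tfrac ts$ one has $\sigma_F(g)=\tfrac{\sigma(t)}{\sigma(s)}$; choosing forests $p,q$ with $p\circ s=q\circ v$, the defining formula gives $u_\sigma\pi_R(g)(v,\xi)=u_\sigma(pt,\Phi_R(q)\xi)=(\sigma(p)\sigma(t),\Phi_S(\sigma(q))\sigma_{\TLJ}(\xi))$, while applying $\sigma$ to $p\circ s=q\circ v$ turns it into $\sigma(p)\circ\sigma(s)=\sigma(q)\circ\sigma(v)$, so the same formula for $\pi_S$ gives $\pi_S(\sigma_F(g))u_\sigma(v,\xi)=(\sigma(p)\sigma(t),\Phi_S(\sigma(q))\sigma_{\TLJ}(\xi))$ as well. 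Hence $\Ad(u_\sigma)\circ\pi_R=\pi_{\sigma_{\TLJ}(R)}\circ\sigma_F$ on a total set of vectors, and so everywhere.

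I expect the only genuinely content-bearing point to be the functorial identity, and within it the single diagrammatic check that reflecting the trivalent morphism $R_1^3$ attached to a branching produces $(\sigma_{\TLJ}R)_1^3$ attached to the mirrored branching, together with the care needed to track the trivial strands of $\tilde f$ through the order reversal. Everything downstream---the gluing of the $U_t$, unitarity, preservation of the vacuum, and the intertwining relation---is the same manipulation of the directed system $(\fH_t)_{t\in\fT}$ already carried out in the preceding propositions, so I would only sketch it.
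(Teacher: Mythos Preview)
Your proof is correct and follows essentially the same approach as the paper's own argument: both hinge on the functorial identity $\sigma_{\TLJ}(\Phi_R(f))=\Phi_{\sigma_{\TLJ}(R)}(\sigma(f))$, then check that the induced map on the inductive limit is well defined, unitary, and satisfies the intertwining relation by a direct computation on a total set. You are in fact more explicit than the paper about how to establish the functorial identity (you sketch an induction on the $(\circ,\bu)$-structure of $f$, whereas the paper simply asserts it), but the logical skeleton is the same.
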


\begin{proof}
One can check that $\Phi_{\sigma_{\TLJ}(R)}(\sigma(f))=\sigma_{\TLJ}(\Phi_R(f))$ for any forest $f$.
Moreover, the maps $\sigma$ and $\sigma_{\TLJ}$ are multiplicative with respect to vertical concatenation. 
This implies that $u_\sigma$ is well defined.
It is easy to see that $u_\sigma$ is a unitary transformation whose inverse 
maps $(s,\eta)\in \scrH_{\sigma_{\TLJ}(R)}$ to $(\sigma(s), \sigma_{\TLJ}(\eta))\in \scrH_R.$ 
Consider a vector $(t,\xi)\in \fH_{\sigma_{\TLJ}(R),t}$ and an element $g=\frac{s}{r}\in F$. 
We have that
\begin{align*}
\Ad(u_\sigma)\circ \pi_R(g)  \frac{t}{\xi} & = u_\sigma \pi_R(g) \frac{\sigma(t)}{ \sigma_{\TLJ}(\xi)} =  u_\sigma \pi_R\left(\frac{ps}{ pr}\right) \frac{q\sigma(t)}{ \Phi_R(q)\sigma_{\TLJ}(\xi)}\\
& = u_\sigma \left(\frac{ps}{ \Phi_R(q)\sigma_{\TLJ}(\xi)}\right)= \frac{\sigma(ps)}{  \sigma_{\TLJ}\circ\Phi_R(q)\xi}= \frac{\sigma(ps)}{ \Phi_{\sigma_{\TLJ} (R) }(\sigma(q))\xi}
\end{align*}
where $pr=q\sigma(t)$.
On the other hand, we have that
\begin{align*}
 \pi_{\sigma_{\TLJ} (R)}(\sigma_F(g)) \frac{t}{\xi} & =\pi_{\sigma_{\TLJ}(R)}\left(\frac{\sigma(s)}{ \sigma(r) } \right)\frac{t}{\xi}
  =\pi_{\sigma_{\TLJ}(R)}\left(\frac{\sigma(p) \sigma(s) }{ \sigma(p) \sigma(r) }\right) \frac{ \sigma(q) t }{ \Phi_{\sigma_{\TLJ}(R)}(\sigma(q))\xi }\\
 & = \frac{\sigma(ps)}{ \Phi_{\sigma_{\TLJ} (R)}(\sigma(q))\xi}.
\end{align*}
This concludes the proof by a density argument.
\end{proof}

We can provide a similar statement for any subfactor planar algebra, but with an anti-unitary.
The proof is similar to the one given above and we leave it to the reader.

\begin{proposition}
Let $\CP$ be a subfactor planar algebra, $R$ a normalized element, with its associated functor $\Phi_R$ and unitary representation $(\pi_R,H_R).$
Define the following anti-linear involution of the rectangular category:
$$
\begin{tikzpicture}[baseline = -.1cm]
\draw (-0.25,-0.25)--(.25,-0.25);
\draw (-0.25,0.25)--(.25,0.25);
\draw (0.25,-0.25)--(.25,0.25);
\draw (-0.25,-0.25)--(-.25,0.25);
\draw (0,0.25)--(0,0.5);
\draw (0,-0.25)--(0,-0.5);
\node at (-1,0) {$J :$};
\node at (0,0) {$x$};
\node at (-0.45,0) {{\scriptsize $\$$}}; 
\node at (0,0.65) {{\scriptsize $n$}};  
\node at (0,-0.65) {{\scriptsize $m$}};
\end{tikzpicture}\mapsto
\begin{tikzpicture}[baseline = -.1cm]
\draw (-0.25,-0.25)--(.25,-0.25);
\draw (-0.25,0.25)--(.25,0.25);
\draw (0.25,-0.25)--(.25,0.25);
\draw (-0.25,-0.25)--(-.25,0.25);
\draw (0,0.25)--(0,0.5);
\draw (0,-0.25)--(0,-0.5);
\node at (0,0) {$x^*$};
\node at (0.45,0) {{\scriptsize $\$$}}; 
\node at (0,0.65) {{\scriptsize $n$}};  
\node at (0,-0.65) {{\scriptsize $m$}};
\end{tikzpicture}
$$
and put $S:=J(R)$.
Then the map $J$ is multiplicative, 
 $J(\Phi_R(f)) = \Phi_{J(R)}(\sigma(f))$ for any forest $f$, and the element $S$ is normalized. 
Moreover, $J$ induces an anti-unitary transformation $U_J:\scrH_R\to \scrH_S$ densely defined as $U_J((t,\xi)):=(\sigma(t), J(\xi))$ for any $(t,\xi)\in \fH_{R,t}$.
Finally, we obtain the following equality
$$\Ad(U_J)\circ \pi_R = \pi_S\circ \sigma_F.$$ 
\end{proposition}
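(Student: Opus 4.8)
The plan is to transcribe the proof of Proposition~\ref{abvsba} almost verbatim, the one new feature being that $J$ is \emph{anti}-linear, so the intertwiner it produces is anti-unitary rather than unitary. The first task, and the one requiring genuine (if routine) diagrammatic checking, is to record the basic properties of $J$: it is an anti-linear involution of $\Rec(\CP)$; it is multiplicative for vertical concatenation of boxes, hence covariant for composition of morphisms, since the left--right reflection is order-preserving on a vertical stack; it commutes with the $\dag$-operation of the rectangular category, as the horizontal and vertical reflections commute and the two $*$'s match up; and it intertwines the trace with complex conjugation, $\tau(J(z))=\overline{\tau(z)}$, which uses sphericality of $\CP$ (so $\tau$ is unchanged by the left--right reflection) together with $\tau(z^*)=\overline{\tau(z)}$. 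Granting these, the identity $J(\Phi_R(f))=\Phi_{J(R)}(\sigma(f))$ is proved by reducing to the elementary forests with a single caret: on such a forest $\Phi_R$ produces a single copy of the morphism attached to $R$ in a prescribed slot, $\sigma$ reflects that slot, and $J(R)=S$ by definition; the general case then follows because $\Phi_R$ and $\Phi_S$ are functors on $\cF$, $J$ is multiplicative, and $\sigma(f\circ g)=\sigma(f)\circ\sigma(g)$. Since $J$ commutes with $\dag$ and with composition it preserves the isometry property of the morphism attached to $R$, so $S=J(R)$ is again normalized and $\Phi_S$, $(\pi_S,\scrH_S)$ are defined.

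Next I would assemble $U_J$. For a tree $t$ with $n$ leaves set $U_t\colon\fH_{R,t}\to\fH_{S,\sigma(t)}$, $(t,\xi)\mapsto(\sigma(t),J(\xi))$; this makes sense because $J(\Phi_R(n))=\Phi_S(n)$, which is immediate from $J(\Phi_R(t'))=\Phi_S(\sigma(t'))$ together with the fact that $t'\mapsto\sigma(t')$ permutes the trees with $n$ leaves. The family $(U_t)_{t\in\fT}$ is compatible with the two inductive limits, since $U_{ft}\circ\iota^{ft}_t=\iota^{\sigma(f)\sigma(t)}_{\sigma(t)}\circ U_t$ unwinds to the identities $\sigma(ft)=\sigma(f)\sigma(t)$ and $J(\Phi_R(f)\xi)=J(\Phi_R(f))\circ J(\xi)=\Phi_S(\sigma(f))J(\xi)$ just established. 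Hence the $U_t$ glue to a densely defined anti-linear map $U_J\colon\scrH_R\to\scrH_S$, $U_J((t,\xi))=(\sigma(t),J(\xi))$, which sends $\Omega_R=(\vert,1)$ to $(\vert,1)=\Omega_S$. It is anti-unitary: on $\fH_{R,t}$ one gets $\langle U_t\xi,U_t\eta\rangle_S=\tau\big((J\eta)^\dag\circ J\xi\big)=\tau\big(J(\eta^\dag\circ\xi)\big)=\overline{\tau(\eta^\dag\circ\xi)}=\overline{\langle\xi,\eta\rangle_R}$, using that $J$ commutes with $\dag$, is multiplicative, and conjugates $\tau$; and it is bijective because $J$ is an involution, so $U_J^{-1}$ has the same form with $R$ and $S$ exchanged. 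Finally, $\Ad(U_J)\colon T\mapsto U_JTU_J^{-1}$ sends linear operators to linear operators and unitaries to unitaries, so $\Ad(U_J)\circ\pi_R$ is an honest unitary representation of $F$, which is what we compare with $\pi_S\circ\sigma_F$.

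It remains to check the intertwining relation on the dense subspace, exactly as in Proposition~\ref{abvsba}. Fix $g=\frac{s}{r}\in F$ and $(t,\xi)\in\fH_{S,t}$; then $U_J^{-1}(t,\xi)=(\sigma(t),J\xi)$, and choosing forests $p,q$ with $p\circ r=q\circ\sigma(t)$ the defining formula for $\pi_R$ gives $\pi_R(g)\frac{\sigma(t)}{J\xi}=\frac{ps}{\Phi_R(q)J\xi}$, so that $U_J\,\frac{ps}{\Phi_R(q)J\xi}=\frac{\sigma(ps)}{J(\Phi_R(q)J\xi)}=\frac{\sigma(p)\sigma(s)}{\Phi_S(\sigma(q))\xi}$, using $J$ multiplicative, $J^2=\id$, and $J\circ\Phi_R(q)=\Phi_S(\sigma(q))\circ J$. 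On the other hand, applying $\sigma$ to $p\circ r=q\circ\sigma(t)$ yields $\sigma(p)\circ\sigma(r)=\sigma(q)\circ t$, whence $\pi_S(\sigma_F(g))\frac{t}{\xi}=\pi_S\big(\frac{\sigma(s)}{\sigma(r)}\big)\frac{t}{\xi}=\frac{\sigma(p)\sigma(s)}{\Phi_S(\sigma(q))\xi}$ as well, and the two coincide. By density, $\Ad(U_J)\circ\pi_R=\pi_S\circ\sigma_F$. The only genuinely substantive step, as opposed to bookkeeping, is the first one — pinning down the diagrammatic behaviour of $J$, i.e.\ its multiplicativity, its commutation with $\dag$, and $\tau\circ J=\overline{\tau}$ — after which both the construction of $U_J$ and the intertwining computation are a faithful copy of the earlier argument with anti-linearity carried through.
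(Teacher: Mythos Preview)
Your proposal is correct and is exactly what the paper intends: it explicitly says ``The proof is similar to the one given above and we leave it to the reader,'' and you have faithfully transcribed the argument of Proposition~\ref{abvsba} while carrying the anti-linearity of $J$ through (recording, as you note, the routine diagrammatic facts that $J$ is a multiplicative anti-linear involution commuting with $\dag$ and conjugating $\tau$). There is nothing to add.
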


In general, it would also make sense to restrict a representation $\pi_R$ of $F$ as above to some notable subgroups of $F$.
\begin{remark}
The subgroup $F^\sigma:=\{g\in F : \sigma_F(g)=g\}$ is isomorphic to $F$ itself since it corresponds to the maps $g:[0,1]\to [0,1]$ of $F$ whose graph is symmetric with respect to the point $(1/2,1/2).$ 
We have the following isomorphism: 
$$\alpha:F\to F^\sigma,\quad \frac{s}{t}\mapsto \frac{(s\bullet \sigma(s)) \circ Y }{ (t\bullet \sigma(t)) \circ Y }.$$
Consider the unitary representation $(\pi,\scrH)$ associated to a normalized element $R$ of an \emph{irreducible} subfactor planar algebra $\CP$.
We define a new unitary representation of $F$ as follows: $\pi^\sigma:=\pi\circ\alpha.$
Consider the vacuum vector $\Omega\in \scrH$ and observe that 
$$\langle\pi^\sigma(g)\Omega , \Omega \rangle = \langle\pi(g)\Omega , \Omega \rangle\langle\pi(\sigma_F(g))\Omega , \Omega \rangle, \quad g\in F.$$
Note that we use the fact that the 1-box space of $\CP$ is one-dimensional.
It follows from \cite[Proposition 2.4.1., p.~37]{Dix}  that 
$$\pi^\sigma \simeq \pi\otimes (\pi\circ\sigma_F).$$
\end{remark}

\subsection{Some properties of the Jones representations}
The shift endomorphism of the Thompson's group $F$ is the map $\phi_F$ defined as $\phi_F(x_i)=x_{i+1}$ for any $i\geq 0$ for the usual presentation of $F$, see Section \ref{sec:JonesRep}.
It can be diagrammatically defined as follows: if $t$ is a tree with $n$ leaves we consider $\phi(t)\in\cF(1,n+1)$ that is a new tree equal to the composition of the unique tree with two leaves $Y$ with the forest with the trivial tree on the left and the tree $t$ on the right, i.e.~$\phi(t)= ( \vert \bullet t)\circ Y$.
If $g\in F$ is described by the pair of trees $(t,s)$, then we observe that $\phi_F(g)$ is described by the pair of shifted trees $(\phi(t),\phi(s)).$
We drop the subscript $F$ for $\phi$ when it is clear from the context.

\begin{proposition}\label{prop:non-regular}
If $g\in F$ is non-trivial, then the sequence $g_0=g,g_{n+1}:=\phi(g_n), n\geq 0$ tends to infinity in $F$ 
(for the Fr\'echet filter). 
Consider a Jones representation $\pi$ constructed with an irreducible $\cP$ subfactor planar algebra (
in particular, the one box space $\cP_1^+$ is one dimensional), then the vacuum vector state $\varphi$ is invariant under the shift of $F$.
In particular, $\pi$ is not contained in any direct sum of copies of the left regular representation as long as there exists some non-trivial $h \in F$ with $\varphi(h) \neq 0$.
\end{proposition}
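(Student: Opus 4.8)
The plan is to establish the three assertions in order, since each feeds into the next. First I would show that $g_n \to \infty$ in $F$ for the Fr\'echet filter, meaning that for every finite set $S \subset F$ there is $N$ with $g_n \notin S$ for $n \ge N$; equivalently, that the sequence $(g_n)$ visits each element of $F$ only finitely often. Writing $g = \frac{t}{s}$ with $t,s$ trees having, say, $m$ leaves, the element $g_n$ is represented by the pair $(\phi^n(t), \phi^n(s))$, and $\phi^n(t) = (\vert \bullet \phi^{n-1}(t))\circ Y = \cdots$ is a tree with $m+n$ leaves whose leftmost leaf has been pushed down by a ``caret spine'' of length $n$. The key point is that this representative is \emph{reduced} in the sense that no common forest can be cancelled from $\phi^n(t)$ and $\phi^n(s)$: if $g$ itself is given in reduced form $(t,s)$ (no common caret at a leaf), then the long left spine added by $\phi^n$ cannot be part of a common cancellation because $g$ is non-trivial, so $t \ne s$, and the spine attaches to the common root region identically on both sides. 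Hence the number of leaves of the reduced representative of $g_n$ grows linearly in $n$; since for each fixed element $h \in F$ the number of leaves of its reduced representative is a fixed integer, each $h$ can equal $g_n$ for at most finitely many $n$, which is exactly convergence to infinity along the Fr\'echet filter. (One must be slightly careful that $g$ non-trivial is used here: if $g = e$ then $g_n = e$ for all $n$ and the statement fails, consistent with the hypothesis.)

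Next I would prove shift-invariance of $\varphi$, i.e.\ $\varphi(\phi(g)) = \varphi(g)$ for all $g \in F$, under the hypothesis that $\cP$ is irreducible. Using the formula $\varphi\left(\frac{t}{s}\right) = \langle \Phi(s), \Phi(t)\rangle$ recorded in the excerpt, and $\phi(g) = \frac{\phi(t)}{\phi(s)} = \frac{(\vert\bullet t)\circ Y}{(\vert\bullet s)\circ Y}$, we get
$$
\varphi(\phi(g)) = \langle \Phi((\vert\bullet s)\circ Y), \Phi((\vert\bullet t)\circ Y)\rangle = \big\langle \Phi(Y)\circ\Phi(s), \Phi(Y)\circ\Phi(t)\big\rangle
$$
after using functoriality and $\Phi(\vert\bullet s) = \id_1 \bullet \Phi(s)$ (the extra trivial tree on the left contributes an identity strand, and the box $R$ is only inserted at genuine branchings). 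Here $\Phi(Y) = R$ as a morphism $1 \to 3$, so $\Phi(Y)\circ\Phi(t)$ is obtained from $\Phi(t)$ by capping its three output strands — or rather, by precomposing with the single $R_1^3$ box at the new root. The inner product is computed in $\Rec(\cP)(1,\tilde m)$ as $y^\dag \circ x \in \Rec(\cP)(1,1) = \C$, using that $\cP$ is irreducible. So
$$
\langle \Phi(Y)\circ\Phi(s), \Phi(Y)\circ\Phi(t)\rangle = (\Phi(Y)\circ\Phi(t))^\dag \circ (\Phi(Y)\circ\Phi(s)) = \Phi(t)^\dag \circ \big(\Phi(Y)^\dag \circ \Phi(Y)\big)\circ \Phi(s),
$$
and the normalization condition on $R$ — precisely the identity $R^\dag \circ R = \id_1$ that ``normalized'' encodes (fig.~1 in the excerpt) — gives $\Phi(Y)^\dag\circ\Phi(Y) = \id_1$, so this equals $\Phi(t)^\dag\circ\Phi(s) = \langle\Phi(s),\Phi(t)\rangle = \varphi(g)$. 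I expect this to be the main obstacle: getting the bookkeeping right for how $\Phi$ treats the trivial tree and the new root caret, and confirming that the relevant normalization identity is exactly what is needed — but morally it is just ``$R$ is an isometry.''

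Finally, for the last sentence, suppose for contradiction that $\pi$ were a subrepresentation of $\bigoplus_{i\in I}\lambda_F$ for some index set $I$. Matrix coefficients of $\lambda_F$ lie in $c_0(F)$ (indeed any finite sum of translates is still $c_0$, and a direct sum of copies of $\lambda_F$ has all its matrix coefficients in $c_0(F)$ as well, since a coefficient of $\bigoplus_i \lambda_F$ against a unit vector $\xi = (\xi_i)$ is $\sum_i \langle\lambda_F(g)\xi_i,\xi_i\rangle$, an $\ell^1$-sum of $c_0$-functions, hence $c_0$). In particular $\varphi(\cdot) = \langle \pi(\cdot)\Omega,\Omega\rangle$ would vanish at infinity on $F$. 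But take the non-trivial $h \in F$ with $\varphi(h)\ne 0$ provided by hypothesis; by the first part, $h_n := \phi^n(h) \to \infty$, while by the second part $\varphi(h_n) = \varphi(h) \ne 0$ for all $n$, contradicting $\varphi \in c_0(F)$. Hence no such embedding exists, which is the claim. I would make explicit at the start of this last step the standard fact that matrix coefficients of (arbitrary direct sums of) the left regular representation of a discrete group are $c_0$, citing it as classical, so that the only real content delivered here is the combination of the first two parts.
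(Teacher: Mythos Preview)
Your approach matches the paper's for all three parts: growth of the reduced leaf-count gives $g_n\to\infty$; normalization of $R$ together with irreducibility gives shift-invariance of $\varphi$; and the $c_0$ property of coefficients of $\lambda^{\oplus\infty}$ yields the contradiction.

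One genuine bookkeeping slip in the second part is worth fixing. Your expression $\Phi(Y)\circ\Phi(s)$ does not typecheck (both are morphisms out of the object $1$), and $\Phi(\vert\bullet s)$ is $\id\bullet\id\bullet\Phi(s)$, not $\id\bullet\Phi(s)$: the construction inserts an extra strand between consecutive roots. The correct picture has $R=\Phi(Y)$ at the very bottom and $R^\dag$ at the very top, with two straight strands on the left and the blob $\Phi(t)^\dag\circ\Phi(s)\in\Rec(\cP)(1,1)$ on the right; so $R^\dag$ and $R$ are on the \emph{outside}, not sandwiched in the middle as your displayed manipulation suggests. It is precisely irreducibility that lets you replace $\Phi(t)^\dag\circ\Phi(s)$ by the scalar $\varphi(g)\cdot\id_1$, after which the three parallel strands remain and $R^\dag\circ R=\id_1$ finishes the job. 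This is exactly the paper's diagrammatic argument, and your anticipated ``obstacle'' is indeed just this reordering.
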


\begin{proof}
If $g$ is a non-trivial element, then it is described by a reduced pair of different trees.
It is then easy to see that $\phi(g)$ is still described by a reduced pair of trees but with one more leaf.
This implies that the number of leaves of the reduced pair associated with $g_n:=\phi^n(g)$ tends to infinity and thus $g_n$ tends to infinity.

The invariance of the vacuum vector state readily follows from the normalization property of $R$ and the irreducibility condition on $\cP$.
Indeed, identify the one box space $\cP_1^+$ with $\C$ via the trace and consider an element $g=\frac{t}{s}\in F.$
We have that 
$$\Phi(t)^*\circ \Phi(s) =  \begin{array}[c]{l}\includegraphics[scale= .325]{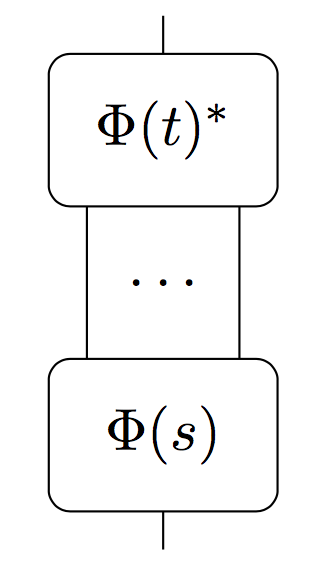}\end{array} = \varphi(g) \; |$$
Then 
$$\varphi(\phi(g)) \; \vert = \begin{array}[c]{l}\includegraphics[scale= .075]{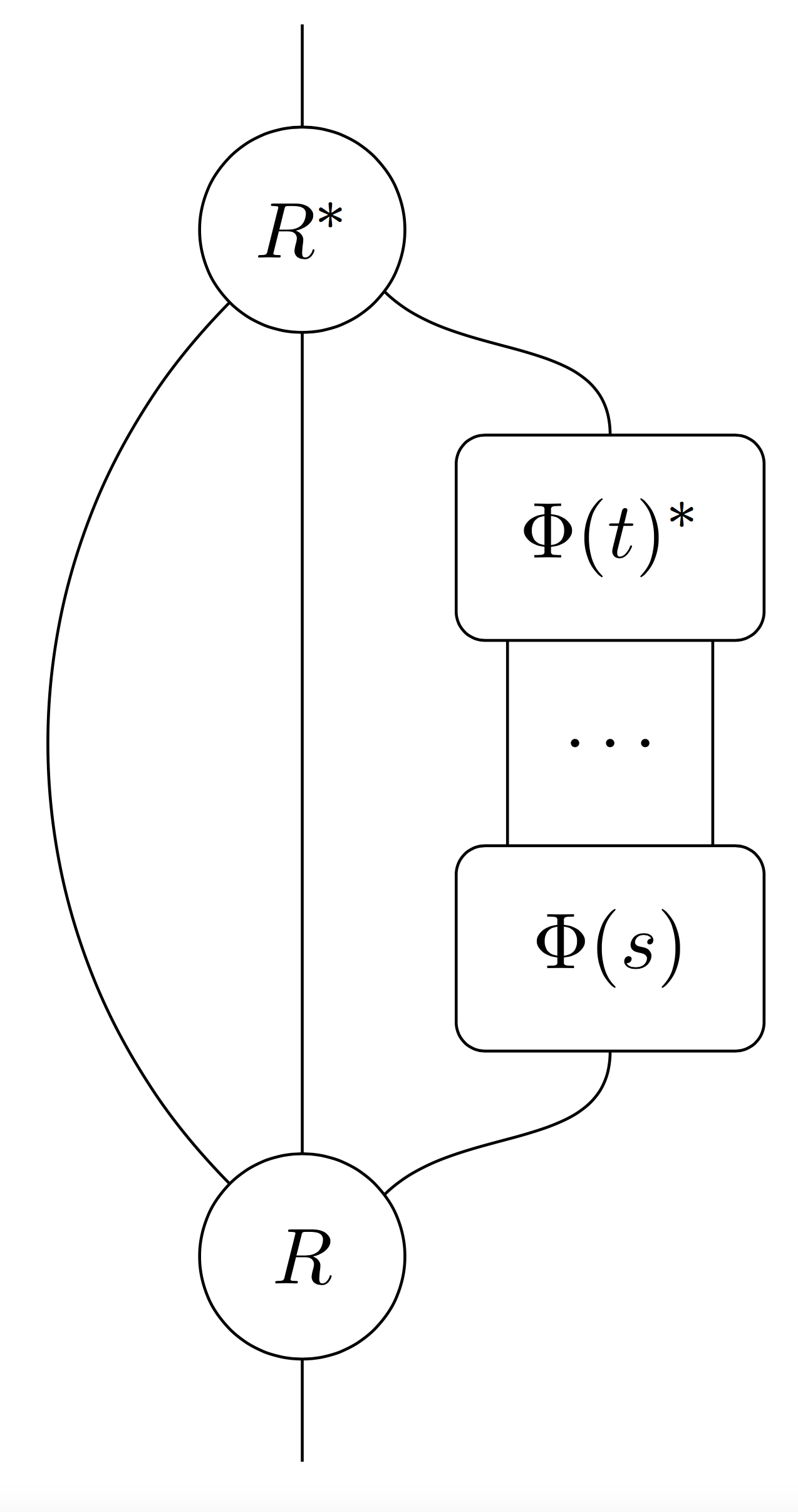}\end{array} = \varphi(g) \begin{array}[c]{l}\includegraphics[scale= .1]{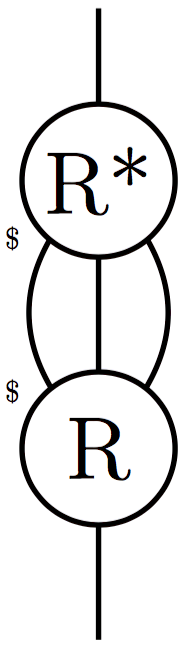}\end{array}  = \varphi(g) \; |$$

Suppose that there exists a non-trivial $h\in F$ such that $\varphi(h)\neq 0.$
Then, the vacuum vector state does not tend to zero at infinity  since $\varphi(\phi^n(h)) = \varphi(h)\neq 0$.
Recall that any vector in the carrier Hilbert space of $\lambda^{\oplus \infty}$, the infinite direct sum of copies of the left regular representation of $F$, tends to zero at infinity.
Therefore, $\pi$ is not contained in $\lambda^{\oplus \infty}$.
\end{proof}

In particular,  it follows that a Jones representation with a non-trivial stabilizer of the vacuum vector is never contained in the regular representation of $F$.
Note that such a $\pi$ can still be unitarily equivalent to a quasi-regular representation, as it was shown in \cite{Jo14}, cf.~Section \ref{sec:stab}.
In Section \ref{sec:notzero} we will show that for any unitary representation constructed with TLJ and two real parameters $a,b$ we have   $\varphi(x_0)\neq 0$ for $x_0\in F$ in all the cases but one.

\section{Family of representations constructed with the TLJ planar algebra}\label{sec:TLJ} \label{sec-TLS-reps}
\subsection{First observations}

Fix a loop parameter $\delta\in \{2 \cos(\pi/k) \ | k = 4, 5, 6, \ldots \} \cup [2,+\infty)$ and consider the Temperley-Lieb-Jones planar algebra $\TLJ:=\TLJ(\delta)$ with this parameter.
A normalized element $R$ of $\TLJ$ is a linear combination of the identity tangle
$R_1 = \vert \, \vert $
and  a multiple of the Jones projection $R_2= {}^\cup_\cap$\, .

Observe that if $R=aR_1+bR_2\in \TLJ$, where $a,b \in \C$, then $R $ is normalized if and only if 
\begin{equation}\label{normaliz}
(a,b)\in \CC^\delta :=\{(x,y)\in\C\times\C\; | \; \delta(|x|^2+|y|^2)+x\bar{y}+\bar{x}y=1\} \ . 
\end{equation}
Note that the curve $\CC^\delta$ is invariant under various operations such as the flip of coordinates, the simultaneous complex conjugation of both the coordinates and the simultaneous multiplication of both the coordinates by a phase factor.
Given $(a,b)\in\CC^\delta$, we write $\pi^\delta_{a,b}$ and $\varphi^\delta_{a,b}$ for the associated Jones representation and vacuum vector state.

\smallskip
We first observe that if $a$ or $b$ is equal to zero, then we obtain the trivial representation.
The proof is rather obvious but we include it in order to illustrate the formalism. 

\begin{proposition}\label{prop:pitrivial}
Consider $(a,b)\in \CC^\delta$ such that $a$ or $b$ is equal to zero and let $(\pi,\scrH)$ be the associated representation.
Then $\scrH$ is one dimensional and $\pi$ is the trivial representation.
\end{proposition}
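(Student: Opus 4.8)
The plan is to bypass all diagrammatics by recognizing $(\pi,\scrH,\Omega)$ as the GNS triple of the vacuum state $\varphi=\varphi^\delta_{a,b}$, regarded as a normalized positive-definite function on $F$: indeed $\Omega$ is cyclic by construction and $\varphi(g)=\langle\pi(g)\Omega,\Omega\rangle$ with $\|\Omega\|=1$, so by uniqueness of the GNS construction $(\pi,\scrH,\Omega)$ can be identified with the GNS triple of $\varphi$. Since the GNS representation attached to the constant function $1$ is the trivial one-dimensional representation, it is enough to show that $\varphi\equiv 1$ whenever $a=0$ or $b=0$.

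For this I would invoke Proposition~\ref{phix0x1}, which applies because $\TLJ(\delta)$ is a subfactor planar algebra and $(a,b)\in\CC^\delta$. When $ab=0$, its first formula gives $\varphi(x_0)=1+|ab|^2(1-\delta^2)=1$, and its second formula gives $\varphi(x_1x_0)=\varphi(x_0)^2=1$; note that this second identity holds for all parameters, so no extra case analysis is needed.

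To finish I would use the elementary fact that for a normalized positive-definite function the set $N:=\{g\in F:\varphi(g)=1\}$ is a subgroup of $F$: in the GNS picture $\varphi(g)=1$ forces $\pi(g)\Omega=\Omega$ by the equality case of the Cauchy--Schwarz inequality, so $N=\mathrm{Stab}_\pi(\Omega)$. By the previous step $x_0\in N$ and $x_1x_0\in N$, hence $x_1=(x_1x_0)x_0^{-1}\in N$; since $x_0$ and $x_1$ generate $F$ (e.g. $x_{n+1}=x_0^{-1}x_nx_0$ from the infinite presentation), we conclude $N=F$, i.e. $\varphi\equiv 1$. Therefore $\pi(g)\Omega=\Omega$ for every $g\in F$, and cyclicity of $\Omega$ yields $\scrH=\C\Omega$ with $\pi$ the trivial representation.

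There is essentially no obstacle here: once Proposition~\ref{phix0x1} is granted, the argument is a few lines, which is consistent with the remark that the statement is ``rather obvious''. If instead one wanted a proof that genuinely ``illustrates the formalism'', the alternative is to check directly that for $a=0$ (resp. $b=0$) the morphism $R\in\Rec(\TLJ)(1,3)$ used in the construction reduces to a scalar multiple of a single Temperley--Lieb diagram, so that every $\Phi(t)$ with $t$ having $n$ leaves is a scalar multiple of one fixed diagram depending only on $n$ and not on the shape of $t$; then each space $\Phi(n)$ is one-dimensional, the connecting isometries of the inductive system $(\fH_t)_{t\in\fT}$ are bijective, $\scrH$ is one-dimensional, and $\varphi(x_0)=1$ together with multiplicativity of a character forces triviality. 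The only mildly delicate point in that route is the tree-independence of the fixed diagram, which is exactly the bookkeeping the GNS argument avoids.
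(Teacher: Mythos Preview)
Your argument is correct. It is, however, a genuinely different route from the paper's own proof.

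The paper proceeds purely diagrammatically: for $b=0$ it fixes $a=\delta^{-1/2}$ (legitimate by Proposition~\ref{phase}) and proves by induction on the number of leaves that $\Phi_R(t)=\delta^{-(n-1)/2}\,\vert\bullet\cup^{\bullet\,n-1}$ for \emph{every} tree $t$ with $n$ leaves. This makes each $\fH_t$ one-dimensional outright, from which $\scrH\simeq\C$ and $\varphi\equiv 1$ follow immediately; the case $a=0$ is then deduced from Proposition~\ref{exchab}. This is precisely the ``alternative that illustrates the formalism'' you sketch in your last paragraph, including the tree-shape--independence step you flagged as the only delicate point.

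Your approach instead leverages the GNS identification of $(\pi,\scrH,\Omega)$ with the cyclic representation of the positive-definite function $\varphi$, then uses the already-computed values $\varphi(x_0)=1$ and $\varphi(x_1x_0)=\varphi(x_0)^2=1$ from Proposition~\ref{phix0x1} together with the fact that the stabilizer of $\Omega$ is a subgroup. What you gain is brevity and the complete avoidance of any inductive diagram bookkeeping; what the paper's proof gains is an explicit picture of the functor $\Phi$ in this degenerate case, which is the pedagogical point the authors announce. Note also that you could have shortened your own argument slightly by invoking the shift-invariance of $\varphi$ (Proposition~\ref{prop:non-regular}) to get $\varphi(x_1)=\varphi(x_0)=1$ directly, rather than passing through $x_1x_0$.
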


\begin{proof}
Assume that $b=0.$
Up to a multiplication by a phase we can assume that $a=\delta^{-1/2}.$
Write $\Phi_R$ the functor associated with $R$.
We claim that $\Phi_R(t)= \delta^{-(n-1)/2} \vert \bu \cup^{\bu n-1}$ for any tree $t$ with $n$ leaves.
The proof can be done by induction. The cases $n=1$ and $2$ are trivial. 
Suppose that the claim is true for $n-1$ and let $t$ be a tree with with $n$ leaves. 
Then $t$ may be seen as a composition of a tree $t'	\in \cF_2(1, n-1)$ and a forest $f\in \cF_2(n-1,n)$, with 
$$f=\begin{tikzpicture}[baseline = -.1cm]
\draw (-.5,-.5)--(-.5,.5);
\node at (-.08,.35) {\ldots};
\draw (.25,-.5)--(.25,.5);
\draw (.75 , .25)--(.5,.5);
\draw (.75,.25)--(1,.5);
\draw (1.25,-.5)--(1.25,.5);
\draw (0.75,-.5)--(0.75,.25);
\node at (1.75,.35) {\ldots};
\draw (2.1,-.5)--(2.1,.5);
\end{tikzpicture}
$$
where there are $k$ vertical edges on the left and $n-k-2$ vertical edges on the right. 
Consider the transformation $\thicksim : \cF_2\to \cF_3$ defined by replacing any binary branching with a ternary branching and adding a vertical straight line between two consecutive roots. Thus  we see that $\widetilde{t'}	\in \cF_3(1, 2n-3)$ and  $\tilde{f}\in \cF_3(2n-3,2n-1)$, where $\tilde{f}$ has the form
$$\tilde{f}=\begin{tikzpicture}[baseline = -.1cm]
\draw (-.5,-.5)--(-.5,.5);
\node at (-.08,.35) {\ldots};
\draw (.25,-.5)--(.25,.5);
\draw (1.25,-.5)--(1.25,.5);
\draw (0.75,-.5)--(0.75,.5);
\draw (.75 , .25)--(.5,.5);
\draw (.75,.25)--(1,.5);
\node at (1.75,.35) {\ldots};
\draw (2.1,-.5)--(2.1,.5);
\end{tikzpicture}
$$
with $2k$ straight lines on the left and $2n-2k-4$ on the right. Now, 
$$\Phi_R(\tilde{f})=\begin{tikzpicture}[baseline = -.1cm]
\draw (-.5,-.5)--(-.5,.5);
\node at (-.08,.35) {\ldots};
\draw (.25,-.5)--(.25,.5);
\draw (1,.5) arc (0 : -180 : .25) ;
\draw (1.25,-.5)--(1.25,.5);
\node at (1.75,.35) {\ldots};
\draw (2.1,-.5)--(2.1,.5);
\end{tikzpicture}
$$
is a morphism
with $2k+1$ straight lines on the left and $2n-2k-4$ on the right. 
By the induction hypothesis $\Phi_R(t')$ is a straight vertical line followed by $n-2$ cups. 
Since there is an odd number of straight lines to the left of the cup of $\Phi_R(\tilde{f})$ we obtain that $\Phi_R(t)=\Phi_R(\widetilde{t'})\circ \Phi_R(\tilde{f})=\delta^{-(n-1)/2} \vert \bu \cup^{\bu n-1}$, which proves the claim.

It follows that each space $\fH_t$ is one dimensional, as well as the inductive limit $\scrH.$
Now consider a group element $g=(t,s)\in F$ described by a pair of trees with $n$ leaves and observe that $\varphi(g)=\langle \Phi(s)\Omega, \Phi(t)\Omega\rangle = \langle \delta^{-(n-1)/2} \vert \bu \cup^{\bu n-1} , \delta^{-(n-1)/2} \vert \bu \cup^{\bu n-1}\rangle = 1$.
Therefore, $\pi$ is the trivial representation.\\
The case $a=0$ can be handled in a similar way but it also follows at once from Proposition \ref{exchab}.
\end{proof}
The following result is an immediate consequence of Proposition \ref{abvsba}.
\begin{proposition} \label{exchab}
For each $\delta$, we have $\pi^\delta_{a,b} \circ \sigma_F \cong \pi^\delta_{b,a}$ and
$\varphi^\delta_{a,b} \circ \sigma_F = \varphi^\delta_{b,a}$  for all $(a,b) \in \CC^\delta$.
\end{proposition}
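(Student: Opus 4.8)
The plan is to recognize Proposition \ref{exchab} as a direct translation of Proposition \ref{abvsba} into the specific setting of the TLJ planar algebra, where the normalized element is $R = aR_1 + bR_2$ with $R_1 = \vert\,\vert$ and $R_2 = {}^\cup_\cap$. First I would observe that the involution $\sigma_{\TLJ}$ defined on $\Rec(\TLJ)$ — reflection of rectangular TLJ-diagrams through a vertical line — fixes both generators $R_1$ and $R_2$, since the identity tangle and the cup-cap tangle are each symmetric under a vertical reflection. Consequently $\sigma_{\TLJ}(R) = \sigma_{\TLJ}(aR_1 + bR_2) = aR_1 + bR_2 = R$ for \emph{every} $(a,b) \in \CC^\delta$; in particular $\sigma_{\TLJ}$ does \emph{not} swap $a$ and $b$.

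The key point is therefore not $\sigma_{\TLJ}$ alone but its composition with the phase/flip symmetries of the curve $\CC^\delta$. I would recall from the remark following \eqref{normaliz} that $\CC^\delta$ is invariant under the flip of coordinates $(a,b) \mapsto (b,a)$. The element $R_{b,a} := bR_1 + aR_2$ is then obtained from $R_{a,b}$ by an explicit rotation of the $2$-box: concretely, conjugating the $2$-box by a $90^\circ$ rotation (equivalently, applying the Fourier-type rotation tangle on $\TLJ_2^+$) interchanges $R_1$ and $R_2$ up to the relations in $\TLJ$, so that the rotated element of $R_{a,b}$ equals $R_{b,a}$. Combined with Proposition \ref{abvsba}, which gives a unitary $u_\sigma : \scrH_{R_{a,b}} \to \scrH_{\sigma_{\TLJ}(R_{a,b})}$ intertwining $\pi_{R_{a,b}}$ with $\pi_{\sigma_{\TLJ}(R_{a,b})} \circ \sigma_F$, and with the earlier propositions asserting that applying a planar-algebra automorphism (or the rotation) to $R$ does not change the unitary class of $\pi_R$, one chains these equivalences to obtain $\pi^\delta_{a,b} \circ \sigma_F \cong \pi^\delta_{b,a}$.

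For the statement about vacuum states, $\varphi^\delta_{a,b} \circ \sigma_F = \varphi^\delta_{b,a}$, I would argue at the level of matrix coefficients: Proposition \ref{abvsba} shows $u_\sigma$ maps the vacuum vector $\Omega_R$ to the vacuum vector $\Omega_{\sigma_{\TLJ}(R)}$, so for $g \in F$,
$$\varphi_R(\sigma_F(g)) = \langle \pi_{\sigma_{\TLJ}(R)}(\sigma_F(g))\Omega_{\sigma_{\TLJ}(R)}, \Omega_{\sigma_{\TLJ}(R)}\rangle = \langle u_\sigma \pi_R(g) \Omega_R, u_\sigma \Omega_R\rangle = \varphi_R(g),$$
wait — more carefully, $\Ad(u_\sigma)\circ \pi_R = \pi_{\sigma_{\TLJ}(R)}\circ\sigma_F$ gives $\varphi_{\sigma_{\TLJ}(R)}(\sigma_F(g)) = \langle \pi_R(g)\Omega_R, \Omega_R\rangle = \varphi_R(g)$; then tracking the identification $\sigma_{\TLJ}(R_{a,b}) \leftrightarrow R_{b,a}$ through the rotation step yields $\varphi^\delta_{b,a}\circ\sigma_F = \varphi^\delta_{a,b}$, and swapping $(a,b)\leftrightarrow(b,a)$ gives the claimed identity.

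The main obstacle I anticipate is purely bookkeeping: one must be careful that Proposition \ref{abvsba} is stated with $\sigma_{\TLJ}(R)$ appearing on the right, and since $\sigma_{\TLJ}$ fixes $R_{a,b}$ the naive reading would give the \emph{wrong} conclusion $\pi_{a,b}\circ\sigma_F \cong \pi_{a,b}$. The resolution — which is the genuine content here — is that the $a \leftrightarrow b$ swap comes from the $90^\circ$ \emph{rotation} of the $2$-box, not from the vertical-reflection $\sigma_{\TLJ}$, and one needs the fact (from the earlier propositions on automorphisms/symmetries of $\cP$ preserving the unitary class of $\pi_R$) that this rotation step is invisible at the level of representations. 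Once the correct composite symmetry $\sigma_{\TLJ}$-then-rotation is identified as carrying $R_{a,b}$ to $R_{b,a}$, the proof is a short diagram chase and the authors' phrase ``immediate consequence of Proposition \ref{abvsba}'' is justified.
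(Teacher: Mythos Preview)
There is a genuine gap in the first paragraph, and it propagates through the rest. You compute $\sigma_{\TLJ}(R_1)$ and $\sigma_{\TLJ}(R_2)$ by thinking of $R_1 = \vert\,\vert$ and $R_2 = {}^\cup_\cap$ in their $(2,2)$-box incarnation, where each is indeed invariant under a vertical reflection. But that is not how $R$ enters the construction of $\Phi_R$, nor how Proposition \ref{abvsba} uses it. In the functor $\Phi_R$ the element $R$ is used as the morphism $R_1^3 \in \Rec(\TLJ)(1,3)$, i.e.\ as a rectangular diagram with one boundary point on the bottom and three on the top. There are exactly two TLJ basis diagrams in $\Rec(\TLJ)(1,3)$: the one with the through-string on the left and a cup joining the two right-hand top points, and the one with the cup on the left and the through-string on the right. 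These are precisely the $(1,3)$ incarnations of $R_1$ and $R_2$, and a vertical reflection manifestly \emph{swaps} them. Hence $\sigma_{\TLJ}(R_{a,b}) = R_{b,a}$ already at this level, and Proposition \ref{abvsba} gives $\pi^\delta_{a,b} \cong \pi^\delta_{b,a}\circ\sigma_F$ together with $u_\sigma\Omega_{R_{a,b}} = \Omega_{R_{b,a}}$, from which both assertions of Proposition \ref{exchab} follow in one line. The key identity $\Phi_{\sigma_{\TLJ}(R)}(\sigma(f)) = \sigma_{\TLJ}(\Phi_R(f))$ in the proof of Proposition \ref{abvsba} only makes sense with this $(1,3)$ interpretation of $\sigma_{\TLJ}(R)$.

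Your detour through a $90^\circ$ rotation of the $2$-box is therefore unnecessary, and it does not actually work as written: the one-click rotation on $\cP_2^+$ is not a planar-algebra automorphism in the sense of the earlier proposition (in a shaded planar algebra it exchanges shadings and does not commute with all tangles), so you cannot invoke that proposition to conclude that rotation leaves the unitary class of $\pi_R$ unchanged. Once you correct the computation of $\sigma_{\TLJ}(R_{a,b})$ in the $(1,3)$ picture, the whole second half of your argument collapses to the one-line deduction the paper intends.
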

In particular, for
 any   $\delta$ as above, there exists only one class $\pi^\delta_+:=\pi^\delta_{a,a}$ of unitary representations of $F$ with $a=b$, that can be obtained by taking $a= 1/\big(2(\delta + 1)\big)^{1/2}$. 
By symmetry, the class of $\pi^\delta_+$ is not changed if we compose it with  $\sigma_F$.
Likewise, there exists only one class $\pi^\delta_-$ with $a=-b$,  that can be obtained by taking $a= 1/\big(2(\delta - 1)\big)^{1/2}$, again still invariant under $\sigma_F$ up to unitary equivalence.
For the time being, it is unclear whether $\pi^\delta_+$ and $\pi^\delta_-$ are unitarily equivalent.

\medskip
As one can expect, apart from the trivial representation, all the other representations have trivial kernel.
\begin{proposition}
The representation $\pi_{a,b}^\delta$ is faithful for any $(a,b)\in \CC^\delta$ with $a\neq 0\neq b$.
\end{proposition}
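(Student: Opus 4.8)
The plan is to show that the kernel of $\pi_{a,b}^\delta$ is a normal subgroup of $F$ that cannot be all of $F$ (since we already know $\pi_{a,b}^\delta$ is not the trivial representation when $a \neq 0 \neq b$, e.g.\ by Proposition \ref{phix0x1} which gives $\varphi(x_0) = 1 + |ab|^2(1-\delta^2) \neq 1$ whenever $\delta \neq 1$ and $ab \neq 0$), and then invoke the well-known fact recalled in the Preliminaries that every proper quotient of $F$ is abelian. Thus $\ker \pi_{a,b}^\delta$ is either trivial or contains the commutator subgroup $[F,F]$. So the whole problem reduces to ruling out the second possibility: one must show $\pi_{a,b}^\delta$ does not factor through the abelianization $F/[F,F] \cong \Z^2$.

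First I would suppose for contradiction that $[F,F] \subseteq \ker \pi_{a,b}^\delta$. Then $\pi_{a,b}^\delta$ factors through $\Z^2 = \langle \bar x_0 \rangle \times \langle \bar x_1 \rangle$ (using that $x_0$ and $x_1$ generate $F$ and their images generate the abelianization), and in particular $x_1 x_0$ and $x_0 x_1$ would have the same image in the quotient, forcing $\varphi(x_1 x_0) = \varphi(x_0 x_1)$. Now I invoke Proposition \ref{phix0x1}: the difference is
$$\varphi(x_0 x_1) - \varphi(x_1 x_0) = |a^2 b|^2 (\delta^2-1)^2\left(\frac{\delta}{\delta^2-1} - |b|^2\right),$$
and since $a \neq 0 \neq b$ and $\delta \neq 1$, this vanishes only if $|b|^2 = \delta/(\delta^2 - 1)$. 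So the generic case ($|b|^2 \neq \delta/(\delta^2-1)$) is immediate, and it remains to handle the exceptional locus $|b|^2 = \delta/(\delta^2-1)$.

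On that exceptional locus I would produce a second obstruction: if $\pi_{a,b}^\delta$ factored through $\Z^2$ then $\varphi$ restricted to $\langle x_0 \rangle \cong \Z$ would be a positive-definite function on $\Z$, and more importantly the shift-invariance of $\varphi$ (Proposition \ref{prop:non-regular}, valid since $\TLJ$ is irreducible) combined with factoring through the abelian quotient would force strong constraints — for instance $\varphi(\phi^n(g))$ depends only on the image of $g$ in $\Z^2$, but $\phi_F$ does \emph{not} descend to the abelianization in a way compatible with this, since $\phi(x_0) = x_1$ while higher shifts $\phi^n(x_0) = x_n$ all have distinct images in $F/[F,F]$ only for $n \le 1$ — more cleanly, one evaluates $\varphi$ on a commutator such as $[x_0, x_2] = x_0 x_2 x_0^{-1} x_2^{-1}$ or on another pair of group elements with equal abelianization (e.g.\ $x_0^2 x_1$ versus appropriate rearrangements) and shows the values differ using the same diagrammatic calculus as in the proof of Proposition \ref{phix0x1}. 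The cleanest route is probably: on the locus $|b|^2 = \delta/(\delta^2-1)$, compute $\varphi$ on two elements representing the same class in $\Z^2$ but built from longer trees, and check the TLJ-diagram evaluation gives distinct scalars.

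The main obstacle is the exceptional locus $|b|^2 = \delta/(\delta^2-1)$: on the complement, a one-line comparison of $\varphi(x_0x_1)$ and $\varphi(x_1x_0)$ via Proposition \ref{phix0x1} suffices, but on this curve one genuinely needs a fresh computation with a new pair of group elements, and one must check that the chosen pair really does distinguish the representation for \emph{all} remaining parameter values (including the discrete $\delta$'s coming from $2\cos(\pi/n)$). I expect this to be a short but not entirely mechanical diagram chase, entirely parallel to the computation already carried out in the proof of Proposition \ref{phix0x1}, and I would present it as such rather than re-deriving the planar-algebra simplification rules from scratch.
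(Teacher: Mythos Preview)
Your overall strategy matches the paper's exactly: assume the kernel is non-trivial, use the fact that every proper quotient of $F$ is abelian to force $\varphi(x_0x_1)=\varphi(x_1x_0)$, and deduce from Proposition~\ref{phix0x1} that $|b|^2=\delta/(\delta^2-1)$. The divergence is in how the exceptional locus is dispatched.

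The paper does \emph{not} perform a new diagram chase. Instead it observes that if the quotient is abelian then also $\varphi\big(\sigma_F(x_0x_1)\big)=\varphi\big(\sigma_F(x_1x_0)\big)$, and by Proposition~\ref{exchab} one has $\varphi^\delta_{a,b}\circ\sigma_F=\varphi^\delta_{b,a}$. Hence the \emph{same} formula from Proposition~\ref{phix0x1}, with the roles of $a$ and $b$ exchanged, yields $|a|^2=\delta/(\delta^2-1)$. One then checks that $|a|^2=|b|^2=\delta/(\delta^2-1)$ is incompatible with the normalization $\delta(|a|^2+|b|^2)+2\mathrm{Re}(\bar a b)=1$: the left-hand side is at least $\tfrac{2\delta^2}{\delta^2-1}-\tfrac{2\delta}{\delta^2-1}=\tfrac{2\delta}{\delta+1}>1$ since $\delta>1$.

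Your proposed route through a fresh computation with longer trees would presumably work, but you have not carried it out, and your aside about the shift $\phi_F$ and the abelianization is muddled (in fact $x_n\equiv x_1$ in $F/[F,F]$ for all $n\ge 1$, so shift-invariance of $\varphi$ gives no new constraint there). The symmetry trick via $\sigma_F$ is both shorter and avoids any new planar-algebra calculation; it is the idea you are missing.
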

\begin{proof}
If $\pi_{a,b}^\delta$ had a non-trivial kernel then, by \cite[Theorem 4.3]{CFP96}, the quotient group by this kernel would necessarily be abelian and thus $\varphi(xy)=\varphi(yx)$ for all $x,y\in F.$ 
However, thanks to  Proposition \ref{phix0x1}, the equality $\varphi(x_0 x_1) = \varphi(x_1 x_0)$ leads to
$|b|^2 = \frac{\delta}{\delta^2 -1}$ and, similarly, $\varphi \big(\sigma_F(x_0 x_1)\big) = \varphi\big(\sigma_F(x_1 x_0)\big)$ leads to 
$|a|^2 = \frac{\delta}{\delta^2 -1}$, which are easily seen to be incompatible with the normalization condition and the fact that $\delta>1.$
\end{proof}

\subsection{Matrix coefficients which tend to zero}
Throughout the rest of this section we consider $\delta\in \{2 \cos(\pi/k) \ | \ k = 4, 5, 6, \ldots \} \cup [2,+\infty)$ as above and a {\it real} couple $(a,b)\in\CC^\delta$ such that $a\neq 0\neq b$.
Note that $\CC_\R^\delta:=\CC^\delta \cap \R^2$ is an ellipse whose axes are rotated by $\pi/4$ in the plane, with semi-major and semi-minor axes given by $1/\sqrt{\delta-1}$ and $1/ \sqrt{\delta + 1}$, respectively.
Set $\CP:=\TLJ(\delta)$ the TLJ-planar algebra with loop parameter $\delta$ and consider $R:=R_{a,b}$ that is a normalized element of $\cP_1$.

We will show that there exists a sequence $\ga_n\in F$ such that $\lim_n\varphi(\ga_n)=0$ 
when $a$, $b$ are real.
In fact, the sequence $(\ga_n)_n$ does not depend on the choice of the parameters $(\delta,a,b)$.

Consider the pair of trees $s_n,t_n\in \cF(1,n)$ for $n\geq 2$ defined as follows.
Let $s_2$ be the unique tree with one root and two leaves and  $s_{n+1}=f_{n}\circ s_n$ where $f_n$ is the unique forest with $n$ roots and $n+1$ leaves such that its $n$-th root is connected to its $n$-th and $n+1$-th leaves.
We define $t_n$ as the reflection of $s_n$ with respect to a vertical axis.
For example,
$$s_4=\DiagS \ ,  \text{ } t_4 = \DiagT.$$
Consider the Thompson's group element $\ga_n:=\frac{s_n}{t_n}$.
Note that $\ga_n=x_0^{n-1}$ for the usual presentation of Thompson's group $F$, see Section \ref{sec:JonesRep}.

\begin{proposition}\label{prop:coefzero}
We have that $\lim_n\varphi(\ga_n)=0$.
\end{proposition}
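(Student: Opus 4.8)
The plan is to derive a fixed linear recursion for the sequence $v_n:=\varphi(\ga_n)$ and to show that its characteristic roots lie in the open unit disc.

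First I would make the quantity explicit diagrammatically. By the equality $\varphi(\tfrac ts)=\langle\Phi(s),\Phi(t)\rangle$ one has $v_n=\Phi(s_n)^\dagger\circ\Phi(t_n)$, which is a scalar since $\TLJ_1^+\cong\C$. Recall that $\Phi(s_n)\in\Rec(\TLJ)(1,2n-1)$ is the ``staircase'' obtained by grafting $n-1$ copies of the morphism $R^3_1=aD_1+bD_2$ along the rightmost strand, where $D_1$ routes the single input to the leftmost of the three outputs and caps the remaining two outputs together, and $D_2$ is the mirror diagram; $\Phi(t_n)$ is the analogous left staircase. Thus $\Phi(s_{n+1})=(\id\bu R^3_1)\circ\Phi(s_n)$ and $\Phi(t_{n+1})=(R^3_1\bu\id)\circ\Phi(t_n)$, so that passing from $v_n$ to $v_{n+1}$ amounts to inserting one further copy of $R^3_1$ into each of the two staircases.

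Next I would organise the computation as a transfer matrix. Composing $\Phi(s_n)$ with $\Phi(t_n)^\dagger$ produces a single planar $(1,1)$--diagram in which the two staircases interlock, antiparallel, into a ladder with $O(n)$ rungs. Expanding every $R^3_1$ over the basis $\{D_1,D_2\}$ and reading the ladder rung by rung, recording at each rung only the boundedly many non-crossing configurations of the $O(1)$ strands crossing it, one obtains an expression $v_n=\langle\mathcal T^{\,n-2}\eta_0,\eta_1\rangle$, where $\mathcal T$ is a fixed linear operator on a fixed finite--dimensional space $V$ and $\eta_0,\eta_1\in V$ are fixed vectors, all depending only on $(\delta,a,b)$; the entries of $\mathcal T$ are explicit monomials in $a,b$ times powers of $\delta$, with one factor $\delta$ per closed loop created by a cap--cup matching. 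Equivalently, $(v_n)_{n\ge2}$ satisfies a linear recursion of bounded order with coefficients that are fixed polynomials in $\delta,a,b$; it is pinned down by its first few values $v_2,v_3,\dots$, each computable exactly by the graphical rules used in Proposition \ref{phix0x1} (for instance one finds $\varphi(x_0)=1+a^2b^2(1-\delta^2)$).

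Finally I would prove the decay. Since $a,b$ are real and non-zero, $\delta>1$, and $\delta(a^2+b^2)+2ab=1$, the inequality $a^2+b^2\ge 2|ab|$ forces $0<|ab|\le\tfrac1{2(\delta+1)}$ when $ab>0$ and $0<|ab|\le\tfrac1{2(\delta-1)}$ when $ab<0$; in particular $0<a^2b^2(\delta^2-1)<2$, so $|\varphi(x_0)|=|1-a^2b^2(\delta^2-1)|<1$. Using these bounds I would estimate the norm of $\mathcal T$ on the cyclic subspace generated by $\eta_0$ — or, equivalently, apply the Schur--Cohn criterion to the companion polynomial of the recursion — to conclude that the spectral radius of $\mathcal T$ there is strictly below $1$, whence $v_n=\langle\mathcal T^{\,n-2}\eta_0,\eta_1\rangle\to 0$. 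As $\mathcal T,\eta_0,\eta_1$ and all the estimates are independent of $n$, this yields $\lim_n\varphi(\ga_n)=0$, and the very same sequence $(\ga_n)_n$ works for every admissible triple $(\delta,a,b)$. The delicate points are in these last two steps: writing down $\mathcal T$ requires a careful (if elementary) bookkeeping of how the two staircases mesh, and — the genuine obstacle — the spectral bound must be made uniform and must persist at the extreme parameters $a=\pm b$ (the representations $\pi^\delta_{\pm}$), where the inequalities on $|ab|$ are saturated and the decay rate comes closest to $1$; one must also dispose by hand of the small values of $n$, for which the two staircases overlap and the ``bulk'' ladder picture has not yet set in.
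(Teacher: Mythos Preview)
Your strategy --- a linear transfer-matrix recursion for $\varphi(\gamma_n)$ followed by a spectral-radius bound --- is exactly the paper's. The gap is that you stop at the step that carries all the content: you write ``I would estimate the norm of $\mathcal T$\dots or apply the Schur--Cohn criterion'', call the spectral bound ``the genuine obstacle'', and leave it. A naive norm estimate will not close this: the transfer matrix is not a contraction in general (at $a=b$, $\delta=\sqrt2$ the dominant eigenvalue is about $0.94$ while the operator norm exceeds $1$), so one must engage with the characteristic polynomial directly.

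The paper does this as follows. Rather than an abstract ``finite-dimensional $V$'', one introduces a \emph{single} auxiliary scalar $e_n$ (the same diagram as $d_n:=\varphi(\gamma_n)$ but with the topmost $R^*$-cap rerouted) and obtains an explicit $2\times2$ recursion $\bigl(\begin{smallmatrix}d_{n+1}\\ e_{n+1}\end{smallmatrix}\bigr)=A\bigl(\begin{smallmatrix}d_n\\ e_n\end{smallmatrix}\bigr)$ with $\mathrm{tr}\,A=1+ab$ and $\det A=ab\bigl(1+ab(\delta^2-1)\bigr)$. The eigenvalue bound then comes from two short algebraic facts, neither of which is a norm estimate: (i) using the normalisation one has the factorisation $\det A=x(1-x)$ with $x=a(a\delta+b)$, so whenever the discriminant $\Delta$ is negative (roots complex conjugate) one gets $|r_\pm|^2=\det A\in(0,1)$; (ii) when $\Delta\ge0$ and $\mathrm{tr}\,A\ge0$, the identity $\chi_A(1)=(ab)^2(\delta^2-1)>0$ together with $\mathrm{tr}\,A=1+ab<2$ forces both real roots into $(-1,1)$; and a one-line inequality shows $\mathrm{tr}\,A<0$ implies $\Delta<0$. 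There is no separate difficulty at $a=\pm b$ and no ``small-$n$'' boundary effect --- the two-term recursion holds from $n=2$ on and all inequalities are strict. In short: your outline is correct, but the proof lives in the explicit $2\times2$ reduction and the two identities $\det A=x(1-x)$, $\chi_A(1)=(ab)^2(\delta^2-1)$, which you have not supplied.
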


\begin{proof}
Since the TLJ-planar algebra is irreducible, we can identify the algebra of (1,1)-rectangular labelled tangles $\Rec(\CP)(1,1)$ with the algebra of complex numbers.
In particular, we obtain that $\varphi(\ga_n)=\Phi(s_n)^*\circ \Phi(t_n)$, that we denote by $d_n$.
We introduce a sequence of tangles $e_n\in \Rec(\TLJ(\delta))(1,1)\simeq \C$ that is derived from the sequence $(d_n)_n$ such that 
$$e_2=\Diagetwo, \quad e_3=\Diagethree, \text{ etc.}$$
By expanding the very top $R^*$-box in the diagram of $d_n$ and using \eqref{formula-semplificazioni} we obtain  
$$d_{n+1}= \bar b(b\delta +a) d_{n} + \bar a e_{n}.$$
Similarly, by expanding the top left $R$-box in the diagram of $e_n$ we obtain that
$$e_{n+1}=a(\bar a \delta + \bar b) e_n+ b d_{n+1}=(|a|^2\delta+a\overline{b}+\overline{a}b) e_n + |b|^2 (b\delta+a)d_n.$$
Therefore, it holds $Av_n=v_{n+1}$ where 
$$A:=  \left(\begin{matrix} 
\overline{b}(b\delta+a) & \overline{a}\\
|b|^2 (b\delta+a) & |a|^2\delta+a\overline{b}+\overline{a}b
\end{matrix}\right)
\text{ and } v_n=\begin{pmatrix} d_n\\ e_n\end{pmatrix}.$$
We obtain that $\varphi(\ga_{n+2})$ is the first component of the vector $A^n v_2$.

Denote by $$r_\pm=\frac{tr(A) \pm \sqrt \Delta}{2}$$ the roots of the characteristic polynomial $\chi_A(t)=t^2-tr(A) t+\det(A)$, and put $\Delta =(tr(A))^2-4\det(A)$ for its discriminant. 
Recall that we are considering only the real case and thus the curve $\CC_\R^\delta =\{(x,y)\in\R\times\R\; | \; \delta(x^2+y^2)+2xy = 1 \}.$
There exists an invertible matrix $S$ such that $A = S\left(\begin{matrix} 
r_+ & 0\\
0 & r_-
\end{matrix}\right)
 S^{-1}$
 or 
  $A = S\left(\begin{matrix} 
r_+ & 1\\
0 & r_+
\end{matrix}\right)
 S^{-1}$
and accordingly
$A^n= S\left(\begin{matrix} 
r_+^n &  0\\
0 & r_-^n
\end{matrix}\right) S^{-1}$
or 
$A^n= S\left(\begin{matrix} 
r_+^n & nr_+^{n-1}\\
0 & r_+^n
\end{matrix}\right) S^{-1}$
 for $n\geq 1$. 
This means that if $|r_\pm|<1$, then   $\lim_n A^nv_2=0$ and thus $\lim_n \varphi(\ga_n)=0.$	

\medskip
\noindent
{\bf Claim 1: If $\Delta<0$, then $\vert r_\pm\vert<1.$}\\
First of all we observe that $r_+=\overline{r_-}$, thus $\det(A)=r_+r_-=|r_+|^2=|r_-|^2\geq 0$. 
Without loss of generality we may suppose that $\det(A)\neq 0$ and that $a\in \R_+$, $a\geq |b|$. We see that
$$
\det(A) =ab(b\delta+a)(a\delta +b)=(b^2\delta+ab)(a^2\delta+ab)=x(1-x)
$$
where $x:=a(a\delta+b)>0$. 
Since $\det(A)>0$, we get that $0<1-x$, which in turn implies that $0<\det(A)<1$.
Therefore, $|r_\pm|<1$.

\medskip
\noindent
{\bf Claim 2: If $\Delta\geq 0$ and $tr(A)\geq 0$, then $\vert r_\pm\vert<1.$}\\
We claim that $\chi_A(1)>0$. Recall $tr(A)=1+ab$ and that
$$
\det(A)=ab(a\delta+b)(b\delta+a)=ab(ab\delta^2+1-ab)=ab(1+ab(\delta^2-1))\; .
$$
Then, $\chi_A(1)=1-(1+ab)+\det(A)=ab(1+ab(\delta^2-1)-1)=(ab)^2(\delta^2-1)>0$. 
Note that $ab\leq 1/2$ since $1=\delta(a^2+b^2)+2ab$ and thus $tr(A) = 1 + ab \leq 3/2 <2$ implying that $r_-=\frac{tr(A)-\sqrt \Delta}{2}\leq \frac{tr(A)}{2}<1.$
Since $tr(A)$ and $\Delta$ are non-negative we have that $r_-,r_+$ are real numbers such that $r_+=\vert r_+\vert \geqslant \vert r_-\vert.$
Moreover, $\chi_A(t)$ is negative if and only if $r_-\leq t\leq r_+$.
Since $r_- <1$ and $\chi_A(1)>0$ we necessarily have that $r_+<1$ and thus $\vert r_\pm\vert<1.$

\medskip
\noindent
{\bf Claim 3:
If $tr(A)<0$, then $\Delta<0$ implying that $|r_\pm|<1$ by Claim 1.}
Assume that $tr(A)<0$.
Since $tr(A) = 1 + ab$ we have that $ab<-1$ and thus $(ab)^2 > \vert ab \vert >1.$
Using the formula $\det(A) =  ab(1+ab(\delta^2-1))$ we obtain:
\begin{align*}
\Delta & = tr(A)^2 - 4\det(A) = 1 + (ab)^2 + 2ab - 4 ab(1+ab(\delta^2-1))\\
& \leq 1 - 2 ab - 3 (ab)^2 \text{ since } \delta^2\geq 2\\
& < 0 \text{ since } (ab)^2 > \vert ab \vert >1.
\end{align*}
This proves Claim 3.

Altogether, we obtain that $|r_\pm|<1$ for every cases implying that $\lim_n\varphi(\ga_n)=0.$
\end{proof}

We obtain the main result of this section.

\begin{theorem}\label{cor-AOV}
Let $(\pi,\scrH)$ be the unitary representation of $F$ constructed from the planar algebra $\TLJ(\delta)$ and a real couple $(a,b)\in \CC^\delta_\R$ with $a\neq 0\neq b.$
Then, for any $\varep>0$ and any finite dimensional subspace $\scrK\subset \scrH$ there exists $\gamma\in F$ such that 
$|\langle \pi_\gamma\eta, \zeta \rangle|<\varep \Vert \eta \Vert \Vert\zeta\Vert \text{ for any } \eta,\zeta\in \scrK.$
In particular, $\pi$ does not contain any finite dimensional subrepresentation.
\end{theorem}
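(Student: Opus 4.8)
The plan is to first reduce, by a density argument, to the case where the given finite-dimensional subspace $\scrK$ lies inside a single summand $\fH_{t_0}$, and then to exhibit an explicit element $\tilde\gamma_n\in F$ whose compression to $\fH_{t_0}$ is exactly $\varphi(\gamma_n)\cdot\mathrm{id}$, where $\gamma_n=s_n/t_n$ is the sequence of Proposition \ref{prop:coefzero}. Since $\varphi(\gamma_n)\to 0$, taking $n$ large enough will finish the proof (with $\gamma$ depending, of course, on $\scrK$ and $\varep$).

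For the reduction: the subspaces $\fH_t$, $t\in\fT$, have dense union in $\scrH$, and this union is directed, since $\fH_s\subseteq\fH_t$ whenever $s\le t$ in the directed set $\fT$. Hence, given an orthonormal basis $\zeta_1,\dots,\zeta_d$ of $\scrK$ and $\eta_0>0$, we may choose vectors $\zeta_1',\dots,\zeta_d'$ lying in one and the same $\fH_{t_0}$ with $\|\zeta_i-\zeta_i'\|<\eta_0$. A standard $\varep/3$-estimate (keeping track of the factor $\sqrt d$ relating the norm on $\scrK$ to the $\ell^1$-norm of coordinates) then shows that the theorem follows once we prove: for every $t_0\in\fT$ and $\varep'>0$ there is $\gamma\in F$ with $\|P_{\fH_{t_0}}\,\pi(\gamma)\,P_{\fH_{t_0}}\|<\varep'$.

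Now fix $t_0$ with $N$ leaves, write $\mathbf 1_k\in\cF(k,k)$ for the trivial forest on $k$ strands, and take $s_n,t_n\in\cF(1,n)$ as in Proposition \ref{prop:coefzero}. Set
$$\tilde\gamma_n:=\frac{(\mathbf 1_{N-1}\bullet s_n)\circ t_0}{(\mathbf 1_{N-1}\bullet t_n)\circ t_0}\in F,$$
the element obtained by grafting the tree-pair $(s_n,t_n)$ onto the $N$-th leaf of $t_0$. For $\xi_1,\xi_2\in\Phi(N)$, using $\pi\big(\tfrac{a}{b}\big)\tfrac{b}{\eta}=\tfrac{a}{\eta}$, functoriality of $\Phi$, and the fact that applying the tilde operation to $\mathbf 1_{N-1}\bullet t_n$ turns its first $N-1$ trivial trees into $2N-2$ straight strands, so that $\Phi(\mathbf 1_{N-1}\bullet t_n)=\mathbf 1_{2N-2}\bullet\Phi(t_n)$ (and likewise for $s_n$), one computes
$$\big\langle\pi(\tilde\gamma_n)(t_0,\xi_1),(t_0,\xi_2)\big\rangle=\big(\Phi(\mathbf 1_{N-1}\bullet s_n)\circ\xi_2\big)^\dag\circ\big(\Phi(\mathbf 1_{N-1}\bullet t_n)\circ\xi_1\big)=\xi_2^\dag\circ\big(\mathbf 1_{2N-2}\bullet(\Phi(s_n)^\dag\circ\Phi(t_n))\big)\circ\xi_1.$$
Since $\TLJ$ is irreducible, $\Phi(s_n)^\dag\circ\Phi(t_n)$ is an element of $\Rec(\TLJ)(1,1)=\C$, namely the scalar $\varphi(\gamma_n)$; hence $\mathbf 1_{2N-2}\bullet(\Phi(s_n)^\dag\circ\Phi(t_n))=\varphi(\gamma_n)\,\mathbf 1_{2N-1}$ and the right-hand side collapses to $\varphi(\gamma_n)\,\xi_2^\dag\circ\xi_1=\varphi(\gamma_n)\,\langle(t_0,\xi_1),(t_0,\xi_2)\rangle$. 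Therefore $P_{\fH_{t_0}}\pi(\tilde\gamma_n)P_{\fH_{t_0}}=\varphi(\gamma_n)\,\mathrm{id}_{\fH_{t_0}}$, whose operator norm $|\varphi(\gamma_n)|$ tends to $0$ by Proposition \ref{prop:coefzero}; taking $n$ large enough gives $\|P_{\fH_{t_0}}\pi(\tilde\gamma_n)P_{\fH_{t_0}}\|<\varep'$, and the reduction above then yields the statement for $\scrK$.

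The final assertion is then immediate: if $\scrK$ were a finite-dimensional subrepresentation, the theorem applied with $\varep=1/2$ would produce $\gamma\in F$ with $\|\pi(\gamma)|_{\scrK}\|=\|P_{\scrK}\pi(\gamma)P_{\scrK}\|<1/2$, contradicting that $\pi(\gamma)|_{\scrK}$ is unitary. I expect the only delicate points to be bookkeeping rather than anything deep: first, the diagrammatic identity $\Phi(\mathbf 1_{N-1}\bullet t_n)=\mathbf 1_{2N-2}\bullet\Phi(t_n)$ together with the ensuing factorization of the matrix coefficient through the scalar $\varphi(\gamma_n)$ — this is the step that genuinely uses the planar-algebra structure, specifically the irreducibility of $\TLJ$, i.e.\ the one-dimensionality of $\Rec(\TLJ)(1,1)$; and second, the elementary but slightly fiddly $\varep/3$-reduction to a single $\fH_{t_0}$, with its dimension-dependent constants. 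Everything else is a direct consequence of Proposition \ref{prop:coefzero}.
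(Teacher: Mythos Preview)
Your proof is correct and follows essentially the same approach as the paper's own proof: both reduce by density to a single $\fH_{t_0}$, graft the tree-pair $(s_n,t_n)$ of Proposition~\ref{prop:coefzero} onto a leaf of $t_0$, and use the irreducibility of $\TLJ$ to factor the matrix coefficient as $\varphi(\gamma_n)\langle\eta,\zeta\rangle$. The only cosmetic difference is that you graft onto the rightmost leaf (with $N-1$ trivial strands on the left) while the paper grafts onto the leftmost leaf; both work for the same reason, and your explicit identification $P_{\fH_{t_0}}\pi(\tilde\gamma_n)P_{\fH_{t_0}}=\varphi(\gamma_n)\,\mathrm{id}$ is a clean way to phrase the key step.
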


\begin{proof}
We claim that for any $\varep>0$ and any $n\geq 1$ there exists $m\geq n$ and forests $p,q\in \cF(n,m)$ such that $\Phi(p)^*\circ \Phi(q) = c \id_{\CP_{2n-1}}$ with $|c|<\varep$.

Consider the sequence of elements $\gamma_k\equiv  x_0^k=\frac{s_k}{t_k}$ with trees $s_k, t_k$ as in Proposition \ref{prop:coefzero}.
Set 
$$
p_k= s_k \bullet |^{\bullet n-1} \text{ and } q_k= t_k \bullet |^{\bullet n-1}.
$$
We have that $\Phi(p_k), \Phi(q_k)\in \cF(\tilde n, 2\tilde n-1)$, where $\tilde n:=2n-1.$
Then
$$
\Phi(q_k)^*\circ \Phi(p_k)= \Phi(g_k)^* \circ\Phi(f_k) \bullet \vert^{\bullet \tilde n-1} = \langle \pi_{\gamma_k} \Omega, \Omega\rangle \; \vert^{\bullet \tilde n}=\varphi(\ga_k) \; \vert^{\bullet \tilde n}\; ,
$$
where we used the fact that 
$\dim\CP_1=1$ and hence, for all $x \in \CP_1$,
$$
\begin{tikzpicture}[baseline=-.1cm]
\draw (1.5,0.75)--(2,0.75)--(2,1.25)--(1.5,1.25)--(1.5,0.75);\node at (1.75,1) { $x$ };
\draw (1.75,1.25)--(1.75,1.75);
\draw (1.75,.75)--(1.75,.25);
\node at (2.75,1) { $=\tau_l(x)$ };
\draw (3.75,.25)--(3.75,1.75); 
\end{tikzpicture}
$$
$\tau_l$ being the normalized left trace of the planar algebra $\CP$.
This proves the claim since the sequence $(\varphi(\ga_k))_k$ tends to zero by Proposition \ref{prop:coefzero}.

We first prove the statement when $\scrK$ is equal to the $t$-subspace $\fH_t$ with $t$ a given tree.
Set $\gamma:= \frac{p t}{q t}\in F$ such that $\Phi(p)^*\circ \Phi(p) = c\id$ with $|c|<\varep$ and consider $\eta=\frac{t}{v}, \zeta=\frac{t}{w}\in \fH_t.$
We obtain
$$
|\langle\pi_{\gamma}\eta, \eta\rangle |=  |\langle \Phi(q)v , \Phi(p)w\rangle | = |c||\langle \eta , \zeta\rangle|<\varep \Vert \eta\Vert \Vert\zeta\Vert.$$ 

Now, fix $0<\varep <1$ and a finite dimensional subspace $\scrK\subset \scrH.$
Let $\eta_1,\cdots, \eta_k$ be an orthonormal basis of $\scrK.$
By density there exists a tree $t$ and a collection of unit vectors $\eta_1', \cdots , \eta_k'$ in $\fH_t$ such that $\Vert \eta_i - \eta_i'\Vert <\varep/k$ for any $1\leq i \leq k.$
By our previous argument, there exists $\gamma\in F$ such that $|\langle \pi_\gamma\eta, \zeta \rangle|<\varep \Vert \eta \Vert \Vert\zeta\Vert \text{ for any } \eta,\zeta\in \fH_t.$
Consider some unit vectors $\eta,\zeta\in \scrK$ and  expand them in the given orthonormal basis, namely $\eta = \sum_{i=1}^k a_i \eta_i$ and $\zeta = \sum_{j=1}^k b_j \eta_j$.
Put $\eta' = \sum_{i=1}^k a_i \eta_i'$ and $\zeta' = \sum_{j=1}^k b_j \eta_j'$  in $\fH_t$.
We observe  that $\Vert \eta - \eta'\Vert, \Vert \zeta - \zeta'\Vert <\varep$ and thus $\Vert \eta'\Vert, \Vert \zeta'\Vert < 1 + \varep.$
It follows that 
\begin{align*}
|\langle \pi_\gamma\eta, \zeta \rangle| & \leq |\langle \pi_\gamma(\eta-\eta'), \zeta \rangle| + |\langle \pi_\gamma\eta', (\zeta - \zeta')  \rangle| + |\langle \pi_\gamma\eta', \zeta' \rangle|\\
& \leq \Vert \eta - \eta'\Vert + \Vert \eta'\Vert  \Vert \zeta - \zeta'\Vert + \varep \Vert \eta'\Vert \Vert \zeta'\Vert \\
& \leq \varep + \varep(1+\varep) + \varep(1+\varep)^2 <7\varep.
\end{align*}
This concludes the proof of the first statement of the theorem.

Assume that $\scrK$ is the carrier space of a finite dimensional subrepresentation of $\pi.$
By the first assertion there exists $\gamma\in F$ such that $|\langle \pi_\gamma\eta, \zeta \rangle|<1/2 \Vert \eta \Vert \Vert\zeta\Vert$ for any $\eta,\zeta\in \scrK.$
If $\eta$ is a unit vector of $\scrK$, we have that $\sup\{ |\langle \pi_\gamma\eta, \zeta \rangle| : \zeta \in \scrK \text{ unit vector } \}=1$ which contradicts the previous inequality.
\end{proof}

In particular, we see that $\pi$ does not contain any one-dimensional representation and thus it does not contain the trivial representation $\pi_{1/\sqrt\delta,0}^\delta$, see Proposition \ref{prop:pitrivial}.

\begin{remark}\label{rem:infiniterep}
Let $\delta$ be as above. If $(a_t,b_t)$ is a path in $\CC^\delta_\R$ parametrized $0\leq t\leq 1$ and approaching $(1/\sqrt \delta,0)$ when $t$ tends to zero, then the net $\pi_t:=\pi_{a_t , b_t}^\delta$ tends to the trivial representation in  the Fell topology.
Therefore, $1_F$ is weakly contained in the direct sum $\oplus_{t_j} \pi_{t_j}$ for any countable subsequence $(t_j)_j$ tending to zero.
If the space of equivalence classes of $\pi_t$ is finite, then we can find a sequence $t_j$ tending to zero such that each $\pi_{t_j}$ is unitary equivalent to a single representation $\pi=\pi_{a,b}^\delta$ with $a\neq 0 \neq b.$
Then $1_F$ is weakly contained in the infinite direct sum of $\pi$ and thus $1_F$ is weakly contained in $\pi.$
Therefore, if $1_F$ is not weakly contained in $\pi_{a,b}^\delta$, then there are infinitely many pairwise non-unitarily equivalent representations in the class of Jones representations $\{\pi^\delta_{a,b} : (a,b)\in\CC^\delta_\R\}$.
\end{remark}

\subsection{Matrix coefficients which do not tend to zero}\label{sec:notzero}

We show that for any vector state and choice of real parameters $a,b$ there exists a sequence of group elements tending to infinity such that the corresponding coefficients do not tend to zero.
We start by showing that the vacuum vector state does not vanish everywhere outside the identity when $a,b$ are reals, which entails that none of the
associated representations is contained in a multiple of the left regular representation of $F$.

\begin{proposition}\label{prop:phix_0}
Consider a real pair $(a,b)\in\CC^\delta_\R$ and its associated representation $\pi:=\pi^\delta_{a,b}$ and vector state $\varphi:=\varphi^\delta_{a,b}.$
Then, 
we have that $\pi\not\subset \lambda^{\oplus \infty}.$
\end{proposition}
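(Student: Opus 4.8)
The plan is to reduce the whole statement to Proposition~\ref{prop:non-regular}: since $\pi$ is (by construction) the cyclic representation generated by the vacuum $\Omega$, it is enough to produce a \emph{single} non-trivial element $h\in F$ with $\varphi(h)\neq 0$, and then Proposition~\ref{prop:non-regular} immediately gives $\pi\not\subset\lambda^{\oplus\infty}$. Equivalently, the only obstruction would be $\varphi\equiv\mathbf{1}_{\{e\}}$: indeed $\varphi$ is shift-invariant and $\phi^n(g)\to\infty$ for every non-trivial $g$, so if no such $h$ existed then $\varphi(g)=\varphi(\phi^n(g))$ would have to vanish for all $g\neq e$.

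First I would test $h=x_0$. By Proposition~\ref{phix0x1}, and since $a,b$ are real, $\varphi(x_0)=1+a^2b^2(1-\delta^2)$, which is non-zero unless $a^2b^2=\tfrac{1}{\delta^2-1}$; in particular the degenerate cases $a=0$ or $b=0$ (where $\pi$ is trivial by Proposition~\ref{prop:pitrivial}) fall here, with $\varphi(x_0)=1$. So assume $a^2b^2=\tfrac{1}{\delta^2-1}$, hence $a,b\neq 0$, and then again by Proposition~\ref{phix0x1} one has $\varphi(x_1x_0)=\varphi(x_0)^2=0$, so that
$$\varphi(x_0x_1)=a^4b^2(\delta^2-1)^2\Big(\tfrac{\delta}{\delta^2-1}-b^2\Big).$$
Since $a^4b^2(\delta^2-1)^2>0$, this is non-zero unless $b^2=\tfrac{\delta}{\delta^2-1}$, and if it is non-zero I would take $h=x_0x_1$, which is non-trivial.

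It remains to handle the case $\varphi(x_0)=\varphi(x_0x_1)=0$, and this is where I expect the only genuine subtlety. Combining $a^2b^2=\tfrac{1}{\delta^2-1}$ with $b^2=\tfrac{\delta}{\delta^2-1}$ forces $a^2=\tfrac1\delta$; feeding this into the normalization identity $\delta(a^2+b^2)+2ab=1$ gives $ab=-\tfrac{\delta^2}{2(\delta^2-1)}$, and squaring this against $a^2b^2=\tfrac{1}{\delta^2-1}$ yields $(\delta^2-2)^2=0$, i.e.\ $\delta=\sqrt2$, with $(a,b)$ then determined up to the sign and coordinate-flip symmetries of $\CC^\delta_\R$. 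That pair is exactly the chromatic (rescaled Jones--Wenzl) choice, or its image under $\sigma_F$ via Proposition~\ref{exchab}. For this parameter Jones's theorem, recalled in the Introduction, identifies $\pi$ with the quasi-regular representation $\lambda_{F/\vec F}$ (respectively $\lambda_{F/\sigma_F(\vec F)}$); as $\vec F\cong F_3$ is non-trivial, the vacuum coefficient equals $\mathbf{1}_{\vec F}$ (resp.\ $\mathbf{1}_{\sigma_F(\vec F)}$), which is not supported on $\{e\}$, so any non-trivial element of that subgroup serves as $h$.

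The main obstacle is precisely this last case: once $\varphi(x_0)=\varphi(x_0x_1)=0$ a purely computational argument stalls, and one is forced to import the structural fact that at $\delta=\sqrt2$ the chromatic representation is quasi-regular with non-trivial stabilizer of the vacuum. If one wished to keep the argument self-contained, the alternative would be to extend the computation of Proposition~\ref{phix0x1} by one more step, evaluating $\varphi$ on a slightly longer word in $x_0,x_1$ and checking non-vanishing for this single leftover triple $(\delta,a,b)$.
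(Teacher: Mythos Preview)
Your argument is correct and follows the same overall strategy as the paper: reduce to Proposition~\ref{prop:non-regular} by exhibiting a non-trivial $h$ with $\varphi(h)\neq0$, test $h=x_0$ first, and then deal with the residual $\delta=\sqrt2$ case. The details differ in two places. First, the paper shows that $\varphi(x_0)=0$ \emph{alone} already forces $\delta=\sqrt2$: writing $b=\pm(a\sqrt{\delta^2-1})^{-1}$ and feeding this into the normalization equation, the positive sign yields a quadratic in $a^2$ with negative discriminant, while the negative sign forces $\delta=\sqrt2$. You instead impose the extra condition $\varphi(x_0x_1)=0$ and reach $\delta=\sqrt2$ by a different (and slightly cleaner) elimination. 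Second, for the leftover case the paper stays computational: it invokes the chromatic-polynomial formula (Proposition~\ref{prop-chr-stab}) to get $\varphi(x_0x_1)=1$ directly, rather than appealing to Jones's quasi-regular identification $\pi\simeq\lambda_{F/\vec F}$ as you do. Your route is perfectly valid but imports a heavier structural fact; the paper's is more self-contained. One small imprecision: your residual constraints $a^2=1/\sqrt2$, $b^2=\sqrt2$ single out only the $\sigma_F$-image of the chromatic choice (the chromatic choice itself has $a^2=\sqrt2$, $b^2=1/\sqrt2$ and already satisfies $\varphi(x_0x_1)=1\neq0$, so it is caught at your previous step).
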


\begin{proof}
By Proposition \ref{prop:non-regular} it is enough to exhibit an element $g\in F$ such that $\varphi(g)\neq 0$.
In all but one case the element is the generator $x_0$.
Indeed,
assume that $\varphi(x_0)=0.$
Proposition \ref{phix0x1} tells us that $\varphi(x_0) = 1 + |ab|^2(1 - \delta^2)$
implying that $a\neq 0\neq b$.
We obtain that $b=\frac{\pm 1}{a\sqrt{\delta^2 - 1}}$. If $b=\frac{1}{a\sqrt{\delta^2 - 1}}$, then the normalization condition gives us
$$\Big(a^2 + \frac{1}{a^2(\delta^2-1)} \Big) \delta + \frac{2}{\sqrt{\delta^2 - 1}} = 1.$$
If $X:=a^2$, then we obtain the equation:
$$P(X):=\delta X^2 + \frac{ 2 - \sqrt{\delta^2 - 1}}{ \sqrt{\delta^2 - 1} } X + \frac{\delta}{\delta^2-1}=0.$$
But the discriminant of $P$ is equal to $-\frac{3(\delta^2 -1) + 4 \sqrt{\delta^2 - 1} }{ \delta^2 - 1 }<0.$
Hence, $P$ does not have any real root, a contradiction.\\
On the other hand, if $b=\frac{- 1}{a\sqrt{\delta^2 - 1}}$, by means of similar computations and also using the constraints on the values of $\delta$,    one can show that $\varphi(x_0)=0$ precisely when $\delta=\sqrt 2$ and $a=2^{1/4}$, $b=-2^{- 1/4}$ or $a=2^{- 1/4}$, $b=-2^{1/4}$.  
In the former case, 
we have 
that one half of $\varphi(g)$ coincides with the chromatic polynomial of $\Gamma(g)$ evaluated at $2$ for all $g\in F$  (cf.~Prop \ref{prop-chr-stab}) and thus $\varphi(x_0x_1)=1$. 
In the latter case we get $\varphi(\sigma_F(x_0x_1))=1$, see Propositions \ref{exchab} and \ref{phase}.
 \end{proof}

Consider the constant $C:=\sup_{\kappa\in F\setminus e} |\varphi(\kappa)|$ and recall that it is strictly positive by Proposition \ref{prop:phix_0}.

\begin{proposition}\label{prop:sequ-c}
For any vectors $\eta,\zeta\in \scrH$ we have the inequality:
$$\limsup_{\gamma\in F} | \langle \pi_\gamma \eta , \zeta \rangle | \geq C \sup_{\theta\in F} | \langle \pi_\theta \eta , \zeta \rangle | .$$
\end{proposition}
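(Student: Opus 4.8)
The plan is to exploit the shift-invariance of the vacuum state $\varphi$ established in Proposition \ref{prop:non-regular} together with the explicit description of the representation as an inductive limit of the spaces $\fH_t$. First I would reduce to the case where $\eta$ and $\zeta$ live in a common finite level $\fH_t$: by density and continuity of all the quantities involved, it suffices to prove the inequality for $\eta,\zeta\in\fH_t$ for an arbitrary fixed tree $t$, and then pass to the limit. So fix such a tree $t$ with, say, $n$ leaves, and fix $\theta=\frac{u}{v}\in F$; the goal is to produce a sequence $\gamma_k\in F$ escaping to infinity with $|\langle\pi_{\gamma_k}\eta,\zeta\rangle|\to C\,|\langle\pi_\theta\eta,\zeta\rangle|$ (or at least $\limsup \geq C\,|\langle\pi_\theta\eta,\zeta\rangle|$, and then sup over $\theta$).

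The key idea is to combine $\theta$ with a long power $x_0^k$ sitting on the right, using the ``block'' structure: for $g\in F$ and a group element $\kappa$ realized as $\frac{s_\kappa}{t_\kappa}$ one can form, after passing to a common tree where $g$ acts only on the first few strands, the element that applies $\kappa$ to a disjoint block of strands lying to the right of the support of $g$ on a suitable subdivision. Concretely, I would write $\theta$ using a representative on a tree large enough to contain $t$, and then build $\gamma_k$ by inserting (via the shift or a disjoint $\bullet$-concatenation, exactly as in the proof of Theorem \ref{cor-AOV} where $p_k=s_k\bullet|^{\bullet n-1}$) a copy of $x_0^k$ acting on fresh strands. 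Because $\eta,\zeta\in\fH_t$ and the extra strands are disjoint from the region where $\eta,\zeta$ are supported, the matrix coefficient factorizes: $\langle\pi_{\gamma_k}\eta,\zeta\rangle = \varphi(\ga_k)\cdot\langle\pi_\theta\eta,\zeta\rangle$ plus possibly a correction, using the one-dimensionality of $\CP_1$ exactly as in the displayed ``$\tau_l$'' identity in the proof of Theorem \ref{cor-AOV}. Here $\ga_k$ is a sequence of nontrivial elements of $F$; since $C=\sup_{\kappa\neq e}|\varphi(\kappa)|$, one can choose the $\ga_k$ so that $|\varphi(\ga_k)|\to C$, and since each $\ga_k$ is nontrivial, applying $\phi^{m}$ (the shift) if necessary — which preserves $\varphi(\ga_k)$ by Proposition \ref{prop:non-regular} while pushing $\ga_k$, hence $\gamma_k$, to infinity — we arrange $\gamma_k\to\infty$ in $F$. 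This yields $\limsup_{\gamma\in F}|\langle\pi_\gamma\eta,\zeta\rangle|\geq C\,|\langle\pi_\theta\eta,\zeta\rangle|$, and taking the supremum over $\theta\in F$ gives the claim.

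The main obstacle I anticipate is the bookkeeping needed to make the factorization $\langle\pi_{\gamma_k}\eta,\zeta\rangle = \varphi(\ga_k)\langle\pi_\theta\eta,\zeta\rangle$ genuinely exact rather than approximate: one has to choose the representative trees for $\theta$ and the insertion point for the $x_0^k$-block so that the new strands really are ``planar-algebraically disjoint'' from the strands carrying $\eta$ and $\zeta$, so that the diagram computing the inner product splits as a tensor-type product of a scalar coming from the $\ga_k$-block (equal to $\varphi(\ga_k)$ by the $\tau_l$ collapse, using $\dim\CP_1^+=1$) and the diagram for $\langle\pi_\theta\eta,\zeta\rangle$. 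This is the same mechanism already used successfully in the proof of Theorem \ref{cor-AOV}, so I would model the argument closely on that computation; the only genuinely new input is that instead of driving the scalar to $0$ we drive it to (a value arbitrarily close to) the supremum $C$, and that we carry along an arbitrary $\theta$ on the ``occupied'' block. A secondary point to be careful about is that after the disjoint insertion, $\gamma_k$ must be checked to tend to infinity in $F$; this is handled by noting that the reduced tree-pair representing $\gamma_k$ has at least as many leaves as that of $\ga_k$ (which grows), or alternatively by post-composing with a high shift $\phi^m$, which changes neither $\varphi$ nor the relevant matrix coefficient on $\fH_t$ for $m$ large but forces divergence, exactly as in Proposition \ref{prop:non-regular}.
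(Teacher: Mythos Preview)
Your proposal is correct and follows essentially the same route as the paper: reduce by density to vectors in a common $\fH_t$, pick $\alpha\neq e$ with $|\varphi(\alpha)|$ close to $C$, insert it (via horizontal concatenation with trivial trees) on strands disjoint from those carrying $\eta,\zeta$, and use $\dim\cP_1^+=1$ to get the exact factorization $\langle\pi_{\gamma_i\beta}\eta,\zeta\rangle=\varphi(\alpha_i)\langle\pi_\beta\eta,\zeta\rangle$, while the shift $\alpha\mapsto\alpha_i$ pushes the resulting elements to infinity without changing $|\varphi|$. The only cosmetic differences are that the paper fixes the near-optimal $\beta$ first and then approximates $\pi_\beta\eta,\zeta$ into a single $\fH_t$ (rather than approximating $\eta,\zeta$ first), and inserts the $\alpha$-block on the left rather than the right; neither affects the argument.
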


\begin{proof}
Consider unit vectors $\eta,\zeta\in \scrH$ and $\varep>0$.
We fix $\beta\in F$ such that $ | \langle \pi_\beta \eta , \zeta \rangle |\geq \sup_{\theta\in F} | \langle \pi_\theta \eta , \zeta \rangle |-\varep.$
By density we can assume that there exists a tree $t$ with $n$ leaves such that $\pi_\beta\eta$ and $\zeta$ belong to $\fH_t.$
Fix $\alpha\in F\setminus e$ such that $|\varphi(\alpha)| \geq C-\varep$ and consider some trees $s,r$ such that $\alpha =\frac{r}{s}.$
Denote by $\alpha_i$ the $i$-th shift of $\alpha$ and consider some trees $s_i,r_i$ satisfying $\alpha_i =\frac{r_i}{s_i}.$
Put $r_{i,n}:=r_i\bu |^{\bu n-1}$ the forest with $r_i$ and $n-1$ trivial trees on its right and $\ga_i=\frac{r_{i,n}\circ t}{ s_{i,n}\circ t}\in F$ where $s_{i,n}$ is defined similarly to $r_{i,n}.$
Observe that
\begin{align*}
| \langle \pi_{ \gamma_i \beta} \eta , \zeta \rangle | & = | \langle \Phi(s_{i,n}) \pi_\beta \eta , \Phi( r_{i,n} ) \zeta \rangle |\\
& = | \langle ([\Phi(r_{i})^*\circ \Phi(s_i)]\bullet |^{\bullet n-1})  (\pi_\beta \eta) ,  \zeta \rangle |\\
& = |\varphi(\alpha_i) | |\langle \pi_\beta \eta , \zeta\rangle|\\
& = |\varphi(\alpha) | |\langle \pi_\beta \eta , \zeta\rangle|\\
& \geq (C-\varep ) (\sup_{\theta\in F} | \langle \pi_\theta \eta , \zeta \rangle | - \varep )\\
& \geq C \sup_{\theta\in F} | \langle \pi_\theta \eta , \zeta \rangle | - 2\varep. \\
\end{align*}
Since $\alpha\neq e$ we have that $\alpha_i$ tends to infinity which implies that $\gamma_i\beta$ tends to infinity.
We obtain that $\limsup_{\gamma\in F} | \langle \pi_\gamma \eta , \zeta \rangle | \geq C \sup_{\theta\in F} | \langle \pi_\theta \eta , \zeta \rangle | - 2\varep$ for any $\varep>0$ which finishes the proof.
\end{proof}

The left regular representation of an infinite group satisfies that $\lim_n\psi(g_n)=0$ for any vector state $\psi$ and any sequence $(g_n)_n$ which goes to infinity.
This fact together with Proposition \ref{prop:sequ-c} imply the following.

\begin{theorem}\label{theo:lambdanotinpi}
For any   of $a,b\in \CC^\delta_\R$  the left regular representation of $F$ is not contained inside $\pi_{a,b}^\delta$.
Moreover, $\pi_{a,b}^\delta$ does not admit any coefficient vanishing at infinity.
\end{theorem}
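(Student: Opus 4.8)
\emph{Proof strategy.} The plan is to extract both assertions from Proposition~\ref{prop:sequ-c} together with the strict positivity of the constant $C=\sup_{\kappa\in F\setminus e}|\varphi(\kappa)|$ provided by Proposition~\ref{prop:phix_0}, using only the classical fact that every matrix coefficient of the left regular representation of an infinite discrete group is a $c_0$-function. First I would record that input: for $\xi,\eta\in\ell^2(F)$ the function $\gamma\mapsto\langle\lambda_\gamma\xi,\eta\rangle$ lies in $c_0(F)$, which is immediate when $\xi$ and $\eta$ are finitely supported (the coefficient is then finitely supported on $F$) and follows in general by density together with the uniform estimate $\|\lambda_\gamma\|=1$; the same conclusion holds for any amplification $\lambda^{\oplus\kappa}$.

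For the second assertion I would argue as follows. Let $\eta,\zeta\in\scrH$ be such that the coefficient $\gamma\mapsto\langle\pi_\gamma\eta,\zeta\rangle$ vanishes at infinity, that is, $\limsup_{\gamma\in F}|\langle\pi_\gamma\eta,\zeta\rangle|=0$. Proposition~\ref{prop:sequ-c} then forces $C\sup_{\theta\in F}|\langle\pi_\theta\eta,\zeta\rangle|=0$, and since $C>0$ by Proposition~\ref{prop:phix_0} we obtain $\langle\pi_\theta\eta,\zeta\rangle=0$ for every $\theta\in F$, i.e.\ the coefficient was identically zero. Hence $\pi^\delta_{a,b}$ admits no non-zero matrix coefficient vanishing at infinity.

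The first assertion I would then deduce by contradiction. If $\lambda$ were contained in $\pi^\delta_{a,b}$, choose a $\pi^\delta_{a,b}$-invariant subspace $\scrK\subset\scrH$ on which $\pi^\delta_{a,b}$ is unitarily equivalent to $\lambda$, and pick a unit vector $\eta\in\scrK$; then $\gamma\mapsto\langle\pi_\gamma\eta,\eta\rangle$ is a diagonal coefficient of $\lambda$, hence lies in $c_0(F)$, while it equals $\|\eta\|^2=1$ at the identity --- contradicting the second assertion. I do not expect a genuine obstacle at this stage: the substantive work has already been carried out in Proposition~\ref{prop:sequ-c}, whose proof transfers the supremum of a coefficient into its limit superior by shifting a fixed non-trivial element arbitrarily far out, and in Proposition~\ref{prop:phix_0}, whose case analysis --- including the exceptional branch at $\delta=\sqrt2$, where instead $\varphi(x_0x_1)=1$ --- guarantees $C>0$. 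The theorem is then a short formal consequence of these two facts.
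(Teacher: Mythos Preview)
Your proposal is correct and follows essentially the same approach as the paper: the paper's proof is just the one-line observation that the classical $c_0$ property of coefficients of $\lambda$, combined with Proposition~\ref{prop:sequ-c} (and the positivity of $C$ from Proposition~\ref{prop:phix_0}), yields both assertions. You have simply supplied the details that the paper leaves implicit, including the correct reading of the second assertion as ``no non-zero coefficient vanishes at infinity.''
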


Note that the proof of Proposition \ref{prop:sequ-c} still works for any representation $\pi$ arising from an irreducible subfactor planar algebra.
Furthermore, if this representation satisfies $\sup_{\kappa\in F\setminus e} |\varphi(\kappa)| > 0$ then we also have the conclusion of Theorem \ref{theo:lambdanotinpi}.

\begin{remark}
In \cite{DuMe14} it is shown that the only finite factor representations of $F$ are (the multiples of) the regular representation and one dimensional representations given by characters on the quotient by the commutator subgroup of $F$.
Therefore, Theorems \ref{cor-AOV} and \ref{theo:lambdanotinpi} imply that the representations $\pi^\delta_{a,b}$, with $a,b$ non-zero real numbers and any $\delta$, do not contain any finite factor subrepresentations.
\end{remark}

In the next section we will consider one case where the representation $\pi^\delta_{a,b}$ is known to be irreducible and thus of type I$_\infty$.

\section{Two representations that are not unitarily equivalent}\label{JonesSubgroup}\label{sec-two-reps}

Consider the loop parameter $\delta=\sqrt 2$ and the unitary representation $(\pi,\scrH)$ associated to the normalized Jones-Wenzl idempotent $R=2^{1/4} R_1 -2^{-1/4} R_2$.
The subgroup of $F$ that fixes the vacuum vector $\Omega$ in this representation is the Jones subgroup $\vec F$ introduced in \cite{Jo14}.
Consider the involution $\sigma:t\to 1-t$ of $[0,1]$ and its associated group isomorphism $\sigma_F:F\to F, g\mapsto \sigma g \sigma$.
We write $\tilde \pi:= \pi\circ \sigma_F$ and note that $\tilde \pi$ is unitary equivalent to $\pi_{\tilde R}$ with $\tilde R:=-2^{-1/4} R_1 + 2^{1/4} R_2$ by Remark \ref{exchab}.
The aim of this section is to show that $\pi$ and $\tilde \pi$ are not unitary equivalent.
We expect that the family of unitary representations $\{\pi^\delta_{a,b}, \delta,a,b\}$ contains infinitely many classes of unitary representations but so far $\pi$ and $\tilde \pi$ are the 
first nontrivial representations that we are able to distinguish directly up to unitary equivalence.
This also implies that $\pi$ and $\tilde\pi$ cannot be unitary equivalent to any representation $\pi^\delta_{a,a}$ or $\pi^\delta_{a,-a}$ i.e.~when $a=b$ or $a=-b$

Jones showed that the representation $\pi$ is unitary equivalent to the quasi-regular representation $\la_{F/\vec F}:F\act \ell^2(F/\vec F)$ implying that $\tilde \pi$ and $\la_{F/\sigma_F(\vec F)}$ are unitary equivalent.

Consider two subgroups $A,B<G$.
They are called commensurable if $A\cap B$ has finite index both in $A$ and in $B$.
Moreover, they are called quasi-conjugate if there exists $g\in G$ such that $A$ and $gBg^{-1}$ are commensurable.
Finally, recall that the commensurator $\Comm_G(A)$ is a subgroup of $G$ equal to the set of $g\in G$ such that $A\cap gAg^{-1}$ has finite index both in $A$ and in $gAg^{-1}$.
The following theorem is due to Mackey, see \cite{Mackey-book76} or \cite{Burger-delaHarpe97}.

\begin{theorem}
If $\Comm_G(A)=A$ and $\Comm_G(B)=B$, then the quasi-regular representations $\la_{G/A}$ and $\la_{G/B}$ are unitary equivalent if and only if $A$ and $B$ are quasi-conjugate.
\end{theorem}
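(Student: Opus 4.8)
The plan is to describe all bounded $G$-equivariant operators $\ell^2(G/B)\to\ell^2(G/A)$ by the classical ``invariant kernel'' argument, and to combine this with the elementary fact that commensurable subgroups of $G$ have the same commensurator.

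\textbf{The easy direction.} Suppose first that $A$ and $B$ are quasi-conjugate, say $A$ is commensurable with $gBg^{-1}$ for some $g\in G$. Since commensurable subgroups have the same commensurator and $\Comm_G(gBg^{-1})=g\,\Comm_G(B)\,g^{-1}=gBg^{-1}$, we obtain $A=\Comm_G(A)=\Comm_G(gBg^{-1})=gBg^{-1}$; thus under our hypotheses quasi-conjugacy already forces honest conjugacy. Then $x(gBg^{-1})\mapsto xgB$ is a well-defined $G$-equivariant bijection $G/A\to G/B$, so $\lambda_{G/A}\cong\lambda_{G/B}$. (Only $\Comm_G(A)=A$ is used here.)

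\textbf{The converse.} Assume $\lambda_{G/A}\cong\lambda_{G/B}$ and fix a nonzero bounded intertwiner $T\colon\ell^2(G/B)\to\ell^2(G/A)$; I would encode $T$ by its matrix $K(c,y):=\langle T\delta_y,\delta_c\rangle$, $c\in G/A$, $y\in G/B$. The intertwining relation is exactly the statement that $K(hc,hy)=K(c,y)$ for all $h\in G$, i.e. that $K$ is constant on the orbits of the diagonal $G$-action on $G/A\times G/B$; these orbits are indexed by the double cosets $A\backslash G/B$, the orbit $O_g$ of $(eA,gB)$ meeting each ``row'' in a set of cardinality $[A:A\cap gBg^{-1}]$ and each ``column'' in a set of cardinality $[gBg^{-1}:A\cap gBg^{-1}]$. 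Since $T$ and $T^{*}$ are bounded, for each $c$ the $c$-th row $y\mapsto K(c,y)$ lies in $\ell^2(G/B)$ and for each $y$ the $y$-th column $c\mapsto K(c,y)$ lies in $\ell^2(G/A)$. As $K\not\equiv 0$ it takes a nonzero constant value on some orbit $O_g$; that value being attained along an entire row and an entire column of $O_g$, square-summability forces both to be finite. Hence $[A:A\cap gBg^{-1}]<\infty$ and $[gBg^{-1}:A\cap gBg^{-1}]<\infty$, i.e. $A$ and $gBg^{-1}$ are commensurable, so $A$ and $B$ are quasi-conjugate.

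\textbf{The main obstacle.} I expect the only delicate step to be the very end of the converse. A naive Frobenius-reciprocity computation identifies (a dense subspace of) the intertwiner space with the $A$-fixed vectors in $\ell^2(G/B)$, and a nonzero such vector yields only the \emph{one-sided} finiteness $[A:A\cap gBg^{-1}]<\infty$ for some $g$; it is the boundedness of $T$, manifested as $\ell^2$-membership of the rows \emph{and} columns of its kernel, that upgrades this to genuine commensurability. Everything else is routine: the $G$-set bijection in the easy direction, the orbit/double-coset bookkeeping, and the permanence properties of commensurators. (As an alternative to invoking boundedness one could observe that $\Comm_G(A)=A$ and $\Comm_G(B)=B$ make $\lambda_{G/A}$ and $\lambda_{G/B}$ irreducible by Mackey's irreducibility criterion, so that a nonzero intertwiner is automatically a scalar multiple of a unitary; but the kernel argument seems the most self-contained.)
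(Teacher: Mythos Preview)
The paper does not prove this theorem; it is stated as a classical result due to Mackey, with references to \cite{Mackey-book76} and \cite{Burger-delaHarpe97}, so there is no ``paper's own proof'' to compare against.

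Your argument is correct and is essentially the standard one (it is the proof in Burger--de la Harpe): encode a nonzero intertwiner by its $G$-invariant kernel on $G/A\times G/B$, observe that the diagonal $G$-orbits are parametrised by $A\backslash G/B$, and use square-summability of rows and columns to force both indices $[A:A\cap gBg^{-1}]$ and $[gBg^{-1}:A\cap gBg^{-1}]$ to be finite on any orbit where the kernel is nonzero. One small inaccuracy: in your ``easy direction'' you actually use \emph{both} commensurator hypotheses, not just $\Comm_G(A)=A$. From $\Comm_G(A)=A$ and commensurability alone you only obtain $gBg^{-1}\subset\Comm_G(gBg^{-1})=\Comm_G(A)=A$, i.e.\ a finite-index inclusion; the reverse inclusion $A\subset gBg^{-1}$ needs $\Comm_G(gBg^{-1})=gBg^{-1}$, which is where $\Comm_G(B)=B$ enters. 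This does not affect the validity of the proof, only your parenthetical remark.
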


Golan and Sapir proved that the commensurator $\Comm_F(\vec F)$ is equal to $\vec F$ \cite[Corollary 3]{GS17}
(and thus $\vec{F}$ is not almost normal in $F$). 
This implies that $\Comm_F(\sigma_F(\vec F))=\sigma_F(\vec F)$ as well.
Therefore, the representations $\pi$ and $\tilde \pi$ are unitary equivalent if and only if $\vec F$ and $\sigma_F(\vec F)$ are quasi-conjugate in $F$.

We recall a characterization of $\vec F$ due to Golan and Sapir \cite{GS17}.
For any dyadic rational $t\in (0,1)$ there exists a unique $n\geq 0$ and $a_1,\cdots,a_n\in \{0,1\}$ such that $t= \sum_{i=1}^n \frac{a_i}{2^i} + \frac{1}{2^{n+1}}$.
We write $t= .a_1\cdots a_n 1$.
Define $S$ as the set of dyadic rationals $t=.a_1\cdots a_n 1$ such that the set of $1\leq i \leq n$ satisfying $a_i=1$ is even.

\begin{theorem}\cite[Theorem 2]{GS17}
The subgroup $\vec F$ is the stabilizer of $S$ for the usual action $F\act [0,1]$,
i.e.~$\vec F = \{ g\in F : gS = S\}.$
\end{theorem}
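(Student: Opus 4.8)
The plan is to recast the condition $gS=S$ as a combinatorial condition on tree-pair diagrams and then to identify the resulting subgroup with $\vec F$. For a dyadic rational $t=.a_1\cdots a_n1\in(0,1)$ set $p(t):=a_1+\cdots+a_n\bmod 2$, so that $S=p^{-1}(0)$; since $F$ acts on the interior dyadic rationals of $[0,1]$ by bijections and $p$ is two-valued, $gS=S$ is equivalent to $p\circ g=p$ everywhere. One then computes how $g$ moves $p$: writing $g$ by a pair of standard dyadic partitions, if $g$ maps the $k$-th interval $I_{w_k}=[.w_k,.w_k+2^{-|w_k|}]$ of the domain partition affinely and orientation-preservingly onto the $k$-th interval $I_{v_k}$ of the range partition, then an interior dyadic point of $I_{w_k}$ is of the form $.w_k u1$ and is sent to $.v_k u1$, so across $I_{w_k}$ the function $p$ is shifted by the constant $c(w_k)+c(v_k)\bmod 2$, where $c(w):=(\text{number of }1\text{'s in }w)\bmod 2$ is the natural $2$-colouring of finite binary words, equivalently of standard dyadic intervals. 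Because $c(w0)=c(w)$ and $c(w1)=c(w)+1$, the ``balance'' condition $c(w_k)=c(v_k)$ is insensitive to further subdividing intervals, hence is independent of the diagram chosen for $g$; and a short check at the partition points shows it is also exactly what is needed for $p\circ g=p$ there. Thus
\[
\{g\in F:\ gS=S\}=\{\,g\in F:\ c(w_k)=c(v_k)\text{ for all }k\text{ in any tree-pair diagram of }g\,\}=:F_c .
\]

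For the inclusion $\vec F\subseteq F_c$ I would use that $\vec F$ is generated by the elements $x_ix_{i+1}$, $i\ge 0$ (this underlies the isomorphism $\vec F\cong F_3$ of \cite{GS17}) together with $x_ix_{i+1}=\phi^i(x_0x_1)$, where $\phi$ is the shift of $F$. Since $F_c$ is literally the stabiliser of $S$, hence a subgroup, it suffices to check that $x_0x_1\in F_c$ --- a direct inspection of its four-leaf diagram --- and that $F_c$ is $\phi$-invariant. The latter is immediate from the explicit form of the shift: $\phi(g)$ is the identity on $[0,1/2]$ and a rescaled copy of $g$ on $[1/2,1]$, and for $t=\tfrac12+\tfrac s2\in(1/2,1)$ one finds $p(\phi(g)(t))-p(t)=p(g(s))-p(s)$, so $\phi(g)\in F_c$ if and only if $g\in F_c$.

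The harder inclusion $F_c\subseteq\vec F$ is the crux, and I would prove it by induction on the number $m$ of leaves of the reduced diagram of $g\in F_c$. For $m\le 3$ the only reduced balanced element is the identity --- for instance $x_0^{\pm1}$ fails the balance condition, as the local computation above makes visible --- so every non-trivial element of $F_c$ already has $m\ge 4$. For the inductive step one must show that any non-trivial $g\in F_c$ can be multiplied by some $(x_ix_{i+1})^{\pm1}$ so that the product is again balanced (automatic once $\vec F\subseteq F_c$ is known) and has strictly fewer leaves in its reduced diagram; iterating then writes $g$ as a word in the generators. Producing such a length-reducing move compatibly with the balance condition --- roughly, showing that the obstruction to cancelling a caret is exactly a colour mismatch, which balance excludes --- is the main obstacle, and is where one must track carefully how $c$ propagates through the carets on the two sides of the diagram.

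An alternative to the generator argument, closer to the planar-algebra viewpoint of the present paper, would be to invoke Jones' description $\vec F=\{g:\Gamma_g\text{ is bipartite}\}$ and to show that $\Gamma_g$ admits a proper $2$-colouring exactly when $g\in F_c$. The vertices of $\Gamma_g$ are the leaves of the diagram, and any proper $2$-colouring assigns to each leaf a colour that must be the same whether read off from the domain tree or from the range tree --- which is precisely $c(w_k)=c(v_k)$; conversely, when $g$ is balanced the leaf-colouring $k\mapsto c(w_k)$ is proper for $\Gamma_g$, using the way the edges of $\Gamma_g$ come from the carets. This trades the combinatorial induction for an analysis of $\Gamma_g$, but still requires knowing precisely which leaves $\Gamma_g$ joins.
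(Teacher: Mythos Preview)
This theorem is quoted from \cite{GS17} and is not proved in the present paper, so there is no in-paper argument to compare against; your proposal must stand on its own. The translation of $gS=S$ into the balance condition $c(w_k)=c(v_k)$ on tree-pair diagrams is correct and well explained, and the observation that this condition is stable under refinement is the right one.

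The gap is in the inclusion $F_c\subseteq\vec F$, and it is a real one in both outlines. In your first approach you assume $\vec F$ is generated by the $x_ix_{i+1}$, appealing to the isomorphism $\vec F\cong F_3$ of \cite{GS17}; but in Golan--Sapir that isomorphism and the accompanying generating set are obtained \emph{after}, and using, the very stabiliser characterisation you are proving, so this is circular. Even granting the generators, the length-reducing step --- showing that every non-trivial balanced $g$ admits some $(x_ix_{i+1})^{\pm1}$ whose product with $g$ has strictly fewer leaves in reduced form --- is precisely the combinatorial content of the theorem, and you have not carried it out; balance alone does not obviously guarantee that a caret can be cancelled by a single allowed move. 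Your alternative via Jones' bipartiteness criterion is logically cleaner, since that description of $\vec F$ is prior to \cite{GS17}, and it is in fact close to how Golan and Sapir argue. But it is also only sketched: you need the exact construction of $\Gamma_g$ (its $n$ vertices are the shaded regions of the pair-of-trees diagram, and each of the $2(n-1)$ carets contributes one edge joining the two regions it separates), and you must verify that the colouring induced by $c$ is well-defined on regions and proper for $\Gamma_g$ exactly when $c(w_k)=c(v_k)$ for all $k$. That verification is the proof, not a formality.
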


Let us characterize in a similar way our subgroup $\sigma_F(\vec F)<F$.

\begin{lemma}
The subgroup $\sigma_F(\vec F)<F$ is the stabilizer of $\sigma(S)$.
Moreover, $\sigma(S)$ is the set of $t=.a_1\cdots a_n 1$ with $n\geq 0, a_1,\cdots,a_n\in \{0,1\}$ such that the set of $1\leq i\leq n$ satisfying $a_i=0$ is even.
\end{lemma}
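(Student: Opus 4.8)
The plan is to deduce this lemma directly from the already-established facts about $\vec F$ and the behaviour of $\sigma_F$ under the "stabilizer of a subset of dyadic rationals" description. First I would recall that, by the preceding theorem of Golan--Sapir, $\vec F = \{g \in F : gS = S\}$. Since $\sigma_F(g) = \sigma g \sigma$ with $\sigma(t) = 1-t$, for any $g \in F$ we have $\sigma_F(g)(\sigma(S)) = \sigma g \sigma(\sigma(S)) = \sigma g(S)$, which equals $\sigma(S)$ precisely when $g(S) = S$, i.e. when $g \in \vec F$. Hence $\sigma_F(\vec F) = \{h \in F : h(\sigma(S)) = \sigma(S)\}$, which is the first assertion. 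This part is essentially a one-line conjugation argument and presents no obstacle.

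The second assertion is a purely combinatorial computation describing $\sigma(S)$ explicitly in terms of binary expansions. The key step is to understand how the map $t \mapsto 1-t$ acts on the finite binary string encoding $t = .a_1 \cdots a_n 1$. I would observe that if $t = .a_1 \cdots a_n 1$ in the notation above — meaning $t = \sum_{i=1}^n a_i 2^{-i} + 2^{-(n+1)}$ — then $1 - t$ has the terminating-with-$1$ expansion obtained by flipping each bit: $1 - t = .\,\bar a_1 \cdots \bar a_n 1$ where $\bar a_i = 1 - a_i$. This is the standard fact that complementing a dyadic rational in $(0,1)$ corresponds to taking the bitwise complement of its finite expansion (one checks $\sum_{i=1}^n \bar a_i 2^{-i} + 2^{-(n+1)} = \sum_{i=1}^{n+1} 2^{-i} - \sum_{i=1}^n a_i 2^{-i} = (1 - 2^{-(n+1)}) - (t - 2^{-(n+1)}) = 1 - t$).

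Then I would simply translate the membership condition through this bit-flip. By definition $t = .a_1 \cdots a_n 1 \in S$ iff $\#\{i : a_i = 1\}$ is even. Writing a general element of $\sigma(S)$ as $\sigma(t) = .\bar a_1 \cdots \bar a_n 1$ with $t \in S$, the condition "$\#\{i : a_i = 1\}$ even" becomes, in terms of the bits $\bar a_i$ of $\sigma(t)$, the condition "$\#\{i : \bar a_i = 0\}$ is even". Renaming the bits of a generic element of $\sigma(S)$ back to $a_1, \dots, a_n$ gives exactly the stated description: $\sigma(S) = \{\,.a_1 \cdots a_n 1 : \#\{1 \le i \le n : a_i = 0\} \text{ is even}\,\}$. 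I would also note the edge case $n = 0$ (the point $t = 1/2$), which is fixed by $\sigma$ and lies in $S$ since the empty set has an even number of $1$'s, and indeed has an even number — zero — of $0$'s, so it belongs to the claimed set as well; this keeps the $n \ge 0$ range honest. The only mild subtlety, and the one place to be careful, is ensuring the bitwise-complement identity for $1-t$ is stated for the correct (terminating-in-$1$) normal form used by Golan--Sapir rather than the other dyadic representation; once that is pinned down the rest is immediate.
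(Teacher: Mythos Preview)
Your proposal is correct and follows essentially the same approach as the paper: the key step in both is the bit-flip identity $\sigma(.a_1\cdots a_n 1)=.(1-a_1)\cdots(1-a_n)1$, verified by the same direct computation with the finite binary expansion, from which the description of $\sigma(S)$ is immediate. If anything you are more explicit than the paper, which does not spell out the conjugation argument for the first assertion or the $n=0$ case.
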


\begin{proof}
Consider a dyadic rational $t\in (0,1)$.
It can be written as $t= \sum_{i=1}^n \frac{a_i}{2^i} + \frac{1}{2^{n+1}}$ with $a_1,\cdots,a_n\in \{0,1\}$ and $n\geq 0$.
Observe that 
\begin{align*}
\sigma(t) & = 1-t = \sum_{j=1}^\infty \frac{1}{2^j} - \left( \sum_{i=1}^n \frac{a_i}{2^i} + \frac{1}{2^{n+1}} \right)\\
& = \sum_{i=1}^n \frac{1- a_i}{2^i} + \frac{0}{2^{n+1}} + \sum_{j=n+2}^\infty \frac{1}{2^j}\\
& = \sum_{i=1}^n \frac{1-a_i}{2^i} + \frac{1}{2^{n+1}}.
\end{align*}
Therefore, $\sigma(.a_1\cdots a_n 1) = . (1- a_1)\cdots ( 1 - a_n ) 1$ which implies the desired characterization of $\sigma_F(S)$.
\end{proof}

We need a technical lemma to show that $\vec F$ and $\sigma_F(\vec F)$ are not quasi-conjugate.

\begin{lemma}\label{lem:d}
There exists $d\in \vec F \cap \sigma_F(\vec F)$ satisfying $\lim_{n\to \infty} d^n(t)=0$ for every $t\in(0,1)$.
\end{lemma}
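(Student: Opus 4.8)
The plan is to write down an explicit element $d\in F$ fixing the endpoint $0$ with slope $>1$ at $0$ (so that $d^n(t)\to 0$ for all $t\in(0,1)$ by the usual contraction argument for piecewise-linear maps of the interval), and then check directly that $d$ stabilizes both $S$ and $\sigma(S)$. Recall from the Golan--Sapir characterizations recalled above that $g\in\vec F$ iff $gS=S$, where $S$ consists of the finite dyadic rationals $t=.a_1\cdots a_n1$ for which $\#\{i\le n: a_i=1\}$ is even, and $g\in\sigma_F(\vec F)$ iff $g\sigma(S)=\sigma(S)$, where $\sigma(S)$ is obtained by the digit-flip $a_i\mapsto 1-a_i$. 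A natural candidate is $x_0^{2}$, or more precisely an element whose tree-pair description only involves caret-moves of \emph{even} depth, so that both parity conditions are respected; the cleanest choice is to take the element of $\vec F$ given in \cite{GS17} as a generator of $\vec F\cong F_3$ which happens to move $0$, i.e. the image of the standard generator $x_0$ of $F_3$ under the isomorphism $F_3\cong\vec F$.

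First I would recall that $\vec F\cong F_3$ (the Brown--Thompson group on ternary trees) and that under this isomorphism the natural ``first generator'' acts on $[0,1]$ as a piecewise-linear homeomorphism fixing $0$ with right-slope $>1$ there; call it $d$. Since $d\in\vec F$ we already have $dS=S$. The content is then to verify $d\in\sigma_F(\vec F)$, equivalently $d\sigma(S)=\sigma(S)$. For this I would use the involution $\sigma$ itself: a direct computation shows $\sigma(S)=S\,\triangle\,\{.a_1\cdots a_n1 : n \text{ odd}\}$ is \emph{not} quite $S$, so the two conditions are genuinely different, and the verification must use the specific combinatorial shape of $d$. The key point is that the element $d$ can be chosen so that its defining tree-pair $(t,s)$ has all leaves at even depth (a ``balanced at even levels'' move), and for such an element the action on dyadic expansions modifies digits only in pairs $(a_{2k-1},a_{2k})$, hence preserves simultaneously the parity of $\#\{i:a_i=1\}$ \emph{and} the parity of $\#\{i:a_i=0\}$ among the first $n$ digits. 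Concretely I would pick $d=\frac{t}{s}$ where $t$ is the left-vine ternary-type tree of depth $2$ and $s$ its mirror, exhibit its two or three breakpoints explicitly, and check on each linear piece that it sends a dyadic string $.w1$ to $.w'1$ with $w'$ obtained from $w$ by a transformation acting blockwise on length-$2$ blocks.

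Then I would finish by the standard dynamical argument: $d$ fixes $0$, $d(0)=0$ with $d'(0^+)=2^{-k}$ for some $k\ge 1$ (slope less than $1$ to the right of $0$, so $d$ is a contraction near $0$), and $d$ has no other fixed point in $(0,1)$ — this can be read off the explicit breakpoint data, or arranged by choosing $d$ to be a single ``basic'' element of $\vec F\cap\sigma_F(\vec F)$. Hence for every $t\in(0,1)$ the orbit $d^n(t)$ is eventually trapped in the interval $[0,\varepsilon)$ on which $d$ is linear with slope $<1$, and therefore $d^n(t)\to 0$. The main obstacle I anticipate is purely bookkeeping: producing an element that is \emph{simultaneously} in $\vec F$ and $\sigma_F(\vec F)$ and \emph{also} moves $0$ toward $0$ — the ``even-depth blocks'' idea is what reconciles the two parity constraints, and writing the digit-level action of the chosen $d$ cleanly enough to see both parities preserved is the step that needs care; everything after that is the routine contraction lemma.
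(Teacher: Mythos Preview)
Your overall strategy is viable and differs from the paper's. The paper simply writes down an explicit five-piece element
\[
d(t)= \begin{cases}
.000a & \text{if } t = .0a\\
.0010a & \text{if } t= .10a\\
.0011a & \text{if } t= .1100a\\
.01a & \text{if } t= .1101a\\
.1a & \text{if } t=.111a
\end{cases}
\]
and checks by hand, piece by piece, that each prefix substitution preserves the parity of the number of $1$'s and the parity of the number of $0$'s; note that this $d$ has leaves at \emph{odd} depths (the first and last pieces), so the paper does not use your even-depth idea at all. Your observation is the cleaner one: if both trees in the pair have every leaf at even depth, then for each piece the source prefix $w$ and target prefix $w'$ both have even length, so $\#\{i:w_i=0\}\equiv\#\{i:w_i=1\}\pmod 2$ and likewise for $w'$; hence preserving the $1$-parity (i.e.\ $d\in\vec F$) automatically gives preservation of the $0$-parity (i.e.\ $d\in\sigma_F(\vec F)$). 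That is correct and buys you the second membership for free.

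However, the proposal has real gaps. First, you never actually write down the element. The phrase ``left-vine ternary-type tree of depth $2$'' is too vague, and your earlier suggestion to take the image of the standard $F_3$-generator under the Golan--Sapir isomorphism $F_3\cong\vec F$ requires you to verify concretely that this image has all leaves at even binary depth \emph{and} that it contracts toward $0$; neither is checked. (Caution: the natural $\vec F$ element $c=(x_0x_1)^{-1}$, which the paper uses elsewhere, has a leaf at depth~$1$ and is explicitly shown \emph{not} to lie in $\sigma_F(\vec F)$, so ``a generator of $\vec F$'' is not automatically good.) Second, your description ``modifies digits only in pairs $(a_{2k-1},a_{2k})$'' is not what happens: the action replaces an even-length prefix by another even-length prefix, possibly of different length. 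The correct statement is the parity equivalence above; as written, your sentence suggests even depth alone forces both parities to be preserved, which is false. Third, there is a contradiction: you first ask for slope $>1$ at $0$, then for slope $<1$. For $d^n(t)\to 0$ you need $d(t)<t$ on $(0,1)$, hence slope $<1$ to the right of $0$. Finally, you must also rule out interior fixed points, which you acknowledge but do not do.

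To salvage your approach, exhibit a concrete pair of binary trees with all leaves at even depth, check that each prefix substitution preserves the $1$-parity (so $d\in\vec F$, hence by your observation $d\in\sigma_F(\vec F)$), and verify $d(t)<t$ on $(0,1)$ by inspecting the breakpoints. This is no more work than the paper's direct check, and arguably more transparent.
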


\begin{proof}
Consider the pair of trees 
$$s:= (\vert \bu Y\bu \vert\bu \vert )\circ (Y\bu \vert\bu \vert )\circ (Y\bu \vert)\circ Y \text{ and } t:= (\vert\bu \vert \bu Y \bu \vert)\circ (\vert\bu \vert \bu Y) \circ (\vert \bu Y)\circ Y$$
This defines an element of the Thompson's group $g=\frac{t}{s}\in F$.
It sends the standard dyadic partition $\{ [0,1/2) , [1/2,3/4) , [3/4, 13/16) , [13/16, 7/8), [7/8,1]\}$ onto \\$\{ [0,1/8) , [1/8,3/16) , [3/16, 1/4), [1/4,1/2) , [1/2,1]\}$.
It can be defined by its action on dyadic rationals $t\in (0,1)$ as follows:
$$d(t)= \begin{cases}
.000a \text{ if } t = .0a\\
.0010a \text{ if } t= .10a\\
.0011a \text{ if } t= .1100a\\
.01a \text{ if } t= .1101a\\
.1a \text{ if } t=.111a
\end{cases},$$
where $a$ is any sequence of $0$ and $1$ with finitely many $1$.
From this description it is easy to check that $d$ stabilizes both the sets $S$ and  $\sigma(S)$.
Moreover, we have that $\lim_{n\to\infty}d^n(t)=0$ for any $t\in (0,1).$
\end{proof}

We are now able to prove the main result of this section.
The proof follows a similar strategy developed by Golan and Sapir in \cite{GS17}.

\begin{theorem}
The subgroups $\vec F$ and $\sigma_F(\vec F)$ of $F$ are not quasi-conjugate.
In particular, the representations $\pi$ and $\tilde \pi$ are not unitary equivalent.
\end{theorem}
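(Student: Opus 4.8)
The plan is to exploit the reduction already in place. By Mackey's theorem together with the Golan--Sapir identities $\Comm_F(\vec F)=\vec F$ and $\Comm_F(\sigma_F(\vec F))=\sigma_F(\vec F)$, the representations $\pi$ and $\tilde\pi$ (being unitarily equivalent to $\la_{F/\vec F}$ and $\la_{F/\sigma_F(\vec F)}$) are unitarily equivalent if and only if $\vec F$ and $\sigma_F(\vec F)$ are quasi-conjugate in $F$. Hence the whole content is to rule out quasi-conjugacy, and the sentence about $\pi,\tilde\pi$ then follows immediately. So I would argue by contradiction: assume there is $g\in F$ such that $\vec F$ and $g\sigma_F(\vec F)g^{-1}$ are commensurable, and set $H:=\vec F\cap g\sigma_F(\vec F)g^{-1}$, a subgroup of finite index in both. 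Since $\vec F$ is the stabilizer of $S$ and $g\sigma_F(\vec F)g^{-1}$ is the stabilizer of $T:=g(\sigma(S))$, the group $H$ stabilizes both $S$ and $T$. Because $g$ is an orientation-preserving homeomorphism of $[0,1]$ it fixes $0$, so near $0$ it is linear with slope $2^{c}$ for some $c\in\Z$, and the germ of $T$ at $0$ is the image under $x\mapsto 2^{c}x$ of the germ of $\sigma(S)$ at $0$.

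The heart of the argument, following the Golan--Sapir strategy of \cite{GS17}, combines the contracting element with a combinatorial comparison of $S$ and $\sigma(S)$ near the point $0$. From Lemma \ref{lem:d} there is $d\in\vec F\cap\sigma_F(\vec F)$ with $\lim_n d^n(t)=0$ for every $t\in(0,1)$; since $H$ has finite index in $\vec F$, some power $d^{k}$ lies in $H$ and therefore stabilizes $T$, and likewise $g d g^{-1}\in g\sigma_F(\vec F)g^{-1}$ fixes $0$, contracts $(0,1)$ to $0$, and some power of it lies in $H\subseteq\vec F$ and stabilizes $S$. Now write a dyadic rational close to $0$ as $.0^{m}a$ with $a$ a finite word starting with $1$: membership of $.0^{m}a$ in $S$ depends only on the parity of the number of $1$'s in $a$, hence is independent of the length $m$ of the leading block of zeros, whereas membership in $\sigma(S)$ depends on the parity of $m$ plus the number of $0$'s in $a$, hence genuinely on $m\bmod 2$. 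Since $\sigma(S)\rightsquigarrow T$ only rescales by a fixed power of $2$, the germ of $T$ at $0$ retains this genuine dependence on $m\bmod 2$. Thus the germ of $S$ at $0$ is invariant under the local halving map, while the germ of $T$ at $0$ is only $2$-periodic under it, and in particular $T$ cannot coincide with $S$ (nor with the complement of $S$ among dyadics, which has the same halving-invariant germ).

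The remaining step, which I expect to be the main obstacle, is to turn ``$H$ has finite index in $\vec F$ and stabilizes $T$'' into ``$T$ essentially equals $S$''. For this I would use that $\vec F\cong F_3$ acts transitively on $S$ and on the dyadics of $(0,1)$ outside $S$, so a finite-index subgroup $H$ acts on each with only finitely many orbits; an $H$-invariant set of dyadics is then a union of finitely many such orbits, and exploiting the contracting elements $d^{k}$ and $gdg^{-1}$ inside $H$ (their orbits accumulate at $0$, so they propagate the local germ information at $0$ to a global statement) one forces $T$ to be $S$ or its complement among dyadics — contradicting the germ incompatibility at $0$ established above. This contradiction shows $\vec F$ and $\sigma_F(\vec F)$ are not quasi-conjugate, whence $\pi\not\cong\tilde\pi$. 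The germ computations near $0$ and the Mackey reduction are routine; the delicate point is precisely the orbit/transitivity input on finite-index subgroups of $\vec F$ and the use of the contracting element to globalize the obstruction, which is where the Golan--Sapir techniques are genuinely needed.
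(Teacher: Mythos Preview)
Your Mackey reduction and the germ computation near $0$ (membership of $.0^{m}a$ in $S$ is independent of $m$, while membership in $\sigma(S)$ flips with $m\bmod 2$) are correct and are indeed the right ingredients. The problem is the step you flag yourself as the main obstacle: from ``$H$ has finite index in $\vec F$ and stabilizes $T$'' to ``$T$ equals $S$ or its complement''. Your sketch relies on two unproven assertions --- that $\vec F$ acts transitively on $S$ and on its complement among dyadics, and that the contracting element lets you ``propagate the germ to a global statement'' --- neither of which you justify, and neither of which is immediate. Even granting finitely many $H$-orbits on the dyadics, nothing you have written prevents $T$ from being a nontrivial finite union of such orbits. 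As it stands, the argument does not close.

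The paper avoids this entirely by never attempting to characterize $T$. It shows directly that the relevant index is infinite, in two cases. For $g\in\vec F$ it exhibits the concrete element $c=(x_0x_1)^{-1}\in\vec F$ and checks that $c^r$ shifts the parity of the zero-count of certain dyadics, so no power of $c$ lies in $\sigma_F(\vec F)$. For $g\notin\vec F$ it uses the contracting $d\in\vec F\cap\sigma_F(\vec F)$ together with the specific Golan--Sapir result \cite[Lemma~4.14]{GS17}: for every $g\in F$ there is $\varepsilon>0$ with $g^{-1}(S\cap(0,\varepsilon))\subset S$. Picking $t\notin S$ with $g(t)\in S$, the orbit $d^{nr}(g(t))$ stays in $S$ and is eventually below $\varepsilon$, forcing $g^{-1}d^{nr}g(t)\in S$; since $t\notin S$, this means $g^{-1}d^{nr}g\notin\vec F$, contradicting $d^r\in g\vec Fg^{-1}$. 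The point is that Lemma~4.14 of \cite{GS17} is exactly the precise germ statement you need --- it tells you how $g^{-1}$ interacts with $S$ near $0$ --- and with it the argument is a few lines, with no need for transitivity or orbit-classification of $H$.
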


\begin{proof}
We start by proving that the index $I:=[\vec F : \vec F\cap \sigma_F(\vec F)]$ is infinite.
Consider the element $c:=(x_0x_1)^{-1}$ that is in $\vec F$ where $x_n,n\geq 0$ is the classical set of generators of $F$.
If $I$ is finite, then there exists $r\geq 1$ such that $c^r\in \sigma_F(\vec F)$.
Recall from \cite[Remark 4.1.3]{GS17} that 
$$c(t) = \begin{cases} .00a \text{ if } t=.0a\\ .011a \text{ if } t=.110a\end{cases}.$$
Set $N(t):=| \{ i : a_i=0 \} |$ if $t=.a_1\cdots a_n 1$ and note that 
$$N(c^r(t)) =  \begin{cases} N(t) + r \text{ if } t=.0a\\ N(t)+ r-1 \text{ if } t=.110a\end{cases}.$$
This implies that $c^r$ does not stabilizes $\sigma(S)$ for any $r\neq 0$ and thus $c^r\notin \sigma_F(\vec F)$ for any $r\neq 0$.
Therefore, $I=\infty$.

Fix $g\in F$ and consider the index $I_g:=[ \sigma_F(\vec F) : \sigma_F(\vec F) \cap g \vec F g^{-1}]$.
If $g\in \vec F$, then $I_g=I=\infty$ by the previous argument.
Assume that $g\notin \vec F$.
Consider $d\in \vec F\cap \sigma_F(\vec F)$ as in Lemma \ref{lem:d}. 
Assume that $I_g$ is finite.
Then there exists $r\geq 1$ such that 
$d^r\in g\vec Fg^{-1}.$
Since $g$ is not in $\vec F$ there exists $t\in (0,1)$ that is not in $S$ such that $g(t)$ is in $S$.
Since $d$ is in $\vec F$ we have that $d^m(g(t))\in S$ for any $m\in \Z$.
By \cite[Lemma 4.14]{GS17} we have that there exists $\varep>0$ such that $g^{-1}(S\cap (0,\varep))$ is a subset of $S$.
Since $d$ satisfies the hypothesis of Lemma \ref{lem:d}, there exists $n\geq 1$ such that $d^{n r}(g(t))\in S\cap (0,\varep)$ and thus $g^{-1}(d^{n r }(g(t)))\in S$.
Since $t\notin S$ we obtain that $g^{-1}d^{n r}g\notin \vec F$ implying that $g^{-1}d^{r}g\notin \vec F$, a contradiction.
Therefore, $I_g$ is infinite for any $g \in F$ implying that $\vec F$ and $\sigma_F(\vec F)$ are not quasi-conjugate.

Our discussion at the beginning of this section implies that the representations $\pi$ and $\tilde \pi$ are not unitary equivalent.
\end{proof}

We have proved that $\pi$ and $\pi\circ\sigma_F$ are not unitary equivalent.
The remark done after Proposition \ref{exchab} tells us that when $a=b$ then the Jones representation $\pi^\delta_{a,a}$ is unitary equivalent to $\pi^\delta_{a,a}\circ\sigma_F$.
Since $\pi$ does not share this property we obtain the following corollary.

\begin{corollary}
Consider any $\delta$ equal to the square root of a non-trivial Jones index and the representation $\pi^\delta_{a,a}$ where $a=(2+2\delta)^{-1/2}.$
Then $\pi^\delta_{a,a}, \pi$ and $\pi\circ\sigma_F$ are mutually unitarily inequivalent.
\end{corollary}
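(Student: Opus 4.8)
The plan is to combine the three inequivalence facts already established into a single statement. We have three representations in play: $\pi = \pi^\delta_{a_0,b_0}$ with $(a_0,b_0) = (2^{1/4}, -2^{-1/4})$ (the chromatic choice at $\delta = \sqrt 2$, which is the ``$\pi$'' of the previous theorem), its twist $\pi \circ \sigma_F \cong \pi^\delta_{b_0,a_0}$ by Proposition~\ref{exchab}, and the representation $\pi^\delta_{a,a}$ with $a = (2+2\delta)^{-1/2}$, i.e.~the distinguished ``$a=b$'' member $\pi^\delta_+$ of the family. Note that for $\delta = \sqrt 2$ we indeed have $a = (2+2\sqrt 2)^{-1/2}$, so all three live in the same family $\{\pi^{\sqrt 2}_{a,b}\}$; but the statement is asserted for every admissible $\delta$, so we must be slightly careful — here $\pi$ and $\pi \circ \sigma_F$ are the \emph{specific} $\delta=\sqrt 2$ representations, while $\pi^\delta_{a,a}$ ranges over all $\delta$. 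Pairwise inequivalence of these three is what we want.

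First I would recall that $\pi$ and $\pi \circ \sigma_F$ are not unitarily equivalent: this is precisely the content of the preceding theorem of this section (via the Mackey criterion and the non-quasi-conjugacy of $\vec F$ and $\sigma_F(\vec F)$), so nothing new is needed there. Second, I would invoke the remark immediately following Proposition~\ref{exchab}: since $\pi^\delta_{a,a}$ has $a = b$, Proposition~\ref{exchab} gives $\pi^\delta_{a,a} \circ \sigma_F \cong \pi^\delta_{a,a}$, i.e.~$\pi^\delta_{a,a}$ is self-conjugate under $\sigma_F$ up to unitary equivalence. Now suppose for contradiction that $\pi^\delta_{a,a} \cong \pi$. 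Applying the (unitary) functor $\,\cdot\, \circ \sigma_F$ to both sides gives $\pi^\delta_{a,a} \cong \pi^\delta_{a,a}\circ\sigma_F \cong \pi \circ \sigma_F$, whence $\pi \cong \pi\circ\sigma_F$, contradicting the previous theorem. The same argument rules out $\pi^\delta_{a,a} \cong \pi\circ\sigma_F$: compose with $\sigma_F$ to get $\pi^\delta_{a,a} \cong \pi \circ \sigma_F \circ \sigma_F = \pi$ (since $\sigma_F$ has order two), again forcing $\pi \cong \pi \circ \sigma_F$, a contradiction. Together with the first point, this establishes that the three representations are mutually inequivalent.

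The only subtlety — and it is a genuinely trivial one here — is that composing a representation with a fixed automorphism of $F$ is an operation that preserves unitary equivalence classes, and that $\sigma_F^2 = \mathrm{id}$; both are immediate from $\sigma_F$ being an involutive automorphism. There is essentially no hard step: the corollary is a formal consequence of the section's main theorem plus the $\sigma_F$-invariance of $\pi^\delta_{a,a}$ recorded after Proposition~\ref{exchab}. I would present the proof in two short sentences, stating the contradiction argument for $\pi^\delta_{a,a} \not\cong \pi$ and noting that the case $\pi^\delta_{a,a} \not\cong \pi\circ\sigma_F$ is identical (using $\sigma_F^2 = \mathrm{id}$), then concluding with a reference to the already-proven $\pi \not\cong \pi \circ \sigma_F$.
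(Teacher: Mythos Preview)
Your proposal is correct and follows essentially the same route as the paper: the paper's ``proof'' is the two sentences immediately preceding the corollary, namely that $\pi^\delta_{a,a}\cong \pi^\delta_{a,a}\circ\sigma_F$ (remark after Proposition~\ref{exchab}) while $\pi\not\cong\pi\circ\sigma_F$ (the main theorem of the section), and hence $\pi^\delta_{a,a}$ can be equivalent to neither $\pi$ nor $\pi\circ\sigma_F$. Your write-up simply spells out the contradiction argument a bit more explicitly (including the use of $\sigma_F^2=\id$ for the second case), which is fine.
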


\begin{remark}
We have seen in Remark \ref{rem:infiniterep} that if each non-trivial representation $\pi_{a,b}^\delta$ does not weakly contain the trivial representation, then there are necessarily infinitely many classes of unitary representations in this family.
If a representation weakly contains the trivial one then it is amenable (which would always be the case if the group $F$ were amenable). 
Looking at a quasi-regular representations such as $\la_{F/\Vec F}$, then it weakly contains the trivial representation if and only if the homogenous space $F/\Vec F$ is amenable which, in this case, is equivalent to saying that the representation $\la_{F/\Vec F}$ is amenable.
Even in this specific situation we do not know if $1_F$ is weakly contained in $\la_{F/\Vec F}$, which is an interesting open problem.
\end{remark}

\section{The stabilizer subgroup of the vacuum vector}\label{sec:stab}
In this section we restrict our attention to a subfamily of Jones representations constructed with the planar algebra $\TLJ$.
Fix a loop parameter $\delta$ and put 
$$R=\Big(\frac{\delta}{\delta^2-1}\Big)^{1/2} R_1 - \Big(\frac{1}{\delta^3-\delta}\Big)^{1/2} R_2$$ that is a scalar 
multiple of the Jones-Wenzl idempotent.

Denote by $(\pi,\scrH,\Omega)$ the associated representation with its vacuum vector and put $t:=\delta^2.$
We write $F_\Omega=F_\Omega(t)$ the subgroup of elements of $F$ that stabilize the vacuum vector.

We recall a formula that describes the vacuum vector state in term of the chromatic polynomial. It makes use of a well known argument that appears in \cite[Section 2]{Ka88} in a somewhat different setting (see also \cite{FK09}).
The corresponding statement for loopsided planar algebras can be found in \cite[Section 5.2]{Jo14}  (cf.~\cite[p.~28]{Jo99}). 
For the convenience of the reader we include a short proof that fits well with our formalism.
Recall that the chromatic polynomial of a graph evaluated at a natural number $n$ is equal to the number of proper $n$-colorations of the graph and is the only one satisfying this condition.

\begin{notation}
If $G$ is a graph and $t$ is a complex number, then we denote by $G(t)$ its chromatic polynomial evaluated in $t$.
\end{notation}

\begin{proposition}\label{prop-chr-stab}
Consider a group element $g$ of the Thompson's group $F$ described by the pair of trees $T_\pm$ having $n$ leaves.
Then 
$$
\langle \pi_g\Omega,\Omega\rangle=\frac{ \Ga(t)}{t(t-1)^{n-1}},
$$
where $\Ga=\Ga(T_+,T_-)$ is the graph associated to $T_\pm$ described in \cite{Jo14} and $t = \delta^2$.
\end{proposition}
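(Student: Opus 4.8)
The plan is to run the classical Kauffman--Jones argument (the one indicated in \cite[Section 2]{Ka88} and \cite[Section 5.2]{Jo14}) inside the present irreducible--$\TLJ$ formalism. First I would use irreducibility of $\TLJ(\delta)$: as recalled in Section \ref{sec:JonesRep}, $\langle\pi_g\Omega,\Omega\rangle=\varphi(g)=\Phi(T_+)^\dag\circ\Phi(T_-)$, an element of $\Rec(\TLJ)(1,1)\cong\C$, and unwinding the identification of $\cP_1^+$ with $\C$ through the normalized left trace, this scalar equals $\delta^{-1}$ times the value of the honestly \emph{closed} $\TLJ$-diagram $D$ obtained by stacking the ternary-tree pictures of $T_+$ (reflected) and $T_-$, replacing every branching by $R$, and capping off the two remaining boundary strands. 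The second preliminary observation is that the chosen $R$ is a scalar multiple of the Jones--Wenzl idempotent on two strands: $R=\big(\tfrac{\delta}{\delta^2-1}\big)^{1/2}\,p$ with $p:=R_1-\delta^{-1}R_2$, since $\tfrac1\delta\big(\tfrac{\delta}{\delta^2-1}\big)^{1/2}=\big(\tfrac{1}{\delta^3-\delta}\big)^{1/2}$ and $p^2=p$. As $\tilde T_+$ and $\tilde T_-$ have $n-1$ branchings each and $p^\dag=p$, this yields $D=\big(\tfrac{\delta}{\delta^2-1}\big)^{n-1}\,\widehat D$, where $\widehat D$ is the value of the closed network obtained from $D$ by placing a copy of $p$ at each of its $2(n-1)$ branchings; by the very definition of the graph $\Ga=\Ga(T_+,T_-)$ in \cite[Section 5]{Jo14}, the planar connectivity pattern of these $p$-boxes is (dual to) $\Ga$, whose vertex set has cardinality $n$ and whose edge set is identified with this set of boxes.

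Next I would expand each projector via $p=R_1-\delta^{-1}R_2$. This turns $\widehat D$ into a state sum over subsets $A$ of the edge set $E(\Ga)$: keeping $R_1$ on the boxes outside $A$ and the cup--cap $R_2$ on the boxes in $A$ produces a planar system of disjoint loops, so
\[
\widehat D=\sum_{A\subseteq E(\Ga)}(-\delta^{-1})^{|A|}\,\delta^{\ell(A)},
\]
where $\ell(A)$ is the number of loops after this smoothing. The combinatorial heart of the matter is the Fortuin--Kasteleyn / Potts--Temperley-Lieb loop count: for the planar network associated to $\Ga$ one has $\ell(A)=2c(A)+|A|-|V(\Ga)|$, where $c(A)$ is the number of connected components of the spanning subgraph $(V(\Ga),A)$, an Euler-characteristic bookkeeping for the smoothed picture. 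Substituting and using $|V(\Ga)|=n$ gives $\widehat D=\delta^{-n}\sum_{A\subseteq E(\Ga)}(-1)^{|A|}(\delta^2)^{c(A)}$, and this sum is precisely the Whitney-rank (subgraph) expansion $\Ga(t)=\sum_{A\subseteq E(\Ga)}(-1)^{|A|}t^{\,c(A)}$ of the chromatic polynomial evaluated at $t=\delta^2$. Hence $\widehat D=\delta^{-n}\,\Ga(\delta^2)$.

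Finally I would assemble the constants: $\langle\pi_g\Omega,\Omega\rangle=\delta^{-1}D=\delta^{-1}\big(\tfrac{\delta}{\delta^2-1}\big)^{n-1}\widehat D=\delta^{-1}\cdot\tfrac{\delta^{\,n-1}}{(\delta^2-1)^{n-1}}\cdot\delta^{-n}\,\Ga(\delta^2)=\tfrac{\Ga(t)}{t(t-1)^{n-1}}$ with $t=\delta^2$, as claimed; the case $g=e$, where $T_+=T_-$, $\varphi(e)=1$, and $\Ga_e$ has chromatic polynomial $t(t-1)^{n-1}$, is a convenient consistency check that the $\delta$-powers have been tracked correctly. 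I expect the main obstacle to be exactly this sort of normalization bookkeeping combined with the loop-count identity: one must be careful about the extra middle strands created by the ternary fattening $t\mapsto\tilde t$ and by the $R_1^3$ insertions, checking that they produce no spurious closed loops, and one must justify $\ell(A)=2c(A)+|A|-|V(\Ga)|$ from Jones' concrete description of $\Ga$ rather than for a generic planar graph. Everything else is the standard chromatic-polynomial manipulation and can be cited.
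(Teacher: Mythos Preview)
Your proposal is correct and follows essentially the same Kauffman--Jones argument as the paper. The only cosmetic differences are that the paper works directly with the $(1,1)$-tangle in $\cP_1^+\cong\C$ rather than capping off to a closed diagram (hence their $\Loop(S)=\ell(A)-1$), and the paper derives your loop-count identity $\ell(A)=2c(A)+|A|-n$ from the shading of $B_S$ together with Euler's formula $n-|S|+F(S)=1+k(S)$ rather than citing it as a Fortuin--Kasteleyn fact.
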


\begin{proof}
Fix $g,n,T_\pm,$ and $\Ga:=\Ga(T_+,T_-)$ as in the statement.
Denote by $A$ the graph obtained as usual by concatenating vertically $T_-$ on the bottom and $T_+$ on top.
Recall that $\Ga$ has $n$ vertices and $2(n-1)$ edges.
The construction of $\Ga$ gives us a 
bijection between the edges of $\Ga$ and the vertices of $A$.
We associate to $A$ the tangle $T_A$ having $2(n-1)$ interior discs with 4 boundary points corresponding to each vertices and having one string on top and on the bottom.
For any subset $S\subseteq E(\Ga)$ consider the element $B_S\in \TLJ_1$ where we plug in $-\frac{1}{\sqrt{t}} \; {}^\cup_\cap$  in the interior of each  disc of $T_A$ corresponding to an edge of $S$ and $| \; |$ for the others and do not remove any closed curve.
We write $Z(B_S)$ for the scalar value of $B_S$ which consists of $t^{\Loop(S)/2}$, where $\Loop(S)$ is the number of closed curves of $B_S$.
Let $\Ga_S$ be the subgraph of $\Ga$ with the same vertex set as $\Gamma$ and edge set equal to $S\subset E(\Ga)$.
Write $k(S)$ for the number of connected components of $\Ga_S$ and $F(S)$ for the number of regions of $\Ga_S$.
Note that by Euler Formula we have that 
\begin{equation}\label{equa:Euler}
n-\vert S\vert + F(S)=1 + k(S).
\end{equation}
The element $B_S$ may be shaded 
(by convention the left part of $B_S$ is not coloured and thus the right side is coloured).
Observe that the number of uncoloured (resp. coloured) regions of $B_S$ is equal to $k(S)$ (resp.   $F(S)$).
 Then it is not difficult to see that
\begin{equation}\label{equa:loop}
k(S)+ F(S) = \Loop(S) +2
\end{equation}
since $B_S$ is a diagram with one vertical string and some closed curves.

By definition, we have that
\begin{align*}
\langle \pi_g\Omega ,\Omega\rangle & =\sum_{S\subseteq E(\Ga)} \left(\frac{\sqrt t}{t-1}\right)^{n-1} Z(B_S)\\
& =\sum_{S\subseteq E(\Ga)} \left(\frac{\sqrt t}{t-1}\right)^{n-1} \sqrt t ^{\Loop(S) } \left(\frac{-1}{\sqrt t}\right)^{|S|}\\
& =\sum_{S\subseteq E(\Ga)} (-1)^{|S|} \frac{t^{(\Loop(S)+n-1-|S| )/2} }{(t-1)^{n-1}} \\
& = \sum_{S\subseteq E(\Ga)} (-1)^{|S|} \frac{t^{(k(S)+F(S)+n-3-|S| )/2} }{(t-1)^{n-1}}  \text{ by the observation \eqref{equa:loop}   above,}\\
& = \sum_{S\subseteq E(\Ga)} (-1)^{|S|} \frac{t^{(2k(S)-2)/2} }{(t-1)^{n-1}}  \text{ by Euler Formula \eqref{equa:Euler}.}
\end{align*}
This implies the result via the Birkhoff and Whitney formula (see e.g.~\cite[Chapter X]{Bo98}) which claims that  the chromatic polynomial of $\Ga$ 
is equal to 
$\sum_{S\subset E(G)} (-1)^{\vert S\vert} X^{k(S)}.$
\end{proof}

The next result should be compared to \cite[Theorem 4.2]{AiCo1}. With $\delta$, $T_\pm$, $n$ as above, if $x$, $y$ lie on the hyperbola $(x-1)(y-1)=\delta^2$, 
by a similar argument one can indeed get the Tutte function
$T_g(x,y) := \frac{T_{\Gamma(T_+,T_-)}(x,y)}{(x+y)^{n-1}}$ on $F$ (here, $T_{\Gamma(T_+,T_-)}(x,y)$ denotes the Tutte polynomial) as the vacuum coefficient function of $\pi=\pi^\delta_{a,b}$, 
namely
$$\langle \pi_g \Omega, \Omega \rangle = T_g(x,y) \ , \; g \in F \ , $$
when $x < 1$, $y<1$ (so that $x+y <0$)
for $a=\sqrt{\frac{\delta}{(x+y)(y-1)}}$,
$b=-\sqrt{\frac{(y-1)}{\delta(x+y)}}$,
and when $x > 1$, $y>1$ 
for $a=\sqrt{\frac{\delta}{(x+y)(y-1)}}$,
$b=\sqrt{\frac{(y-1)}{\delta(x+y)}}$.
 With $x,y$ as above, 
 if in addition $x=y$,  
 the associated representation of $F$ is invariant under composition with $\sigma_F$ (up to unitary equivalence). Indeed, 
 in this case one has
$a=-b$ 
or
  $a=b$, 
  respectively, 
  and from Proposition \ref{phase}. 
The previous formulae can be rewritten directly in terms of $a$ and $b$. 
For any loop parameter $\delta$ and 
$(a,b)\in\CC_R^\delta$ satisfying $ab\neq 0$ we can express the vacuum state in terms of the Tutte polynomial as follows:
$$
\langle \pi_{a,b}^\delta(g)\Omega, \Omega\rangle = T_g\big(x,y\big) , \quad g \in F ,
$$
where $x =   (ab)^{-1}-\delta b/a-1$ and $y = \delta b/a+1$.
This provides a friendly combinatorial description of the vacuum state. It also suggests that the techniques developed in this section could be adapted to any non-degenerate choice of the real parameters $a,b.$

\begin{remark}\label{rem:Kauffman}
We point out another particular choice of the parameters which yields a vacuum vector state with a geometrical interpretation \cite{Jo14}.
For $n=3,4,5,\ldots $ set $A:=e^{i\pi (1\pm 1/n)/2}$ and $\delta := -A^2-A^{-2}=2\cos(\pi/n)$.
In this setting, if one choses $x=A/(-A^2-A^{-2})^{1/2}$ and $y=\bar A /(-A^2-A^{-2})^{1/2}$, that is $R=A/(-A^2-A^{-2})^{1/2} \; | \, |+ \bar A /(-A^2-A^{-2})^{1/2} \; {}^\cup_\cap$, then it can be easily verified that $R$ is normalized and that the vacuum vector state is equal (up to normalization) to the Kauffman bracket of a certain link. To be more precise, we have
$$
\varphi_{x,y}^\delta(g)=\frac{\langle L(T_+,T_-)\rangle}{( -A^2-A^{-2})^{n-1}}
$$ 
where $g=g(T_+,T_-)\in F$ for some trees $T_+, T_-$ with $n$ leaves and $L(T_+,T_-)$ is the knot/link associated to $g$ produced with Jones' construction relevant for $A$ being a root of unity \cite[Section 5.3]{Jo14}.
\end{remark}

We recover the description of the stabilizer subgroup $F_\Omega(t)<F$ of the vacuum vector $\Omega$ that is the $g\in F$ such that $\Ga(t)=t(t-1)^{n-1}$ where $g=(T_+,T_-)$, $\Ga=\Ga(T_+,T_-)$, and $T_\pm$ has $n$ leaves.
Note that if $t=2$, the subgroup $F_\Omega$ is the Jones subgroup $\Vec F$ studied in Section \ref{JonesSubgroup} and is equal to the set of $g=(T_+,T_-)$ having that $\Ga=\Ga(T_+,T_-)$ is bipartite as it was proven by Jones. 
We are interested in studying this subgroup $F_\Omega(t)$ for the other values of $t.$
We will prove that in many cases this subgroup is trivial.

\begin{remark}
Note that since $\pi$ is a unitary representation and $\Omega$ is of norm one we have that
\begin{equation}\label{equa:chromatic}
\vert\Ga(t)\vert\leqslant t(t-1)^{n-1} \end{equation}
for any Thompson graph $\Ga$ (the graph associated to any group element $g=(T_+,T_-)\in F$) with n vertices and for any index $\{4\cos(\pi/k)^2:k\geqslant 4\}\cup [4,\infty).$
As far as we know  it is an open question whether this inequality remains true for $t\geq 2$ and for any connected planar graph  $\Ga$  with $n$ vertices.
\end{remark}

In particular, the subgroup $F_\Omega(t)$ is trivial if for any Thompson graph $\Ga$ with $n$ vertices that is not a tree we have that $|\Ga(t)|<t(t-1)^{n-1}.$
Here are some well known and easy observations.

\begin{proposition}\label{prop:chromatic}
\begin{enumerate}
\item If equation \eqref{equa:chromatic} is true for any connected planar graphs for a fixed real $t>2$, then the subgroup $F_{\Omega}(t)$ is trivial.
\item If $|\Ga(t)|>0$ for any connected planar graph for a fixed real $t$, then  the subgroup $F_{\Omega}(t)$ is trivial.
\item Equation \eqref{equa:chromatic} is true for any natural numbers $k=t\geq 2$ and any connected planar graphs. 
 \item There are at most countably many values of $t$ such that $F_\Omega(t)$ is non-trivial.
\end{enumerate}
\end{proposition}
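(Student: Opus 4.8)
The plan is to route all four parts through the reduction recorded just before the statement: since $\Omega$ is a unit vector and $\pi$ is unitary, $g=(T_+,T_-)$ lies in $F_\Omega(t)$ exactly when $\Gamma(T_+,T_-)(t)=t(t-1)^{n-1}$, so by \eqref{equa:chromatic} it suffices to establish the strict inequality $|\Gamma(t)|<t(t-1)^{n-1}$ whenever $\Gamma$ is the graph of a non-trivial element. I would first isolate the two structural inputs that make this workable. First, a reduced pair of distinct trees has $n\ge 3$ leaves and, as already used in the proof of Proposition~\ref{prop-chr-stab}, produces a connected planar graph $\Gamma$ with $n$ vertices and $2(n-1)$ edges, so its cyclomatic number is $n-1\ge 2$. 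Second, one checks from Jones' construction \cite{Jo14} that such a $\Gamma$ either carries a loop --- in which case $\Gamma(t)=0<t(t-1)^{n-1}$ and we are done --- or its underlying simple graph $G$ is not a tree (equivalently, $\Gamma$ is not a ``doubled tree'') and $\Gamma(t)=G(t)$. Thus in every part it remains to handle a connected planar simple graph $G$ on $n$ vertices that contains a cycle.

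Parts (3) and (4) are then quick. For (3), when $t=k\ge 2$ is a natural number $G(k)$ is the number of proper $k$-colourings of $G$, and restricting a colouring to a spanning tree gives $0\le G(k)\le k(k-1)^{n-1}$, which is \eqref{equa:chromatic}. For (4), fix a non-trivial $g$ with graph $\Gamma_g$ on $n_g$ vertices and set $p_g(X):=\Gamma_g(X)-X(X-1)^{n_g-1}$; if $\Gamma_g$ has a loop then $p_g(X)=-X(X-1)^{n_g-1}\not\equiv 0$, and otherwise the simplification of $\Gamma_g$ has $m'\ge n_g$ edges, so the coefficient of $X^{n_g-1}$ in $p_g$ equals $-(m'-n_g+1)\ne 0$; in either case $p_g$ is a non-zero polynomial with finitely many roots, and since $F$ is countable $\{t:F_\Omega(t)\ne\{e\}\}\subseteq\bigcup_{g\ne e}p_g^{-1}(0)$ is countable.

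For (1) and (2) the task is to upgrade the hypothesis into $|G(t)|<t(t-1)^{n-1}$ for a connected planar $G$ with a cycle. I would dispose of the unicyclic case directly, as it will also be the base of an induction: if $G$ is a cycle $C_\ell$ ($\ell\ge 3$) with pendant trees attached, then $G(t)=C_\ell(t)\,(t-1)^{n-\ell}$ with $C_\ell(t)=(t-1)^\ell+(-1)^\ell(t-1)$, and for every real $t>2$ and $\ell\ge 3$ one has $0<C_\ell(t)\le (t-1)^\ell+(t-1)<(t-1)^\ell+(t-1)^{\ell-1}=t(t-1)^{\ell-1}$, whence $|G(t)|<t(t-1)^{n-1}$ (and the same computation at $t=2$ shows that equality can occur only for even cycles, which is the Jones-subgroup phenomenon). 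For the general case I would induct on the cyclomatic number of $G$: pick an edge $e$ lying on a cycle, apply the deletion--contraction identity $G(t)=(G-e)(t)-(G/e)(t)$, and feed in the hypothesis --- the bound \eqref{equa:chromatic} in case (1), non-vanishing of the chromatic polynomial in case (2) --- for the connected planar graphs $G-e$ (on $n$ vertices) and $G/e$ (on $n-1$ vertices).

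The step I expect to be the real difficulty is precisely this inductive step. The triangle inequality $|G(t)|\le|(G-e)(t)|+|(G/e)(t)|$ is too lossy, and to close the induction one needs genuine control of the relative \emph{signs} of $(G-e)(t)$ and $(G/e)(t)$: for integer $t$ both are non-negative (they count colourings), which trivialises everything and is why $t\in\{3,4\}$, i.e.\ $\delta\in\{\sqrt 3,2\}$, drop out painlessly; but for real $t>2$ a naive sign argument of this type already fails on $(1,2)$ (take $G=C_4$) and must be established carefully, presumably by exploiting planarity. Pinning down this sign compatibility --- or otherwise finding a substitute for deletion--contraction that respects absolute values --- is the crux of (1) and (2); the remaining bookkeeping is routine.
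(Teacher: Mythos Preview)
Your treatments of (3) and (4) match the paper's. The gap is in (1) and (2): you correctly diagnose that a bare triangle inequality on deletion--contraction is too lossy, but then leave the sign issue unresolved and call it ``the crux''. The paper closes both parts with short arguments that sidestep your induction on cyclomatic number entirely.

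For (1), the point you are missing is a structural feature of Thompson graphs rather than a fact about arbitrary connected planar graphs with a cycle: by the construction of $\Gamma(T_+,T_-)$, the last vertex $n$ has degree at most $2$. Removing $n$ together with its two incident edges $e,f$ (with other endpoints $i,j$) leaves a connected planar $\Lambda$ on $n-1$ vertices, and two deletion--contraction steps give the exact identity
\[
\Gamma(t)=(t-2)\,\Lambda(t)+(\Lambda/(ij))(t).
\]
Now apply the hypothesis \eqref{equa:chromatic} directly to the connected planar graphs $\Lambda$ and $\Lambda/(ij)$ (the latter on $n-2$ vertices):
\[
|\Gamma(t)|\le (t-2)\,t(t-1)^{n-2}+t(t-1)^{n-3},
\]
and a one-line check shows the right-hand side is strictly below $t(t-1)^{n-1}$ precisely when $t>2$. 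No induction on cyclomatic number, no unicyclic base case, no sign analysis.

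For (2), the hypothesis \emph{is} the sign control you were searching for. The paper reads the assumption as $\Gamma(t)>0$ for every connected planar $\Gamma$, writes $\Gamma=T+e_1+\cdots+e_l$ for a spanning tree $T$, and adds the $e_i$ one at a time: since $(\Lambda+e)(t)=\Lambda(t)-(\Lambda/e)(t)$ and $\Lambda/e$ is again connected planar, positivity forces $(\Lambda+e)(t)<\Lambda(t)$ strictly. Iterating yields $0<\Gamma(t)<T(t)=t(t-1)^{n-1}$. So once positivity is in hand, each deletion--contraction step is a genuine decrease and absolute values never enter the picture.
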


\begin{proof}
Proof of (1).
Consider a Thompson graph $\Ga$ with vertices $1,\cdots,n$.
Note that $\Ga=\La+e+f+n$, where $\La$ is obtained from $\Ga$ by removing the last vertex $n$ that has degree one or two and we wrote $e,f$ the edges.
Note that $\Ga=(\Ga-e) - \Ga/e=(t-1)\La -\La + \La/(ij)=(t-2)\La(t)+\La/(ij)$ where $i=\target(e),j=\target(f)$.
The inequality implies that 
$\vert\Ga(t)\vert\leqslant (t-2)t(t-1)^{n-2} + t(t-1)^{n-3}<t(t-1)^{n-1}$ since $t>2$.
In particular, $\langle\pi(g)\Omega,\Omega\rangle\neq 1$ when the Thompson graph  $\Ga$ associated to $g$ is not a tree (up to parallel edges). But this means that $g\neq 1$ and thus the subgroup is trivial.

Proof of (2).
Consider $t$ such that it is known that $\Ga(t)>0$ for any planar graph.
Fix a connected planar graph $\Ga$ that is not a tree.
Consider a connected planar graph $\Ga$.
It contains a spanning tree $T$ and thus $\Ga=T+(e_1,\cdots,e_l)$ where $e_i$ are some edges of $\Ga$.
We prove the result by induction on $l$.
Inizialization, $l=1$: we have $\Ga=T+e = T - T/e$.
Therefore, $\Ga(t)<T(t)$.
Suppose the result true for $l$, assume $\Ga= T+(e_1,\cdots,e_l,e_{l+1})$, and put $\La=T+(e_1,\cdots,e_l)$.
Then $\Ga=\La+e_{l+1}=\La - \La/e_{l+1}$. The same argument gives that $\Ga(t)<\La(t)<T(t)$.

Proof of (3).
Consider a connected planar graph $\Ga$, a natural number $k\geqslant 2$, and assume that $\Ga$ is not a tree.
Consider a spanning subtree $T\subset \Ga$ with the set vertex set.
Observe that the set of $k$-coloring of $\Ga$ is a subset of the $k$-coloring of $T$ which implies the inequality.

Proof of (4).
Consider a non-trivial group element $g\in F$ and its associated graph $\Ga$. If $g$ can be expressed by a pair of trees with $n$ leaves, then $\langle \pi^t_g\Omega,\Omega\rangle=\frac{ \Ga(t)}{t(t-1)^{n-1}}$, where $\pi^t$ is the representation associated with $t=\delta^2$ that we considered all along this section.
Therefore, $g\in F_\Omega(t)$ if and only if $\Ga(t) = t(t-1)^{n-1} $. It is well known that only a tree with $n$ leaves has its chromatic polynomial equal to the polynomial $x(x-1)^{n-1}$.
Moreover, if $g$ is non-trivial, then its associated graph $\Ga$ is not a tree.
Therefore, the set $X_g:=\{t>2: g\in F_\Omega(t)\}$ is necessarily finite since it is equal to some roots of a non-zero polynomial.
We obtain that $X=\bigcup_{g\in F, g\neq 1} X_g$ is a countable set that is equal to the set of $t$ such that $F_\Omega(t)$ is non-trivial.

\end{proof}

Birkhoff and Lewis proved that any planar graph has no real roots in $[5,\infty)$ and thus is strictly positive on this half-line implying that $F_\Omega(t)$ is trivial for $t>5$ by Proposition \ref{prop:chromatic}.
We present an elementary proof giving a slightly smaller lower bound.

\begin{theorem}\label{theo:large-t}
Equation \eqref{equa:chromatic} is true for any connected planar graph and any real number $t>4.63$.
Moreover, the inequality is strict if $\Ga$ is not a tree implying that $F_\Omega(t)$ is trivial for any $t>4.63.$
\end{theorem}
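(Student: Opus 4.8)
The plan is to bound $|\Gamma(t)|$ from above by induction on the number of vertices $n$, using a deletion–contraction argument tailored to the special structure of Thompson graphs. Recall from the proof of Proposition~\ref{prop:chromatic}(1) that a Thompson graph $\Gamma$ on vertices $1,\dots,n$ has a vertex of degree one or two (the last leaf), so $\Gamma$ is obtained from a smaller graph $\Lambda$ on $n-1$ vertices either by attaching a pendant vertex, in which case $\Gamma(t)=(t-1)\Lambda(t)$, or by attaching a vertex joined to two vertices $i,j$ of $\Lambda$, in which case $\Gamma(t)=(t-2)\Lambda(t)+\Lambda(t)\big|_{i\sim j}$, where $\Lambda\big|_{i\sim j}$ denotes $\Lambda$ with $i$ and $j$ identified (again a graph on $n-2$ vertices, and in fact — this is the point — again a graph of ``Thompson type'' or close enough that the induction runs). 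The first case is harmless: it preserves the ratio $|\Gamma(t)|/\big(t(t-1)^{n-1}\big)$. In the second case one writes
$$
|\Gamma(t)| \le (t-2)\,|\Lambda(t)| + \big|\Lambda(t)\big|_{i\sim j}\big|,
$$
and the plan is to feed in the inductive estimates $|\Lambda(t)|\le t(t-1)^{n-2}$ and $\big|\Lambda(t)\big|_{i\sim j}\big|\le t(t-1)^{n-3}$, giving $|\Gamma(t)|\le t(t-1)^{n-3}\big((t-2)(t-1)+1\big)=t(t-1)^{n-3}(t^2-3t+3)$, and then to check that $t^2-3t+3<(t-1)^2=t^2-2t+1$ precisely when $t>2$. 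This already recovers a weak form of the statement, so clearly the subtlety is that the contracted graph $\Lambda\big|_{i\sim j}$ need not itself admit the clean inductive bound — if $i$ and $j$ are not adjacent in $\Lambda$, identifying them can create a graph outside the class, and if they are adjacent one creates a multiple edge (which is fine, parallel edges do not change the chromatic polynomial) but the recursion tree can branch badly.

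The main work, and the reason the threshold is $4.63$ rather than $2$, will be to control this branching. I would set up the induction not on a single inequality but on a two-term bound: prove simultaneously that for every Thompson graph $\Gamma$ on $n$ vertices, $|\Gamma(t)|\le t(t-1)^{n-1}$, and moreover that a suitable ``defect'' quantity — something like $t(t-1)^{n-1}-|\Gamma(t)|$, or the ratio $|\Gamma(t)|/|\Gamma'(t)|$ for $\Gamma'$ the graph with one fewer edge — stays bounded by an explicit constant depending on $t$. Concretely I expect to track the recursion $\Gamma(t)=\Lambda(t)-\Lambda(t)/e$ for a non-tree edge $e$ of the spanning tree, exactly as in the proof of Proposition~\ref{prop:chromatic}(2), and to bound $|\Lambda(t)/e|$ in terms of $|\Lambda(t)|$ by an explicit factor; the constant $4.63$ should emerge as the point where a polynomial inequality of degree $2$ or $3$ in $t$ (coming from collecting all the deletion–contraction terms at one vertex, or from a short computation on the worst small configuration, e.g.\ a triangle or a $K_4$ minus an edge appearing as a ``gadget'') flips sign. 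The explicit numerical root $4.63$ strongly suggests that one isolates a concrete low-degree polynomial $P(t)$ — likely cubic — whose largest real root is just below $4.63$, and the whole theorem reduces to: (i) $P(t)>0$ for $t>4.63$, a one-line check; and (ii) the inductive step is driven by $P(t)>0$.

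The hardest part will be pinning down exactly which polynomial $P$ governs the induction, i.e.\ identifying the extremal local configuration in a Thompson graph that produces the worst ratio, and verifying that the induction genuinely closes with the strict inequality for non-trees (so that $F_\Omega(t)=\{e\}$ follows via Proposition~\ref{prop:chromatic}(1), or rather its strict version, since $g\ne e$ forces $\Gamma$ to contain a cycle, hence $|\Gamma(t)|<t(t-1)^{n-1}$, hence $\langle\pi_g\Omega,\Omega\rangle\ne 1$ by Proposition~\ref{prop-chr-stab}). Once the polynomial inequality is in hand the rest is bookkeeping: handle the two cases of how the final leaf attaches, use invariance of the chromatic polynomial under adding parallel edges, and invoke that a graph with $n$ vertices whose chromatic polynomial equals $t(t-1)^{n-1}$ is a tree. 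I would end by remarking that the same scheme, replacing the chromatic polynomial by the Tutte polynomial via the formula recorded after Proposition~\ref{prop-chr-stab}, plausibly gives triviality of the vacuum stabilizer for generic real $(a,b)$, though I would not carry that out here.
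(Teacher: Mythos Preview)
Your plan correctly identifies the basic mechanism (induction on $n$, remove a low-degree vertex, deletion--contraction) and correctly diagnoses the obstacle: after contracting, the graph $\Lambda/(ij)$ need not be a Thompson graph, so an induction restricted to Thompson graphs does not close. But you do not actually resolve this obstacle---you only speculate that some cubic or quadratic polynomial will appear whose largest root is near $4.63$, without saying what it is or why the induction would then close.

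The paper's solution is the one idea you are missing: rather than restrict to Thompson graphs, prove inequality \eqref{equa:chromatic} for \emph{all} connected planar graphs. Then every quotient $\Lambda/(ij)$, $\Lambda/(ijl)$, \dots\ is again connected and planar, and the inductive hypothesis applies directly to each term. The price is that a general connected planar graph need not have a vertex of degree $\le 2$; Euler's formula only guarantees a vertex of degree $\le 5$. Removing such a vertex $0$ with neighbours $1,\dots,k$ (so $\Gamma=\Lambda_k$ for some $k\le 5$) and expanding via the identity
\[
\Lambda_k \;=\; (t-k)\,\Lambda \;+\; \sum_{i<j}\Lambda/(ij)\;-\;\sum_{i<j<l}\Lambda/(ijl)\;+\;\cdots
\]
one bounds each summand by the inductive hypothesis and is left with an explicit polynomial inequality in $t$ for each $k\le 5$. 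The threshold $4.63$ is dictated by the worst case $k=5$, so the governing polynomial is of degree five (degree four after a natural substitution), not two or three. Your degree-$2$ computation, which yields the threshold $t>2$, is exactly the $k=2$ case of this scheme and would indeed suffice if one could stay inside the class of Thompson graphs---but one cannot, and enlarging the class to all connected planar graphs is precisely what makes the induction go through. (A small additional wrinkle, which you also do not address, is that removing the chosen vertex may disconnect $\Lambda$; the paper treats this separately via the product formula for chromatic polynomials of graphs sharing a single vertex.)
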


We start with a useful lemma.

\begin{lemma}
Consider a graph $\La$ and $k\geq1$ vertices of $\La$ written $\{1,\cdots, k\}$.
Let $0$ be a new vertex and let $\{ (01),\cdots, (0k) \}$ be some edges from $0$ to $j, 1\leq j\leq k$.
Denote by $\La_k$ the graph obtained by adding $0$ and those edges to $\La.$
We identify the graph $\La$ with its chromatic polynomial evaluated in $t$.
By removing-contracting edges we obtain the following formula for any $k$:
\begin{equation}\label{equa:Lambda_k}
\La_k= (t-k)  \La + \sum_{i<j} \La/(ij) - \sum_{i<j<k} \La/(ijk) + \sum_{i<j<k<l}\La/(ijkl) -\cdots,
\end{equation}
where $\La/(ij)$ is the graph $\La$ quotiented by $(ij)$ that is an edge between $i$ and $j$.
We write $\La/(ijk)$ the quotient by $(ij)$ and $(jk)$ and so on.
In particular,
\begin{align*}
\La_2 & = (t-2)\La + \La/(12)\\
\La_3 & = (t-3)\La + \big(\La/(12) + \La/(13) + \La/(23)\big) - \La/(123)\\
\La_4 & = (t-4)\La + [ \La/(12)+\La/(13)+\La/(14)+\La/(23)+\La/(24)+\La/(34) ] \\
& - [ \La/(123)+\La/(124)+\La/(134)+\La/(234) ] + \La/(1234).\\
\end{align*}
\end{lemma}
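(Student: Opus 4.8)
First I would establish \eqref{equa:Lambda_k} by iterated deletion-contraction on the $k$ edges $(01),\dots,(0k)$ of $\La_k$, and then group the resulting terms by size. Recall the recursion $G=(G-e)-G/e$ for the chromatic polynomial, used already in the proof of Proposition~\ref{prop:chromatic}, together with the convention of the lemma that a graph is identified with its chromatic polynomial evaluated at $t$. Applying the recursion successively to $e_1=(01),e_2=(02),\dots,e_k=(0k)$, at the $m$-th stage one either deletes $e_m$ or contracts it, so a complete run of the recursion is indexed by the set $T\subseteq\{1,\dots,k\}$ of indices at which a contraction was performed, and it carries the sign $(-1)^{|T|}$.

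The one point that genuinely has to be checked is that the branch indexed by $T$ produces the chromatic polynomial $\La/T$, where $\La/T$ denotes $\La$ with all the vertices of $T$ identified to a single vertex, with the conventions $\La/\{j\}=\La$ (the pendant vertex $0$ being simply erased) and $\La/\emptyset$ equal to $\La$ with one extra isolated vertex, of chromatic polynomial $t\,\La$. Indeed, the contractions along the branch merge the new vertex $0$ together with exactly the vertices of $T$ into one vertex, in an order-independent way, while every vertex of $\La$ outside $T$ and every edge of $\La$ survives; the loops or parallel edges that a contraction may create are harmless for the chromatic polynomial. Summing over all $2^k$ branches I obtain the compact identity
$$\La_k=\sum_{T\subseteq\{1,\dots,k\}}(-1)^{|T|}\,\La/T .$$

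It then remains only to sort this sum by $|T|$. The term with $|T|=0$ contributes $t\,\La$ and the $k$ terms with $|T|=1$ each contribute $-\La$, so together they yield the coefficient $(t-k)$ of $\La$ in \eqref{equa:Lambda_k}; the terms with $|T|=2$ give $+\sum_{i<j}\La/(ij)$, those with $|T|=3$ give $-\sum_{i<j<l}\La/(ijl)$, and so on, which is exactly the alternating tail of \eqref{equa:Lambda_k}, and the displayed formulas for $\La_2,\La_3,\La_4$ are just the cases $k=2,3,4$. No passage through integer values of $t$ is needed, since every step is already an identity of polynomials. I expect the only real obstacle to be the bookkeeping of the second paragraph, namely verifying that any partial sequence of contractions leaves $0$ identified precisely with the vertices contracted so far and with nothing else; once this is observed the rest is formal. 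As an alternative one could argue combinatorially, counting proper $t$-colourings of $\La_k$ as proper $t$-colourings $c$ of $\La$ together with a colour for $0$ outside $\{c(1),\dots,c(k)\}$ and expanding $t-|\{c(1),\dots,c(k)\}|$ via the identity $\sum_{j\ge 2}(-1)^j\binom{b}{j}=b-1$ applied blockwise to the partition of $\{1,\dots,k\}$ induced by $c$.
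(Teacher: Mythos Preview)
Your proof is correct. The identification of each branch with $\La/T$ is the only point that requires care, and you have it right: processing the edges $(01),\dots,(0k)$ in order, the edge $(0m)$ at step $m$ always joins the current $0$-cluster to the fresh vertex $m$, so it never degenerates to a loop before being processed, and the outcome depends only on the set $T$ of contracted indices.

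The paper's proof is a one-line sketch by induction on $k$, passing from $\La_k$ to $\La_{k+1}$ via deletion--contraction on the last edge and invoking an auxiliary graph $L_k$ to handle the contraction term. Your route is the same deletion--contraction idea but organised as a single expansion over all $2^k$ subsets rather than an induction; this has the advantage that the alternating structure of \eqref{equa:Lambda_k} drops out immediately from grouping by $|T|$, whereas the inductive argument requires a bit more bookkeeping to see how the sums over $[k]$ and over $[k{+}1]$ match up. The combinatorial alternative you mention at the end (counting colourings of $\La_k$ by first colouring $\La$ and then $0$) is also valid, though the inclusion--exclusion needed to convert $t-|\{c(1),\dots,c(k)\}|$ into the alternating sum over identifications is a little more delicate than your parenthetical suggests.
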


\begin{proof}
This can be easily proved by induction on $k$ and by using the graph $L_k=\La+((1k),(2k),\cdots,(k+1,k))$.
\end{proof}

\begin{proof}[Proof of the Theorem]
Fix $t>4.63$.
We prove the theorem by induction on the number of vertices $n$.
It is easy to prove it for $n=1$, $2$, $3$, $4$.
Assume it is true for any $k\leqslant n$ and consider a connected planar graph $\Ga$ with n vertices.
Since $\Ga$ is planar it admits a vertex of degree smaller or equal to 5 by Euler formula.
Therefore, there exists a planar graph $\La$ with $n-1$ vertices such that $\Ga=\La_k$ with $1\leqslant k\leqslant 5$.
First assume that $\La$ is connected.
Then any quotient of $\La$ is also connected and thus satisfied the inequality of the theorem.
We have that $$\| \La_k\|\leqslant \| t-k \| t(t-1)^{n-2} + k t(t-1)^{n-2} + \begin{pmatrix} k\\2\end{pmatrix} t(t-1)^{n-3}+\cdots.$$
We can show that for our $t > 4.63$, each of those polynomials evaluated in $t$ is strictly smaller than $t(t-1)^{n-1}$ when $k=2,3,4,5$.
If $k=1$ then $\|\Ga\|=(t-1)\|\La\|\leqslant (t-1)t(t-1)^{n-2}$.
Moreover, the inequality is strict if $\La$ is not a tree which is equivalent to have that $\Ga$ is not a tree.

Suppose now that $\La$ is not connected.
Since $\Ga$ is connected we have that $k\geqslant 2$ and $\Ga$ is a union of connected graphs $\Ga_1,\cdots,\Ga_j,j\leqslant k$ with one common vertex.
We can prove by induction on $j$ that $ \Ga = ( \frac{ t-1 }{ t } )^{ j-1 } \prod_{i=1}^j \Ga_i  $.
Moreover, $\sum_{i=1}^j n_i = n+j-1$ where $n_i$ is the number of vertices of $\Ga_i$.
Applying the inequality to each $\Ga_i$ we obtain that $\vert\Ga\vert\leqslant t(t-1)^{n-1}$ with equality if each of the $\Ga_i$ is a tree.
Each $\Ga_i$ is a tree implies that $\Ga$ is a tree.
This finishes the proof of the theorem.
\end{proof}

\begin{remark}
We know that the Jones subgroup $\vec F=F_\Omega(2)$ is non-trivial and that $F_\Omega(t)$ is trivial for $t=3,4$ and $t>4.63$. 
Thomassen and Perrett have recently proved that $\Ga(\tau+2)>0$ for all planar graphs $\Ga$ (see e.g.~\cite[Theorem 6.16, p.~100]{P16}) which implies that $F_\Omega(\tau +2)$ is trivial where $\tau=\frac{1 + \sqrt{5}}{2} = 2 \cos(\pi/5)$ is the golden ratio.
It seems very likely that all subgroups $F_\Omega(t)$ are trivial except the first one with $t=2$. However, except for $t=\tau+2$ and $t=3$ we have been unable to rule out the discrete series of $t=4\cos(\pi/k)^2, k\geq 5.$
The next Jones index after $2$ is $\tau^2=\tau+1$ and then $3$. 
Unfortunately, we cannot apply the argument for $\tau+2$ to the value $\tau+1$ as there are elements $g \in F$ with an edge $e$ of $\Gamma(g)$ such that $\Gamma(g)/e$ is the complete 4-graph $K_4$, and hence $(\Gamma(g)/e)(\tau^2) =\tau^2(\tau^2-1)(\tau^2-2)(\tau^2-3)< 0$. 
An example of this sort is given by
$$
\begin{tikzpicture}[baseline = -.1cm]
\draw (1,0) to [out=90,in=90] (2,0);
\draw (1,0) to [out=90,in=90] (3,0);
\draw (1,0) to [out=90,in=90] (5,0);
\draw (3,0) to [out=90,in=90] (4,0);
\draw (1,0) to [out=-90,in=-90] (2,0);
\draw (2,0) to [out=-90,in=-90] (3,0);
\draw (2,0) to [out=-90,in=-90] (4,0);
\draw (4,0) to [out=-90,in=-90] (5,0);
\node at (4.5,-0.5) {{\scriptsize $e$}};
\node at (-0.05,0) {$\Gamma(g)=$};
\end{tikzpicture}
$$
\end{remark}

\bibliographystyle{plain}

\end{document}